\definecolor{light-gray}{gray}{0.95}
\numberwithin{equation}{section}
\newcommand{\R}{\mathbb{R}}
\newcommand{\E}{\mathbb{E}}
\newcommand{\ut}{u^\dagger}
\newcommand{\calK}{\mathcal{K}}
\DeclareMathOperator{\Var}{Var}
\DeclareMathOperator{\bigo}{O}
\DeclareMathOperator{\Tr}{Tr}
\DeclareMathOperator{\bias}{bias}
\DeclareMathOperator{\var}{var}
\DeclareMathOperator{\Ran}{Ran}
\DeclareMathOperator{\Dom}{Dom}
\DeclareMathOperator{\iid}{i.i.d.}
\newcommand{\abs}[1]{\left |#1\right |}
\newcommand{\norm}[1]{\left \|#1\right\|}
\newcommand{\paren}[1]{\left(#1\right)}
\newcommand{\bracket}[1]{\left[#1\right]}
\newcommand{\set}[1]{\left\{#1\right\}}
\newcommand{\inner}[2]{\left\langle #1,#2\right\rangle}
\newcommand{\eps}{\epsilon}
\newtheorem{thm}{Theorem}[section]
\newtheorem{lem}[thm]{Lemma}
\newtheorem{cor}[thm]{Corollary}
\newtheorem{prop}[thm]{Proposition}
\newtheorem{rem}[thm]{Remark}
\newtheorem{assume}{Assumption}[]
\newenvironment{assumep}[1]{
  
  \assumealt
}{\endassumealt}
\theoremstyle{definition}
\title[Averaging and Filtering]{Iterate Averaging, the Kalman Filter, and 3DVAR for Linear Inverse Problems}
\author[Jones]{Felix G. Jones}
\author[Simpson]{Gideon Simpson}
\email{grs53@drexel.edu}
\address{Department of Mathematics, Drexel University, Philadelphia, PA, 19103, USA}
\date{\today}
\subjclass[2010]{93E11, 65J22, 47A52}
\keywords{Kalman filter, 3DVAR, statistical inverse problems, averaging}
\begin{document}

\maketitle

\begin{abstract}
    It has been proposed that classical filtering methods, like the Kalman filter and 3DVAR, can be used to solve linear statistical inverse problems.  In the work of Iglesias, Lin, Lu, \& Stuart (2017), \cite{Iglesias_Lin_Lu_Stuart_2017}, error estimates were obtained for this approach.  By optimally tuning a regularization parameter in the filters, the authors were able to show that the mean squared error could be systematically reduced.

    Building on the aforementioned work of Iglesias, Lin, Lu, \& Stuart, we prove that by (i) considering the problem in a weaker norm and (ii) applying simple iterate averaging of the filter output, 3DVAR will converge in mean square, unconditionally on the choice of parameter.  Without iterate averaging, 3DVAR cannot converge by running additional iterations with a  fixed choice of parameter.  We also establish that the Kalman filter's performance in this setting cannot be improved through iterate averaging.  We illustrate our results with numerical experiments that suggest our convergence rates are sharp.
\end{abstract}


\section{Introduction}

The focus of this work is on the inverse problem
\begin{equation}
\label{e:invproblem}
y = Au^\dagger + \eta,
\end{equation}
where, given the noisy observation $y$ of $A u^\dagger$, we wish to infer $u^\dagger$.  In our setting, $A:X\to Y$ is a compact operator between separable Hilbert spaces and $\eta \sim N(0, \gamma^2 I)$ is white noise, modelling measurement error.  This problem is well-known to be ill-posed in the infinite dimensional setting, as $A$ has an unbounded inverse.  Methods of solution include the use of regularized Moore-Penrose inverses and, subject to the introduction of a prior, Bayesian formulations,  \cite{Pereverzev, Heinz, ghanem_handbook_2017, sullivan2015introduction, stuart_inverse_2010, knapik2011bayesian,Ghosal_vanderVaart_2017}.

In  \cite{Iglesias_Lin_Lu_Stuart_2017}, a key inspiration for the present work, Iglesias, Lin, Lu, \& Stuart considered two classical
filtering algorithms, the Kalman filter and 3DVAR, with the goal of using them to solve
\eqref{e:invproblem}. The filtering methodology for \eqref{e:invproblem} requires the introduction, conceptually, of the artificial dynamical system
\begin{subequations}
\label{e:artificial1}
\begin{align}
    u_n &= u_{n-1},\quad u_0 = u^\dagger,\\
    y_n & = Au_n + \eta_n, \quad \eta_n \overset{\iid}{\sim} N(0, \gamma^2 I).
\end{align}
\end{subequations}
Here, at algorithmic time step $n$, $u_n$ is the quantity of interest, and $y_n$ is the noisy observation.  Having ascribed a notion of time to the problem, we can then apply a filter.  This provides a mechanism for estimating  $u^\dagger$  in \eqref{e:invproblem} in an online setting, where a sequence of i.i.d. observations, $\{y_n\}$, is available.  This corresponds to ``Data Model 1'' of \cite{Iglesias_Lin_Lu_Stuart_2017}.

Amongst the key results of \cite{Iglesias_Lin_Lu_Stuart_2017}, reviewed in detail below, is that under sufficiently strong assumptions, the Kalman filter will recover the truth in mean square, unconditionally on the choice of the scalar regularization parameter.  Under somewhat weaker assumptions, the error will only be bounded, though through minimax selection of a scalar parameter, an optimal error can be achieved for a given number of iterations, allowing the error to be driven to zero.

3DVAR is a simplification of Kalman that is demonstrated to have, at best, bounded error, though, again, through minimax parameter tuning, it can perform comparably to Kalman.   Kalman is more expensive than 3DVAR, as it requires updating an entire covariance operator at each iteration. For finite dimensional approximations, this may require costly matrix-matrix multiplications at each iterate.  

Here, by working in a weaker norm and averaging the iterates, we are able to establish that 3DVAR will unconditionally converge in mean square for all admissible filter parameters.  Such weaker convergence was also considered in \cite{ding2018weak}, for a related problem on 4DVAR.  Further, we show that this simple  iterate averaging {\it cannot} improve the performance of the Kalman filter.

\subsection{Filtering Algorithms}

The Kalman filter is a probabilistic filter that estimates a Gaussian distribution, $N(m_n, C_n)$, for $\ut$ at each iterate.  Given a starting  mean and covariance, $m_0$ and $C_0$, the updates are as follows:
\begin{subequations}
~\label{e:Kalman}
\begin{align}
    m_{n} & = K_n y_n + (I-K_n A)m_{n-1},\\
    C_n &= (I - K_n A) C_{n-1},\\
    K_n & = C_{n-1} A^\ast (A C_{n-1}A^\ast + \gamma^2 I)^{-1}.
 \end{align}
\end{subequations}
Here, $K_n$ is the so-called ``Kalman gain.''  $m_n$ is a point estimate of $\ut$.

While Kalman is a probabilistic filter, 3DVAR is not. It is obtained by applying Kalman with a static covariance operator $C_n = \frac{\gamma^2}{\alpha} \Sigma$ for some predetermined operator $\Sigma$: 
\begin{subequations}
\label{e:3DVAR}
\begin{align}
    u_{n} &= \calK y_n + (I - \calK A)u_{n-1},\\
    \calK &= (A^\ast A + \alpha \Sigma^{-1})^{-1}A^\ast.
\end{align}
\end{subequations}
  We refer the reader to \cite{law_data_2015,sullivan2015introduction,shumway_time_2011, humpherys_fresh_2012}, and references therein, for a thorough discussion and analysis of these classical filtering methods and their extensions.

 Indeed, several important extensions of these classical methods that have appeared in the literature have also been directly applied to statistical inverse problems like \eqref{e:invproblem}, along with its nonlinear variation, $y = \mathcal{G}(u^\dagger) + \eta$.   In particular, the ensemble Kalman filter (EnKF), using an ensemble of replicas of the problem,  has been successfully applied to solve such problems in \cite{iglesias2013ensemble,iglesias2016}.  See, for instance, \cite{law_data_2015, schillings2017a,schillings2017b}, for additional details and analysis of EnKF.  We also mention \cite{ding2018weak}, which uses similar ideas with 4DVAR.
 
 Continuous in time analogs of these methods and problems also exist, resulting in the Kalman-Bucy filter and continuous in time 3DVAR, \cite{law_data_2015, oksendal2003stochastic,sullivan2015introduction}.  In \cite{lu2021asymptotical}, these were used to solve the continuous in time analog of \eqref{e:artificial1}
\begin{subequations}
    \label{e:continv}
 \begin{align}
 du &= 0,\\
 dy & = Au dt + d\eta,
 \end{align}
\end{subequations}
where $\eta(t)$ is now a Weiner process in the appropriate function space, \cite{da2014stochastic}.

\subsection{Key Assumptions and Prior Results}

In \cite{Iglesias_Lin_Lu_Stuart_2017}, the following assumptions were invoked.
\begin{assume}
\label{a1}
~
\begin{enumerate}
    \item $C_0=\frac{\gamma^2}{\alpha}\Sigma$ with $\Ran(\Sigma^{\frac{1}{2}})\subset \Dom(A)$, $\alpha>0$, and $\Sigma$ a self-adjoint positive definite trace class operator with $\Sigma^{-1}$ densely defined. 
    
    \item $\Sigma$ induces a Hilbert scale, and there exist constants  $C> 1$, $\nu>0$ such that $A$ induces an equivalent norm:
    \begin{equation}
    \label{e:Hscale}
        C^{-1}\|x\|_\nu\leq \|Ax \|\leq C \|x\|_\nu, \quad \|\bullet\|_\nu = \|\Sigma^{\frac{\nu}{2}} \bullet\|.
    \end{equation}

    \item The initial error is sufficiently ``smooth,''
    \begin{equation}
    \label{e:datacond}
     m_0 -\ut \in  \Dom(\Sigma^{-\frac{s}{2}}), \quad 0\leq s \leq \nu + 2,
    \end{equation}
    where we replace $m_0$ with $u_0$ in the case of 3DVAR in the above expression.
    
\end{enumerate}
\end{assume}
Under this first set of assumptions, Iglesias et al. established
\begin{thm}[Theorem 4.1 of \cite{Iglesias_Lin_Lu_Stuart_2017}]
\label{t:Iglesias41}
The Kalman filter admits the  mean square error bound
\begin{equation*}
    \E[\|m_n - \ut\|^2]\lesssim \paren{\frac{n}{\alpha}}^{-\frac{s}{\nu + 1}} + \frac{\gamma^2}{\alpha}\Tr\Sigma
\end{equation*}
\end{thm}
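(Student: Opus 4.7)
The plan is to analyze the error $e_n := m_n - \ut$ via the recursive update, perform a bias--variance decomposition, and then estimate each piece using the Hilbert scale in Assumption~\ref{a1}. First I would subtract $\ut$ from \eqref{e:Kalman} and use $y_n = A\ut + \eta_n$ to get the error recursion
\begin{equation*}
e_n = (I - K_n A)e_{n-1} + K_n \eta_n.
\end{equation*}

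Next I would assemble two standard Kalman identities, both consequences of \eqref{e:Kalman}: the precision accumulates linearly,
\begin{equation*}
C_n^{-1} = C_0^{-1} + \frac{n}{\gamma^2}A^\ast A = \frac{\alpha}{\gamma^2}\Sigma^{-1} + \frac{n}{\gamma^2}A^\ast A,
\end{equation*}
and the gain factors as $K_n = \gamma^{-2} C_n A^\ast$, so that $I - K_n A = C_n C_{n-1}^{-1}$. Unrolling the recursion, the intermediate $C_j C_{j-1}^{-1}$ factors telescope and I obtain the closed form
\begin{equation*}
e_n = C_n C_0^{-1} e_0 + \frac{1}{\gamma^2}\sum_{k=1}^n C_n A^\ast \eta_k.
\end{equation*}
Because $e_0$ is deterministic and $\{\eta_k\}$ are iid mean-zero, cross terms vanish and
\begin{equation*}
\E\|e_n\|^2 = \underbrace{\|C_n C_0^{-1} e_0\|^2}_{\text{bias}^2} + \underbrace{\tfrac{n}{\gamma^2}\Tr\!\bigl(A C_n^2 A^\ast\bigr)}_{\text{variance}}.
\end{equation*}

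For the bias, I would write $C_n C_0^{-1} e_0 = \tfrac{\alpha}{\gamma^2}\, C_n \Sigma^{-1+s/2}\bigl(\Sigma^{-s/2} e_0\bigr)$, which is legitimate by \eqref{e:datacond}. Viewing $C_n$ through its ``symbol'' $\gamma^2\sigma/(\alpha + n\sigma^{\nu+1})$ via the Hilbert scale \eqref{e:Hscale}, the operator norm of $C_n \Sigma^{-1+s/2}$ scales like $\sup_{\sigma > 0} \sigma^{s/2}/(\alpha + n\sigma^{\nu+1})$, optimized at $\sigma \sim (\alpha/n)^{1/(\nu+1)}$, giving bias${}^2 \lesssim (n/\alpha)^{-s/(\nu+1)}$. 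For the variance, $\Tr(A C_n^2 A^\ast) = \Tr(A^\ast A\, C_n^2)$, and an AM--GM bound $(\alpha + n\sigma^{\nu+1})^2 \ge 4\alpha n \sigma^{\nu+1}$ applied eigenvalue-by-eigenvalue collapses each contribution to $\gamma^4 \sigma_k/(4\alpha)$. Summing over the eigenvalues of $\Sigma$ (which is trace class) yields variance $\lesssim \tfrac{\gamma^2}{\alpha}\Tr\Sigma$. Adding the two bounds gives the claim.

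The main obstacle is that \eqref{e:Hscale} furnishes only a two-sided norm equivalence $A^\ast A \asymp \Sigma^\nu$ in the sense of quadratic forms, not operator identity, so $A^\ast A$ and $\Sigma$ need not commute and the clean ``symbol calculus'' used informally above must be justified rigorously. This is where I would invoke the Heinz inequality (or equivalent operator monotonicity / interpolation results) to pass from norm equivalence of $A$ and $\Sigma^{\nu/2}$ to comparisons of $(\alpha \Sigma^{-1} + n A^\ast A)^{-1}$ with its ``diagonal'' surrogate, absorbing the resulting multiplicative constants into the $\lesssim$ and taking care that they do not depend on $n$, $\alpha$, or the spectrum of $\Sigma$.
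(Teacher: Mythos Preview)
This statement is not proved in the present paper at all: it is quoted verbatim as Theorem~4.1 of \cite{Iglesias_Lin_Lu_Stuart_2017} under ``Key Assumptions and Prior Results,'' so there is no in-paper proof to compare your proposal against.

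That said, your outline is correct and is exactly the shape the original argument takes. The bias--variance split, the telescoping $I-K_nA = C_nC_{n-1}^{-1}$, the closed form $e_n = C_nC_0^{-1}e_0 + \gamma^{-2}\sum_k C_n A^\ast\eta_k$, and the AM--GM bound on the variance are all right; these identities are even recorded here as Lemma~\ref{l:KalmanIdent}.

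The one place where your write-up is vaguer than it needs to be is the ``Heinz inequality'' step. The paper (and the original reference) does not invoke Heinz directly; instead it conjugates by $\Sigma^{1/2}$. Writing $B=A\Sigma^{1/2}$ as in \eqref{e:Bdef}, one has
\[
\Bigl(\tfrac{\alpha}{n}\Sigma^{-1}+A^\ast A\Bigr)^{-1}=\Sigma^{1/2}\Bigl(\tfrac{\alpha}{n}I+B^\ast B\Bigr)^{-1}\Sigma^{1/2},
\]
so the bias becomes $\Sigma^{1/2}r_{1,\alpha/n}(B^\ast B)\Sigma^{-1/2}e_0$ and the variance involves $\Sigma^{1/2}(\tfrac{\alpha}{n}I+B^\ast B)^{-1}B^\ast$. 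Now $B^\ast B$ is a single self-adjoint operator, so ordinary spectral calculus applies; the passage between powers of $\Sigma$ and powers of $B^\ast B$ is handled by Proposition~\ref{p:hilbertscale} (which is itself the Heinz-type interpolation result, stated here as a black box from \cite{Iglesias_Lin_Lu_Stuart_2017}). With that substitution your ``symbol calculus'' is rigorous line for line, and the bias bound follows from Lemma~\ref{l:rbounds} with $p=\tfrac{s}{2(1+\nu)}$ exactly as you sketched. This is the same device the paper uses in the proofs of Propositions~\ref{p:generalbias} and~\ref{p:generalvariance} for the 3DVAR analogue.
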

and
\begin{thm}[Theorem 5.1 of \cite{Iglesias_Lin_Lu_Stuart_2017}]
\label{t:Iglesias51}
3DVAR admits the mean square error bound
\begin{equation*}
    \E[\|u_n - \ut\|^2]\lesssim \paren{\frac{n}{\alpha}}^{-\frac{s}{\nu + 1}} + \frac{\gamma^2}{\alpha}\Tr\Sigma \log n.
\end{equation*}
\end{thm}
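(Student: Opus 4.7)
The plan is to perform a bias--variance decomposition of the error $e_n := u_n - u^\dagger$ and estimate each piece using spectral calculus in the Hilbert scale induced by $\Sigma$. Substituting $y_n = Au^\dagger + \eta_n$ into the 3DVAR update \eqref{e:3DVAR} produces the closed-form error recursion
\begin{equation*}
    e_n = T e_{n-1} + \calK\eta_n, \qquad T := I - \calK A = \alpha(A^\ast A + \alpha\Sigma^{-1})^{-1}\Sigma^{-1},
\end{equation*}
so that unrolling and invoking the independence and mean-zero property of $\{\eta_k\}$ yields
\begin{equation*}
    \E\|e_n\|^2 = \|T^n e_0\|^2 + \gamma^2 \sum_{k=0}^{n-1} \Tr\bigl[T^k \calK\calK^\ast (T^\ast)^k\bigr].
\end{equation*}

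For the bias, I would exploit the similarity transform $\tilde T := \Sigma^{-1/2} T \Sigma^{1/2} = \alpha(B+\alpha I)^{-1}$, where $B := \Sigma^{1/2} A^\ast A \Sigma^{1/2}$ is self-adjoint and, by Assumption \ref{a1}(2), spectrally equivalent to $\Sigma^{\nu+1}$. Writing $T^n e_0 = \Sigma^{1/2}\tilde T^n \Sigma^{-1/2} e_0$ and invoking the source condition \eqref{e:datacond}, the estimate reduces to the scalar filter-function bound
\begin{equation*}
    \sup_{\lambda\geq 0}\paren{\frac{\alpha}{\lambda+\alpha}}^{2n}\lambda^{s/(\nu+1)}\lesssim \paren{\frac{\alpha}{n}}^{s/(\nu+1)},
\end{equation*}
which delivers $\|T^n e_0\|^2\lesssim (n/\alpha)^{-s/(\nu+1)}$, the announced bias rate.

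For the variance I would start from the operator inequality $\calK\calK^\ast \leq (A^\ast A + \alpha\Sigma^{-1})^{-1} \leq \Sigma/\alpha$ (obtained by discarding $A^\ast A$ inside the inverse and recognizing the resulting monotone relation) and use trace monotonicity under conjugation to bound the variance sum by $(\gamma^2/\alpha)\sum_{k=0}^{n-1}\Tr[T^k \Sigma (T^\ast)^k]$. Passing to the $\tilde T$-picture renders the inner sum diagonal, giving $\sum_j\sigma_j (1-t_j^{2n})/(1-t_j^2)$ with $t_j\asymp \alpha/(\sigma_j^{\nu+1}+\alpha)$. Applying $\sum_{k=0}^{n-1}t^{2k}\leq \min(n, (1-t^2)^{-1})$ and splitting the modes into coarse ($\sigma_j^{\nu+1}\gtrsim \alpha$), transition ($\alpha/n\lesssim\sigma_j^{\nu+1}\lesssim \alpha$), and fine ($\sigma_j^{\nu+1}\lesssim \alpha/n$) bands should produce the bound $(\Tr\Sigma/\alpha)\log n$, with the $\log n$ arising from summing inverse-spacing contributions across the transition band while the $\Tr\Sigma/\alpha$ prefactor is controlled on the other two bands.

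The principal obstacle is the variance bound. Because Assumption \ref{a1}(2) furnishes only equivalence of quadratic forms rather than simultaneous diagonalizability of $A^\ast A$ and $\Sigma$, every spectral identity used above must be realized through operator inequalities, with care that the resulting equivalence constants do not compound across the $n$ iterates when composed under $T^k$ and $(T^\ast)^k$. A secondary difficulty is that $T$ is only self-adjoint in the $\Sigma^{-1}$-weighted inner product, so all norm and trace manipulations must be performed consistently after conjugating by $\Sigma^{\pm 1/2}$. Once the bias and variance bounds are in place, summing them yields the stated mean-square error estimate.
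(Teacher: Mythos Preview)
This theorem is not proved in the present paper at all: it is quoted as Theorem~5.1 of Iglesias, Lin, Lu \& Stuart (2017) in the subsection ``Key Assumptions and Prior Results,'' and the authors give no argument of their own for it. There is therefore no proof here against which to compare your attempt; the paper only uses the same spectral-calculus machinery (the functions $r_{n,\alpha}$, $q_{n,\alpha}$ and Proposition~\ref{p:hilbertscale}) to establish its \emph{averaged} results, Theorems~\ref{t:msegeneral1} and~\ref{t:msediagonal1}.

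On the substance of your sketch: the bias portion is correct and matches the mechanism used throughout this paper and the cited reference---conjugate by $\Sigma^{\pm 1/2}$, use the range identity of Proposition~\ref{p:hilbertscale} to convert the source condition into a power of $B^\ast B$, and apply the scalar bound $\lambda^p r_{n,\alpha}(\lambda)\lesssim(\alpha/n)^p$ from Lemma~\ref{l:rbounds}.

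Your variance argument, however, has a real gap. After conjugation the quantity to control is $\sum_k \Tr\bigl[\Sigma\,\tilde T^{2k}\bigr]$, and your assertion that ``passing to the $\tilde T$-picture renders the inner sum diagonal, giving $\sum_j\sigma_j(1-t_j^{2n})/(1-t_j^2)$ with $t_j\asymp\alpha/(\sigma_j^{\nu+1}+\alpha)$'' silently assumes that $\Sigma$ and $B^\ast B=\Sigma^{1/2}A^\ast A\Sigma^{1/2}$ share an eigenbasis. Under Assumption~\ref{a1} they need not: Proposition~\ref{p:hilbertscale} supplies only the norm equivalence $\|(B^\ast B)^{\theta/2}x\|\asymp\|\Sigma^{\theta(1+\nu)/2}x\|$, not commutation or an operator inequality that can be inserted inside a trace. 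Your subsequent coarse/transition/fine band-splitting indexed by $\sigma_j$ is therefore exactly the simultaneous-diagonalization hypothesis of Assumption~\ref{a2}, not Assumption~\ref{a1}. You rightly flag this as the ``principal obstacle,'' but the resolution you outline does not overcome it; what is actually needed is to extract the $\log n$ from $\sum_k \Tr\bigl[\Sigma\,r_{2k,\alpha}(B^\ast B)\bigr]$ using only operator-norm bounds on functions of $B^\ast B$ together with the trace-class property of $\Sigma$, never diagonalizing both operators in a common basis.
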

At fixed values of $\alpha$, Theorems \ref{t:Iglesias41} and \ref{t:Iglesias51} preclude convergence, and, in the case of 3DVAR, the error may even grow.  However, there are two free parameters: the number of iterations $n$ and the regularization parameter $\alpha$.  Indeed,  within a Bayesian framework, $\alpha$ can be interpreted as the strength of a prior relative to a likelihood.  For a fixed number of iterations, $n$, $\alpha$ can be tuned to minimize the error.  Indeed, the error can be made arbitrarily small by selecting a sufficiently large $n$ with the optimal $\alpha$.

However, in both Theorems \ref{t:Iglesias41} and \ref{t:Iglesias51}, there is an unknown constant.  If the error at the given, optimal choice of $\alpha$ for a given $n$ is inadequate, one must obtain additional data, update the value of $\alpha$, and rerun the algorithm.  A benefit of the present work is that, by using iterate averaging, the error of 3DVAR can always be reduced with additional iterates, without necessarily altering $\alpha$ and discarding previously computed iterations.  We will revisit the minimax estimates under a simultaneous diagonalization assumption.

Indeed, stronger results were obtained in \cite{Iglesias_Lin_Lu_Stuart_2017} subject to the simultaneous diagonalization assumption:
\begin{assume}
\label{a2}
~
\begin{enumerate}
    \item
$\Sigma$ and $A^\ast A$ simultaneously diagonalize against the set  $\{\varphi_i\}$ with respective eigenvalues $\sigma_i$ and $a_i^2$, and these eigenvalues satisfy
\begin{equation}
\label{e:evals1}
    \sigma_i = i^{-1 - 2\eps}, \quad a_i \asymp i^{-p}, \quad \eps>0, \quad p>0.
\end{equation}

\item $m_0 = 0$ (or $u_0$ in 3DVAR) and $\ut$ satisfies, for $0<\beta \leq 1 + 2\eps + 2p$,
\begin{equation}
\label{e:sobolev1}
    \sum_{i=1}^\infty i^{2\beta}|u^{\dagger}_i|^2 <\infty.
\end{equation}
\end{enumerate}
\end{assume}

With this, Iglesias et al. obtain
\begin{thm}[Theorem 4.2 of \cite{Iglesias_Lin_Lu_Stuart_2017}]
\label{t:Iglesias2}
Under Assumption \ref{a2}, for the Kalman filter,
\begin{equation*}
\E[\|m_n - \ut\|^2]\lesssim  \paren{\frac{n}{\alpha}}^{-\frac{2\beta}{1+2\eps + 2p}} + \gamma^2 n^{-\frac{2\eps}{1 + 2\eps + 2p}}\alpha^{-\frac{1 + 2p}{1 + 2\eps + 2p}}
\end{equation*}
\end{thm}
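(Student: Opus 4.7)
The plan is to exploit the simultaneous diagonalization assumption to decouple the Kalman recursion \eqref{e:Kalman} into a family of independent one-dimensional scalar Kalman filters, one per eigenmode $\varphi_i$. Because $\Sigma$ and $A^\ast A$ share the basis $\{\varphi_i\}$, and $C_0=\tfrac{\gamma^2}{\alpha}\Sigma$ is diagonal in this basis, an induction argument using the recursions for $K_n$ and $C_n$ shows that $C_n$ remains diagonal with entries $c_{n,i}$ satisfying the precision update $c_{n,i}^{-1}=c_{n-1,i}^{-1}+a_i^2/\gamma^2$. Hence
\[
c_{n,i}^{-1}=\frac{\alpha}{\gamma^2\sigma_i}+\frac{na_i^2}{\gamma^2},
\qquad
m_{n,i}=\frac{(a_i/\gamma^2)\sum_{k=1}^n y_{k,i}}{c_{0,i}^{-1}+na_i^2/\gamma^2},
\]
starting from $m_{0,i}=0$. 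Substituting $y_{k,i}=a_i u_i^\dagger+\eta_{k,i}$ and taking expectations gives the per-mode bias/variance split
\[
\E[(m_{n,i}-u_i^\dagger)^2]=\frac{\lambda_i^2(u_i^\dagger)^2}{(\lambda_i+t_i)^2}+\frac{t_i}{(\lambda_i+t_i)^2},
\qquad
\lambda_i:=\frac{\alpha}{\gamma^2\sigma_i},\quad t_i:=\frac{na_i^2}{\gamma^2}.
\]

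The proof then reduces to two scalar sums. Under \eqref{e:evals1}, $\lambda_i\asymp(\alpha/\gamma^2)i^{1+2\eps}$ and $t_i\asymp(n/\gamma^2)i^{-2p}$, so $\lambda_i$ and $t_i$ balance at the critical scale
\[
i_\star\asymp\paren{n/\alpha}^{1/q},\qquad q:=1+2\eps+2p.
\]
For the bias term $B_n=\sum_i(u_i^\dagger)^2\,(1+t_i/\lambda_i)^{-2}$, I would use $t_i/\lambda_i\asymp(n/\alpha)i^{-q}$ and factor out $i^{-2\beta}$ against the Sobolev-type weight in \eqref{e:sobolev1}:
\[
B_n\leq\paren{\sup_{i\geq 1}\frac{i^{-2\beta}}{(1+(n/\alpha)i^{-q})^2}}\sum_j j^{2\beta}(u_j^\dagger)^2.
\]
The supremum is attained near $i=i_\star$ (the trade-off between the numerator's decay and the denominator's crossover from $\asymp 1$ to $\asymp (n/\alpha)^2 i^{-2q}$), and the range $0<\beta\leq q$ ensures the optimization is interior or boundary-saturated at the right rate, yielding $B_n\lesssim(n/\alpha)^{-2\beta/q}$, the first term of the theorem.

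For the variance term $V_n=\sum_i t_i/(\lambda_i+t_i)^2$, I would split at $i_\star$ and use the trivial bound $(\lambda_i+t_i)^2\geq\max(\lambda_i,t_i)^2$. On $i\leq i_\star$ the summand is $\lesssim 1/t_i\asymp(\gamma^2/n)i^{2p}$, so the low-frequency contribution is $\lesssim (\gamma^2/n)\,i_\star^{2p+1}=\gamma^2 n^{-2\eps/q}\alpha^{-(1+2p)/q}$. On $i>i_\star$ the summand is $\lesssim t_i/\lambda_i^2\asymp(n\gamma^2/\alpha^2)i^{-2p-2-4\eps}$; since $p,\eps>0$ the tail converges with comparable bound $\lesssim(n\gamma^2/\alpha^2)\,i_\star^{-(2p+1+4\eps)}$, which a direct exponent computation shows equals the same $\gamma^2 n^{-2\eps/q}\alpha^{-(1+2p)/q}$. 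Adding the two regimes gives the second term of the theorem. I anticipate the main technical care is in (i) carrying the $\asymp$ constants for $a_i$ through the two regimes uniformly, and (ii) verifying that the optimization over $i$ in the bias step is genuinely dominated by the balance point $i_\star$ across the admissible range of $\beta$; both are elementary once the decoupling and the dimensionless parameter $(n/\alpha)$ are identified.
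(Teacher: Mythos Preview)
Your argument is mathematically sound: the diagonalization via Lemma~\ref{l:KalmanIdent} correctly reduces the problem to scalar Kalman filters, your per-mode bias/variance split is accurate, and both the supremum argument for $B_n$ (which uses the restriction $0<\beta\leq q$ exactly where needed) and the $i_\star$-splitting for $V_n$ yield the stated rates after routine exponent bookkeeping.

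However, there is nothing to compare against: this theorem is not proved in the present paper. It is quoted verbatim as Theorem~4.2 of \cite{Iglesias_Lin_Lu_Stuart_2017} in the review of prior results (Section~1.2), and the paper supplies no proof of its own. The paper's original contributions concern iterate-averaged 3DVAR (Theorems~\ref{t:msegeneral1} and \ref{t:msediagonal1}) and the negative results for Kalman averaging (Theorem~\ref{t:kalmaniter}); the Kalman error bound under Assumption~\ref{a2} is simply cited. That said, your approach is the natural one and is entirely consistent with the machinery the paper does develop: the closed-form Kalman identities of Lemma~\ref{l:KalmanIdent}, the mode-wise bias/variance decomposition, and the balance-scale estimate at $i_\star\asymp(n/\alpha)^{1/q}$ all mirror the structure of the paper's own diagonal 3DVAR analysis in Section~\ref{s:diagonal}.
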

and
\begin{thm}[Theorem 5.2 of \cite{Iglesias_Lin_Lu_Stuart_2017}]
\label{t:Iglesias6}
Under Assumption \ref{a2}, for 3DVAR,
\begin{equation*}
    \E[\|u_n - \ut\|^2]\lesssim \paren{\frac{n}{\alpha}}^{-\frac{2\beta}{1+2\eps + 2p}} + C\gamma^2\alpha^{-\frac{1 + 2p}{1 + 2\eps + 2p}}.
\end{equation*}
\end{thm}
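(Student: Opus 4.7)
The plan is to exploit the simultaneous diagonalization in Assumption~\ref{a2} to decouple the 3DVAR recursion into independent scalar recursions indexed by eigenmode, solve each one explicitly, and then bound the resulting bias and variance sums separately.

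Writing the 3DVAR update in error form gives $u_n - \ut = (I-\calK A)(u_{n-1}-\ut) + \calK\eta_n$. Let $A\varphi_i = a_i\psi_i$ be the SVD of $A$ compatible with the joint eigenbasis of $\Sigma$ and $A^\ast A$. Both $I-\calK A$ and $\calK$ then reduce to scalars on $\{\varphi_i\}$: $(I-\calK A)\varphi_i = \lambda_i \varphi_i$ with $\lambda_i := \alpha/(\alpha + a_i^2\sigma_i)$, and the $i$th coordinate of $\calK\eta_n$ is $((1-\lambda_i)/a_i)\,\eta_n^{(i)}$ where $\eta_n^{(i)} := \inner{\eta_n}{\psi_i} \sim N(0,\gamma^2)$, independent across $n$ and $i$. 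Starting from $u_0 = 0$, iterating and taking expectations yields the bias-variance decomposition
\begin{equation*}
\E[\|u_n-\ut\|^2] \;\leq\; \sum_i \lambda_i^{2n}(\ut_i)^2 \;+\; \gamma^2 \sum_i \frac{\sigma_i}{(1+\lambda_i)(\alpha + a_i^2\sigma_i)}.
\end{equation*}

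For the bias term I would write $\lambda_i^{2n}(\ut_i)^2 = (\lambda_i^{2n} i^{-2\beta})\cdot (i^{2\beta}(\ut_i)^2)$, pull out $\sup_i \lambda_i^{2n}i^{-2\beta}$, and absorb the remaining factor via the summability of $i^{2\beta}(\ut_i)^2$ from \eqref{e:sobolev1}. Treating $\mu = a_i^2\sigma_i \asymp i^{-(1+2\eps+2p)}$ as a continuous parameter and setting $r := 2\beta/(1+2\eps+2p)$, this supremum is dominated by $\sup_{\mu>0}\paren{\alpha/(\alpha+\mu)}^{2n}\mu^{r}$, whose maximizer is $\mu^\ast = r\alpha/(2n-r)$ and whose value $\paren{1-r/(2n)}^{2n}\paren{r\alpha/(2n-r)}^{r}$ is $\lesssim (n/\alpha)^{-r}$ uniformly in $n\geq r$, producing the first advertised term.

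For the variance term, I would use $(1+\lambda_i)\geq 1$ and split the series at the crossover index $i^\ast \asymp \alpha^{-1/(1+2\eps+2p)}$, where $\alpha \asymp a_i^2\sigma_i$. For $i\leq i^\ast$ the denominator is controlled by $a_i^2\sigma_i$, giving summands of order $i^{2p}$ and a partial sum of order $(i^\ast)^{1+2p} = \alpha^{-(1+2p)/(1+2\eps+2p)}$; for $i > i^\ast$ the denominator is controlled by $\alpha$, giving summands of order $i^{-1-2\eps}/\alpha$ and a partial sum of the same order $(i^\ast)^{-2\eps}/\alpha$, and combining gives the second advertised term. The main technical obstacle lies in the bias step: one must check that the continuous optimum over $\mu$ actually dominates the discrete supremum over $i\in\N$, handling the low-index edge behavior and the implicit constants in $a_i^2 \asymp i^{-2p}$, with a bound whose prefactor is independent of both $n$ and $\alpha$ so that the claimed scaling is genuine.
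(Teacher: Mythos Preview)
Your argument is correct. Note, however, that this theorem is not proved in the present paper: it is quoted verbatim as Theorem~5.2 of \cite{Iglesias_Lin_Lu_Stuart_2017} and serves only as background, so there is no in-paper proof to compare against.

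That said, your approach is entirely in the spirit of the spectral-calculus machinery the paper does develop for its own results on \emph{averaged} 3DVAR (Propositions~\ref{p:diagbias} and~\ref{p:diagvar}). The paper packages the contraction factors into $r_{n,\alpha}(\lambda)=(\alpha/(\alpha+\lambda))^n$ and invokes Lemma~\ref{l:rbounds} to get $\lambda^p r_{n,\alpha}(\lambda)\leq (\alpha p/n)^p$, which is precisely your continuous optimization over $\mu$ recast abstractly; your direct calculus computation of $\mu^\ast$ and the value $(1-r/(2n))^{2n}(r\alpha/(2n-r))^r$ is the same bound unpacked. For the variance, the paper's analogous step (in the averaged setting) also estimates a trace by comparison to $\sum_i(\sigma_i a_i^2)^{\text{power}}$, whereas you give the more hands-on dyadic split at the crossover index $i^\ast\asymp\alpha^{-1/(1+2\eps+2p)}$; both yield the same $\alpha^{-(1+2p)/(1+2\eps+2p)}$ scaling. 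Your concern about the discrete-versus-continuous supremum is not really an obstacle: since $a_i^2\sigma_i\asymp i^{-(1+2\eps+2p)}$ with two-sided constants, $i^{-2\beta}\lesssim (a_i^2\sigma_i)^r$ uniformly, and the continuous supremum over $\mu>0$ trivially dominates each discrete value, so only an $n$-independent multiplicative constant is incurred.
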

Now the Kalman filter will converge at any choice of parameter, while 3DVAR has at worst a bounded error.  Again, $\alpha$ can be tuned so as to obtain a minimax convergence rate.    Indeed, in the setting where one has a fixed number of $n$ samples, at the optimal value of $\alpha$, Theorems \ref{t:Iglesias2} and \ref{t:Iglesias6} lead to the estimates (also found in \cite{Iglesias_Lin_Lu_Stuart_2017}):
\begin{align}
\label{e:kalmanminimax}
     \E[\|m_n - \ut\|^2]\lesssim & n^{-\frac{2\beta}{1+2\beta + 2p}},\\
\label{e:3dvarminimax}     
     \E[\|u_n - \ut\|^2]\lesssim & n^{-\frac{2\beta}{1 + 2\beta + 2p + 2\eps}}\log n,
\end{align}
where the first expression is for Kalman and the second is for 3DVAR.  Similar expressions are also available in the general case for Theorems \ref{t:Iglesias41} and \ref{t:Iglesias51}.

Thus far, we have discussed the study of problem \eqref{e:invproblem} in a sequential setting, where the data, $\{y_n\}$, is assimilated one sample at a time.  In some settings, a static, fixed, number of samples, $n$, may be available together.  Instead of \eqref{e:invproblem}, we might then examine 
\begin{equation}
\label{e:batch}
\begin{split}
    \bar{y}_n = A u^\dagger + \bar{\eta}_n, \quad \bar{\eta}_n \sim N(0, \tfrac{\gamma^2}{n} I),\\
    \bar{y}_n = \frac{1}{n}\sum_{k=1}^n y_k, \quad \bar{\eta}_n = \frac{1}{n}\sum_{k=1}^n \eta_k. 
\end{split}
\end{equation}
The variance of the noise has been reduced by a factor of $n$.  This can be solved using a regularized approximation of $A^+$ to obtain $\bar{u}_{n, \alpha}$.  Under suitable assumptions and identifying the optimal $\alpha = \alpha_\star(n)$, one can obtain (see, for instance,  \cite{cavalier2008nonparametric, Pereverzev,mair1996statistical,mathe_optimal_2001,knapik2011bayesian,van_rooij_asymptotic_1996})
\begin{equation}
\label{e:minimax1}
    \E[\|\bar{u}_{n,\alpha_\star(n)} - u^\dagger\|^2] \lesssim n^{-\frac{ 2\beta}{1 + 2p + 2\beta}}
\end{equation}
This precisely corresponds to the minimax solution of Kalman, \eqref{e:kalmanminimax}, while there is a loss for 3DVAR, \eqref{e:3dvarminimax}.  Note that this is only for the $t=0$ norm.  A generalization to the $t <0$ norm is covered in \cite{mair1996statistical} and for $t(1+2\eps)\leq 2 p$ in \cite{mathe_optimal_2001}.  As we are principally interested in the general $t\geq 0$ case, we state and prove our own version of theorem below using a spectral cutoff regularization.

\subsection{Main Results}
The main results of this paper are contained in the following theorems.  

First, we have the elementary result that 3DVAR, without averaging, cannot converge at fixed parameter choices:
\begin{thm}
	\label{t:3dvarnoconverge}
	Under Assumption \ref{a1} in dimension one, if $u_n$  is generated by 3DVAR, then
	\begin{equation*}
	\E[|u_n - \ut|^2]\geq \gamma^2 \mathcal{K}^2.
	\end{equation*}
\end{thm}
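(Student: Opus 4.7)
The plan is to reduce the one-dimensional 3DVAR recursion to a scalar linear stochastic difference equation and then read off the lower bound from the variance of the most recent noise injection. In dimension one, $A$, $\Sigma$, and $\calK$ all degenerate to scalars; writing $a$ for $A$ and $\sigma$ for $\Sigma$, we have the constant gain $\calK = a/(a^2 + \alpha/\sigma)$ and the scalar recursion
\begin{equation*}
    u_n = \calK y_n + (1-\calK a) u_{n-1}, \qquad y_n = a u^\dagger + \eta_n,\ \eta_n \overset{\iid}{\sim} N(0,\gamma^2).
\end{equation*}
Setting $e_n = u_n - \ut$ and $r = 1 - \calK a$, this simplifies to $e_n = r\, e_{n-1} + \calK \eta_n$.

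Iterating the recursion produces the closed form
\begin{equation*}
    e_n = r^n e_0 + \calK \sum_{k=1}^{n} r^{n-k} \eta_k.
\end{equation*}
Because $e_0$ is deterministic and the $\eta_k$ are independent Gaussians of variance $\gamma^2$, taking expectation of the square gives
\begin{equation*}
    \E[|e_n|^2] = r^{2n}|e_0|^2 + \calK^2 \gamma^2 \sum_{k=1}^{n} r^{2(n-k)}.
\end{equation*}
The first term is nonnegative, so the bound reduces to a statement about the geometric sum.

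The final step is the trivial observation that the geometric sum $\sum_{k=1}^n r^{2(n-k)} = 1 + r^2 + \cdots + r^{2(n-1)}$ contains the $k = n$ term, which equals $1$, and all other terms are nonnegative. Hence $\sum_{k=1}^n r^{2(n-k)} \geq 1$ for every $n \geq 1$, and therefore $\E[|e_n|^2] \geq \calK^2 \gamma^2$, which is the claim. No step is really an obstacle here; the purpose of the result is to highlight, by the simplest possible mechanism, that each 3DVAR update injects a fresh $\calK \eta_n$ whose variance $\calK^2 \gamma^2$ the iteration has no way of averaging out at fixed $\alpha$. This is exactly the obstruction that the iterate-averaging construction of the main theorems is designed to overcome.
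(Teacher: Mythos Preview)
Your proof is correct and is essentially the paper's argument, just with one extra unrolling step: the paper stops at the one-step decomposition $e_n = r\,e_{n-1} + \calK\eta_n$, uses independence of $\eta_n$ from $e_{n-1}$ to write $\E[|e_n|^2] = r^2\E[|e_{n-1}|^2] + \calK^2\gamma^2 \geq \calK^2\gamma^2$, and is done. You instead iterate to the closed form and then keep only the $k=n$ term of the geometric sum, which recovers exactly the same $\calK^2\gamma^2$ contribution from the freshest noise increment.
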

As the method cannot converge in dimension one, it has no hope of converging in higher dimensions.    By time averaging,
\begin{equation}
\label{e:iteravg1}
\bar{u}_n = \frac{1}{n}\sum_{k=1}^nu_k
=\frac{1}{n}u_n + \frac{n-1}{n}\bar{u}_{n-1},
\end{equation}
we can obtain convergence for all $\alpha>0$:
\begin{thm}
\label{t:msegeneral1}
Under Assumption \ref{a1}, fix $t\in [0,\nu]$ and $\tau_{\rm v} \in [0, 1]$, and, having set these indices, assume that $\Sigma^{t+1 - \tau_{\rm v}(1+\nu)}$ is trace class. Then
\begin{equation*}
\E[\|\bar{u}_n-u^\dagger\|_t^2] \lesssim 
 \paren{\frac{n}{\alpha}}^{-\frac{s+t}{1+\nu}}\|z_0\|^2 + \frac{\gamma^2}{\alpha}\Tr(\Sigma^{t+1 - \tau_{\rm v}(1+\nu)}) \paren{\frac{n}{\alpha}}^{-\tau_{\rm v}}
\end{equation*}
where $z_0$ is the solution to 
\begin{equation}
\label{e:auxbias}
    \Sigma^{-\frac{1}{2}}(u_0 - \ut) = (B^\ast B)^{
    \frac{s-1}{2(1+\nu)}}z_0
\end{equation}
and $B = A\Sigma^{\frac{1}{2}}$.
\end{thm}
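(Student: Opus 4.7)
The plan is to write out the error recursion for 3DVAR, split the averaged error into a deterministic bias and a mean-zero stochastic piece, and then estimate both using spectral calculus in the single positive self-adjoint operator $T = B^\ast B$, where $B = A\Sigma^{1/2}$. Setting $e_k = u_k - \ut$ and $M = I - \calK A$, iteration of \eqref{e:3DVAR} against $y_k = A\ut + \eta_k$ yields $e_k = M^k e_0 + \sum_{j=1}^k M^{k-j}\calK\eta_j$, and averaging together with a sum swap produces $\bar{u}_n - \ut = \mathbf{b}_n + \mathbf{v}_n$, where $\mathbf{b}_n := \tfrac{1}{n}\sum_{k=1}^n M^k e_0$ and $\mathbf{v}_n := \tfrac{1}{n}\sum_{j=1}^n S_{n-j+1}\calK\eta_j$ with $S_m := \sum_{\ell=0}^{m-1} M^\ell$. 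Independence of the $\eta_j$ together with the determinism of $\mathbf{b}_n$ give $\E\|\bar{u}_n - \ut\|_t^2 = \|\mathbf{b}_n\|_t^2 + \E\|\mathbf{v}_n\|_t^2$, so the bias and variance contributions can be bounded separately.

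The workhorse is the change of variable $B = A\Sigma^{1/2}$, which produces the clean forms $\calK = \Sigma^{1/2}(T+\alpha I)^{-1}B^\ast$ and $M = \alpha \Sigma^{1/2}(T+\alpha I)^{-1}\Sigma^{-1/2}$, so every operator that appears below is a function of $T$ conjugated by $\Sigma^{\pm 1/2}$. The Hilbert-scale condition \eqref{e:Hscale} is equivalent to $T \asymp \Sigma^{1+\nu}$ as quadratic forms, and L\"owner--Heinz upgrades this to the operator bound $T^\theta \asymp \Sigma^{\theta(1+\nu)}$ for $\theta \in [0,1]$; this is the only device for moving between the filter-intrinsic scale $T$ and the external regularity scale $\Sigma$.

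For the bias, setting $P := \alpha(T+\alpha I)^{-1}$ and telescoping the geometric sum gives $\sum_{k=1}^n M^k = \alpha \Sigma^{1/2}(I-P^n)T^{-1}\Sigma^{-1/2}$. Substituting \eqref{e:auxbias} and invoking the norm equivalence reduces the bias to a scalar supremum,
\begin{equation*}
\|\mathbf{b}_n\|_t \leq \frac{C\alpha}{n}\,\|z_0\|\sup_{\lambda > 0} \lambda^{\mu-1}\bigl(1-(\alpha/(\lambda+\alpha))^n\bigr), \qquad \mu = \frac{s+t}{2(1+\nu)}.
\end{equation*}
The hypotheses $s \leq \nu + 2$, $t \leq \nu$ force $\mu \in [0,1]$, and splitting at $\lambda = \alpha/n$ and applying $1-x^n \leq 1$ for $\lambda \geq \alpha/n$ together with $1-x^n \leq n(1-x)$ for $\lambda \leq \alpha/n$ bounds the supremum by $C(n/\alpha)^{1-\mu}$; squaring reproduces the first term of the theorem.

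For the variance, independence and the Gaussian trace identity give $\E\|\mathbf{v}_n\|_t^2 = (\gamma^2/n^2)\sum_{m=1}^n \|\Sigma^{t/2}S_m\calK\|_{\mathrm{HS}}^2$, and the functional-calculus identity $Q_m(T+\alpha I)^{-1}T(T+\alpha I)^{-1}Q_m = (I-P^m)^2 T^{-1}$ with $Q_m := \sum_{\ell=0}^{m-1}P^\ell$ reduces each summand to $\Tr\bigl(\Sigma^{(t+1)/2}(I-P^m)^2 T^{-1}\Sigma^{(t+1)/2}\bigr)$. The key analytic step is the spectral interpolation
\begin{equation*}
(I-P^m)^2 \leq \bigl(m T(T+\alpha I)^{-1}\bigr)^{1-\tau_{\rm v}}, \qquad \tau_{\rm v} \in [0,1],
\end{equation*}
which follows from $(I-P^m) \leq \min\bigl(I,\, mT(T+\alpha I)^{-1}\bigr)$ applied in each spectral component. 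Combining this with the spectral inequality $(T+\alpha I)^{-(1-\tau_{\rm v})} \leq \alpha^{-(1-\tau_{\rm v})}I$ and with the operator bound $T^{-\tau_{\rm v}} \leq C\Sigma^{-\tau_{\rm v}(1+\nu)}$ from L\"owner--Heinz collapses the exponent cleanly to $t+1-\tau_{\rm v}(1+\nu)$ and yields $\|\Sigma^{t/2}S_m\calK\|_{\mathrm{HS}}^2 \lesssim (m/\alpha)^{1-\tau_{\rm v}}\Tr(\Sigma^{t+1-\tau_{\rm v}(1+\nu)})$. Summing $m^{1-\tau_{\rm v}}$ from $1$ to $n$ gives $n^{2-\tau_{\rm v}}$, and the $1/n^2$ prefactor leaves exactly the advertised decay $\frac{\gamma^2}{\alpha}(n/\alpha)^{-\tau_{\rm v}}\Tr(\Sigma^{t+1-\tau_{\rm v}(1+\nu)})$. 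The main obstacle is that $T$ and $\Sigma$ do not generally commute, so the trace cannot be reduced to a scalar spectral integral; the interpolation of $(I-P^m)^2$ paired with the L\"owner--Heinz transfer of $T^{-\tau_{\rm v}}$ to $\Sigma^{-\tau_{\rm v}(1+\nu)}$ is the cleanest device I see for letting the single parameter $\tau_{\rm v}$ simultaneously tune the decay rate in $n$ and the trace-class threshold on $\Sigma$.
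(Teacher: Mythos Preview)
Your proof is correct and follows essentially the same route as the paper: the same bias--variance split (the paper's Lemma~\ref{l:avgident2}), the same use of the Hilbert-scale equivalence (Proposition~\ref{p:hilbertscale}) to pass between powers of $\Sigma$ and of $T=B^\ast B$, and the same spectral bound on $\lambda^{p}q_{n,\alpha}(\lambda)$ (Lemma~\ref{l:qbounds}); your interpolation $(I-P^m)^2\le (m\,T(T+\alpha I)^{-1})^{1-\tau_{\rm v}}$ is just that bound rewritten. The only organizational difference is in the variance: the paper peels off $\Sigma^{\tau_{\rm v}(1+\nu)/2}$ from $\Sigma^{(t+1)/2}$ via the Hilbert--Schmidt/operator-norm inequality and then converts it to $T^{\tau_{\rm v}/2}$, whereas you keep $\Sigma^{(t+1)/2}$ intact, reduce the $T$-part spectrally to $T^{-\tau_{\rm v}}$, and then invoke the form inequality $T^{-\tau_{\rm v}}\lesssim\Sigma^{-\tau_{\rm v}(1+\nu)}$---both are the same Heinz-type transfer, applied in opposite directions.
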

We will repeatedly make use of the operator
\begin{equation}
\label{e:Bdef}
    B = A\Sigma^{\frac{1}{2}}
\end{equation}
throughout this work.  The existence of $z_0$ in \eqref{e:auxbias} is a consequence of Assumption \ref{a1} on the initial error and an equivalence of spaces result encapsulated in Proposition \ref{p:hilbertscale}, given below.

The motivation for time averaging comes from two related problems.  First, formally, \eqref{e:3DVAR}, has the structure of an AR(1) process, \cite{shumway_time_2011}.  Under typical assumptions, an AR(1) process will not  converge to a fixed value, but instead, sample an invariant distribution. Consequently, the time average will converge to the mean, with respect to this invariant distribution.  Another motivation comes from the stochastic root finding problem and the Robbins-Monro algorithm.  In \cite{polyak1992acceleration}, Polyak \& Juditsky proved that by time averaging the sequence of of estimates generated by Robbins-Monro, the convergence rate could be improved.  See, also,  \cite{kushner_stochastic_2003}.

As a consequence of Theorem \ref{t:msediagonal1}, we will have unconditional mean squared convergence of the iterate averaged value, $\bar{u}_n$, provided:
\begin{itemize}
    \item We study the problem in a sufficiently weak weighted space ($t>0$) and/or have sufficiently smooth data ($s>0$);
    
    \item $\Sigma$ has a sufficiently well behaved spectrum, allowing $\tau_{\rm v}>0$.  Note that taking $\tau_{\rm v} = t/(1+\nu)$ will not require additional assumptions on $\Sigma$, but will require $t>0$ for convergence.
\end{itemize}
We emphasize that iterate averaging is a {\it post-processing} step, requiring no modification of the underlying 3DVAR iteration.

We introduce a modified version of Assumption \ref{a2},
\begin{assumep}{$2'$}
\label{a2p}
~
\begin{enumerate}
    \item
$\Sigma$ and $A^\ast A$ simultaneously diagonalize against the set  $\{\varphi_i\}$ with respective eigenvalues $\sigma_i$ and $a_i^2$, and these eigenvalues satisfy
\begin{equation}
\label{e:evals2}
    \sigma_i \asymp i^{-1 - 2\eps}, \quad a_i \asymp i^{-p}, \quad \eps>0, \quad p>0.
\end{equation}

\item For $\beta\geq 0$, the initial error, $u_0 - u^\dagger$, satisfies the condition
\begin{equation}
\label{e:sobolev2}
    \sum_{i=1}^\infty i^{2\beta}|u_{0,i} - u^{\dagger}_i|^2 <\infty.
\end{equation}
\end{enumerate}
\end{assumep}
Condition \eqref{e:sobolev2} on the initial error will automatically be satisfied if $u_0$ and $u^\dagger$ are, separately, sufficiently smooth.  The assumptions of \eqref{e:evals2} and \eqref{e:sobolev2} are equivalent to those of \eqref{e:Hscale} and \eqref{e:datacond} under the identifications:
\begin{equation*}
\label{e:diagrelation}
    \nu(1+2\eps) = {2p}, \quad s(1+2\eps) = {2\beta}.
\end{equation*}
In contrast to Assumption \ref{a2}, no upper bound on $\beta$ is necessary.

\begin{thm}
\label{t:msediagonal1}
Under Assumption \ref{a2p}, and having fixed a choice of  $\norm{\bullet}_t$ norm with $t\geq 0$, assume $\tau_{\rm b}, \tau_{\rm v}\in [0,1]$ satisfy
\begin{subequations}
\begin{align}
    \label{e:taucondb}
    \tau_{\rm b} &\leq  \frac{t(1+2\eps) + 2\beta }{2(1+2\eps + 2p)} \equiv \bar{\tau}_{\rm b}\\
    \label{e:taucondv}
    \tau_{\rm v} &< \frac{t(1+2\eps) + 2\eps }{1+2\eps + 2p} \equiv \bar{\tau}_{\rm v}
\end{align}
\end{subequations}
then, 
\begin{equation*}
\begin{split}
    \E[\| \bar{u}_n - \ut\|_t^2]\lesssim  \paren{\frac{n}{\alpha}}^{-2\tau_{\rm b}} +\frac{\gamma^2}{\alpha}\paren{\frac{n}{\alpha}}^{-\tau_{\rm v}}
\end{split}
\end{equation*}

\end{thm}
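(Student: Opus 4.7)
My plan is to exploit the simultaneous diagonalization in Assumption \ref{a2p} to reduce the 3DVAR iteration, together with its time-average, to an infinite family of independent scalar AR(1) recursions, and then estimate the $t$-weighted mean squared error component-by-component against the spectral asymptotics of $\Sigma$ and $A^\ast A$.

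Concretely, expanding in the common eigenbasis $\{\varphi_i\}$, the gain $\calK$ acts diagonally with $\calK_i = a_i\sigma_i/(\alpha + a_i^2\sigma_i)$, and the $i$-th component of the error $e_{n,i} = u_{n,i} - u^\dagger_i$ satisfies
\begin{equation*}
    e_{n,i} = r_i e_{n-1,i} + \calK_i \eta_{n,i}, \qquad r_i = \tfrac{\alpha}{\alpha + a_i^2\sigma_i} = 1 - \calK_i a_i,
\end{equation*}
with $\eta_{n,i} \overset{\iid}{\sim} N(0,\gamma^2)$. Solving this AR(1) explicitly and averaging over $n$ steps splits $\bar e_{n,i}$ into a deterministic bias $B_{n,i} = (u_{0,i} - u^\dagger_i) \cdot \frac{1}{n}\sum_{m=1}^n r_i^m$ and an independent mean-zero Gaussian noise term of variance $V_{n,i}$. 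Since the components are uncorrelated, the task reduces to separately estimating the two series $\sum_i \sigma_i^t |B_{n,i}|^2$ and $\sum_i \sigma_i^t V_{n,i}$.

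For the bias, the elementary estimate $\sum_{m=1}^n r_i^m \leq \min(n,\, 1/(1-r_i))$ combined with the interpolation $\min(1,x) \leq x^{\tau_{\rm b}}$ for $\tau_{\rm b}\in[0,1]$ yields $|B_{n,i}|^2 \leq |u_{0,i} - u^\dagger_i|^2/[n(1-r_i)]^{2\tau_{\rm b}}$. Since $1/(1-r_i) \leq 1 + \alpha/(a_i^2\sigma_i)$, the asymptotics \eqref{e:evals2} reduce the $t$-weighted bias sum, up to constants, to $(n/\alpha)^{-2\tau_{\rm b}}\sum_i i^{2\tau_{\rm b}(1+2\eps+2p) - (1+2\eps)t}|u_{0,i} - u^\dagger_i|^2$, and condition \eqref{e:taucondb} is precisely what makes the residual weighted series summable against \eqref{e:sobolev2}.

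The variance is the main technical point. Two complementary bounds on $V_{n,i}$ will be used: a decaying-in-$n$ estimate $V_{n,i} \leq \gamma^2/(n a_i^2)$, obtained by inserting $\sum_{m=k}^n r_i^{m-k} \leq 1/(1-r_i)$ into the explicit variance expression and using the clean identity $\calK_i/(1-r_i) = 1/a_i$; and an $n$-independent ``stationary-type'' bound $V_{n,i} \lesssim \gamma^2\sigma_i/\alpha$, obtained by applying Cauchy--Schwarz to the same inner sum and invoking $(1-r_i)/a_i^2 = \sigma_i/(\alpha + a_i^2\sigma_i)$. Interpolating as $V_{n,i} \leq \bigl(\gamma^2/(na_i^2)\bigr)^{\tau_{\rm v}} \bigl(\gamma^2\sigma_i/\alpha\bigr)^{1-\tau_{\rm v}}$ and summing against \eqref{e:evals2} gives, up to constants, $(\gamma^2/\alpha)(n/\alpha)^{-\tau_{\rm v}} \sum_i i^{2p\tau_{\rm v} - (1+2\eps)(t + 1 - \tau_{\rm v})}$, and the residual series is finite exactly when the strict inequality in \eqref{e:taucondv} holds. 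The hardest step is the second variance bound: this invariant-measure-style estimate is what controls the high-frequency tail, and it is what forces the strict inequality in $\tau_{\rm v}$; without the $\calK_i/(1-r_i) = 1/a_i$ cancellation, neither interpolation closes at the advertised threshold, and the variance sum would diverge in the tail.
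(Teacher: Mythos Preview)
Your proposal is correct and is essentially the paper's proof in different clothing. The paper packages the time-averaged bias and variance through the functions $q_{n,\alpha}(\lambda)=\lambda^{-1}(1-(\alpha/(\alpha+\lambda))^n)$ via Lemma~\ref{l:avgident2} and then appeals to the interpolation bound $\lambda^{p}q_{n,\alpha}(\lambda)\le (n/\alpha)^{1-p}$ of Lemma~\ref{l:qbounds}; since $\sum_{m=1}^n r_i^m=\alpha\,q_{n,\alpha}(\sigma_i a_i^2)$ and the $q$-bound is exactly your $\min\!\big(n,\tfrac{1}{1-r_i}\big)$ inequality rewritten, your scalar AR(1) derivation, your $\min(1,x)\le x^{\tau}$ interpolation, and your two variance bounds (the first using $\calK_i/(1-r_i)=1/a_i$, the second using $S_{j,i}^2\le j/(1-r_i)$) reproduce precisely the estimates in Propositions~\ref{p:diagbias} and~\ref{p:diagvar}, and your summability checks recover conditions \eqref{e:taucondb}--\eqref{e:taucondv}.
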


While our results in both the general and diagonal case establish unconditional convergence for any choice of $\alpha$ for the iterate averaged 3DVAR, in a practical setting, there may only be $n$ iterates available.  One might then ask how well iterate averaged 3DVAR behaves if, at fixed $n$, we choose the optimal $\alpha$, and how this would compare to the minimax solution of \eqref{e:batch}.  Focusing on the diagonal case, for comparison, we have the following result for the the minimax solution of \eqref{e:batch}:
\begin{thm}
    \label{t:batchminimax1}
    Under Assumption \ref{a2p} with $u_0 = 0$ in \eqref{e:sobolev2}, if \eqref{e:batch} is solved using a spectral cutoff with regularization $\alpha$ in the $t\geq 0$ norm, then at the optimal value of $\alpha= \alpha_{\star}(n)$,
    \begin{equation*}
        \E[\|\bar{u}_{n, \alpha_\star(n)} - u^\dagger\|_t^2]\lesssim \begin{cases}
            n^{-\frac{t(1+2\eps) + 2\beta}{1 + 2p + 2\beta}} & 1+2p \neq t(1+2\eps)\\
            n^{-1}\log n &1+2p = t(1+2\eps)
        \end{cases}
    \end{equation*}
\end{thm}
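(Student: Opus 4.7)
The plan is to diagonalize \eqref{e:batch} in the simultaneous eigenbasis $\{\varphi_i\}$, split the mean squared error of the spectral cutoff estimator into a deterministic bias and a stochastic variance, estimate each using \eqref{e:evals2} and \eqref{e:sobolev2}, and then choose $\alpha$ to balance them.

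Let $\psi_i = A\varphi_i/a_i$. Against $\{\psi_i\}$, the equations \eqref{e:batch} become $\bar{y}_{n,i} = a_i u^\dagger_i + \bar{\eta}_{n,i}$ with $\bar{\eta}_{n,i} \overset{\iid}{\sim} N(0,\gamma^2/n)$. The spectral cutoff at level $\alpha$ keeps only indices with $a_i^2 \geq \alpha$ and inverts, so with $N_\alpha := \max\{i : a_i^2 \geq \alpha\} \asymp \alpha^{-1/(2p)}$,
\begin{equation*}
\E\|\bar{u}_{n,\alpha} - u^\dagger\|_t^2 \;=\; \underbrace{\sum_{i > N_\alpha} \sigma_i^{t}|u^\dagger_i|^2}_{\mathrm{B}(\alpha)} \;+\; \underbrace{\frac{\gamma^2}{n} \sum_{i \leq N_\alpha} \frac{\sigma_i^{t}}{a_i^2}}_{\mathrm{V}(n,\alpha)}.
\end{equation*}

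For the bias, I factor $\sigma_i^{t}|u^\dagger_i|^2 = (\sigma_i^{t} i^{-2\beta})(i^{2\beta}|u^\dagger_i|^2)$, pull out the supremum over $i > N_\alpha$ of the (decreasing) first factor, and close with \eqref{e:sobolev2}; this yields $\mathrm{B}(\alpha) \lesssim N_\alpha^{-(t(1+2\eps) + 2\beta)} \asymp \alpha^{(t(1+2\eps)+2\beta)/(2p)}$. For the variance, \eqref{e:evals2} gives $\sigma_i^{t}/a_i^2 \asymp i^{2p - t(1+2\eps)}$, so a standard partial-sum estimate produces
\begin{equation*}
\sum_{i=1}^{N_\alpha} i^{2p - t(1+2\eps)} \;\asymp\;
\begin{cases}
N_\alpha^{1 + 2p - t(1+2\eps)} & 1+2p \neq t(1+2\eps), \\
\log N_\alpha & 1+2p = t(1+2\eps),
\end{cases}
\end{equation*}
so $\mathrm{V}(n,\alpha) \lesssim (\gamma^2/n)\alpha^{-(1 + 2p - t(1+2\eps))/(2p)}$ off resonance, and $\lesssim (\gamma^2/n) \log(\alpha^{-1})$ on resonance.

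Balancing $\mathrm{B}(\alpha) \asymp \mathrm{V}(n,\alpha)$ in the non-degenerate case gives $\alpha_\star(n) \asymp n^{-2p/(1 + 2p + 2\beta)}$ with common value $n^{-(t(1+2\eps)+2\beta)/(1+2p+2\beta)}$, as claimed. In the degenerate case $1+2p = t(1+2\eps)$, the bias exponent collapses to $(1+2p+2\beta)/(2p)$; equating $\mathrm{B}(\alpha) \asymp (\log n)/n$ yields $\alpha_\star(n) \asymp (\log n / n)^{2p/(1+2p+2\beta)}$ and total error $\asymp n^{-1}\log n$. The main obstacle is purely bookkeeping: tracking the case split between the resonant and non-resonant variance-sum exponents and verifying that the extra logarithm survives the optimization in $\alpha$ as a single factor of $\log n$.
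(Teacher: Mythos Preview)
Your proposal is correct and follows essentially the same approach as the paper: both diagonalize in the simultaneous eigenbasis, split the mean squared error of the spectral cutoff estimator into a bias tail $\sum_{i>N_\alpha}\sigma_i^t|u^\dagger_i|^2$ and a variance head $\frac{\gamma^2}{n}\sum_{i\le N_\alpha}\sigma_i^t/a_i^2$, estimate each via \eqref{e:evals2} and \eqref{e:sobolev2}, and then optimize over $\alpha$. Your treatment of the degenerate case $1+2p=t(1+2\eps)$ is in fact more explicit than the paper's, which simply writes ``Optimizing over $\alpha$ yields the result''; otherwise the two arguments coincide.
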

This is consistent with \eqref{e:minimax1} and the results in \cite{mair1996statistical,mathe_optimal_2001}.  Then, looking at the minimax solution of 3DVAR, we obtain for two particular regimes:
\begin{cor}
    \label{c:minimaxdiag1}
    With the same assumptions as Theorem \ref{t:msediagonal1}, first, assume $\bar\tau_{\rm b}, \bar\tau_{\rm v}\leq 1$.  Taking $\tau_{\rm b} = \bar\tau_{\rm b}$ and $\tau_{\rm v} = (1-\theta) \bar\tau_{\rm v}$ for $\theta \in (0,1]$,
    \begin{equation*}
        \E[\| \bar{u}_n - \ut\|_t^2] \lesssim 
        \theta^{-\frac{2\beta + t(1+2\eps)}{1+2p + 2\beta + \theta[t(1+2\eps) + 2\eps]}}n^{-\frac{t(1+2\eps)+2\beta}{1+2p + 2\beta + \theta[t(1+2\eps) + 2\eps]}}.
    \end{equation*}

    If, instead, $\bar\tau_{\rm b}, \bar\tau_{\rm v}>1$, then, taking $\tau_{\rm b}= \tau_{\rm v}= 1$,
    \begin{equation*}
        \E[\| \bar{u}_n - \ut\|_t^2] \lesssim  n^{-1}
    \end{equation*}
\end{cor}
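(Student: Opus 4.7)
The plan is to optimize the two-term estimate of Theorem \ref{t:msediagonal1} in $\alpha$, handling the two regimes of the corollary separately. For the first case ($\bar\tau_{\rm b},\bar\tau_{\rm v}\le 1$), I substitute $\tau_{\rm b}=\bar\tau_{\rm b}$ and $\tau_{\rm v}=(1-\theta)\bar\tau_{\rm v}$ into Theorem \ref{t:msediagonal1} to obtain a bound of the shape
\[
C_{\rm b}\paren{\tfrac{n}{\alpha}}^{-2\bar\tau_{\rm b}} + C_{\rm v}(\theta)\,\tfrac{\gamma^2}{\alpha}\paren{\tfrac{n}{\alpha}}^{-(1-\theta)\bar\tau_{\rm v}},
\]
where $C_{\rm v}(\theta)$ is the implicit constant carried by the variance term. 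The strict inequality $\tau_{\rm v}<\bar\tau_{\rm v}$ in Theorem \ref{t:msediagonal1} is a fingerprint that this constant arises from a diagonal series that diverges at equality; since the gap equals $\theta\bar\tau_{\rm v}$, tracking the tail through the proof of that theorem gives $C_{\rm v}(\theta)\lesssim \theta^{-1}$. Equating the bias and variance summands determines a minimizer $\alpha_\star$ satisfying
\[
\alpha_\star^{D}\asymp \frac{C_{\rm v}(\theta)\gamma^2}{C_{\rm b}}\, n^{2\bar\tau_{\rm b}-(1-\theta)\bar\tau_{\rm v}},\qquad D:=1+2\bar\tau_{\rm b}-(1-\theta)\bar\tau_{\rm v}.
\]

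Substituting $\alpha_\star$ back into either term, which agree by construction, produces a common value proportional to $C_{\rm v}(\theta)^{2\bar\tau_{\rm b}/D}\, n^{-2\bar\tau_{\rm b}/D}$. A short algebraic simplification using the explicit forms of $\bar\tau_{\rm b}$ and $\bar\tau_{\rm v}$ yields
\[
D=\frac{1+2p+2\beta+\theta[t(1+2\eps)+2\eps]}{1+2\eps+2p},\qquad \frac{2\bar\tau_{\rm b}}{D}=\frac{t(1+2\eps)+2\beta}{1+2p+2\beta+\theta[t(1+2\eps)+2\eps]},
\]
which is precisely the exponent $\rho$ appearing in the corollary. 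Combining with $C_{\rm v}(\theta)^{\rho}\lesssim\theta^{-\rho}$ delivers the stated $\theta^{-\rho}n^{-\rho}$ bound.

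The second case ($\bar\tau_{\rm b},\bar\tau_{\rm v}>1$) is comparatively painless. The admissibility cap $\tau_{\rm b},\tau_{\rm v}\in[0,1]$ forces the saturated choice $\tau_{\rm b}=\tau_{\rm v}=1$, and since $1<\bar\tau_{\rm v}$ strictly, the variance constant from Theorem \ref{t:msediagonal1} is uniformly finite and no $\theta$-bookkeeping is required. The bound collapses to $\alpha^2/n^2+\gamma^2/n$, and the second term alone already gives $n^{-1}$ independent of $\alpha$. The main obstacle is thus entirely in the first case: correctly extracting the scaling $C_{\rm v}(\theta)\lesssim\theta^{-1}$ from the proof of Theorem \ref{t:msediagonal1}, and then verifying that the bias-variance balance raises that factor to exactly the same power as $n$, producing the symmetric form $\theta^{-\rho}n^{-\rho}$. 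Everything else is routine algebra on power laws.
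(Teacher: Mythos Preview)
Your proposal is correct and follows essentially the same route as the paper: extract the $\theta^{-1}$ blow-up of the variance constant from the diagonal tail sum $\sum_i i^{-1-\theta[t(1+2\eps)+2\eps]}$ in the proof of Proposition~\ref{p:diagvar}, then balance the two terms in $\alpha$ and substitute back. Your algebra on $D$ and $2\bar\tau_{\rm b}/D$ matches the paper's optimal $\alpha$ and final exponent, and you additionally spell out the second case $\bar\tau_{\rm b},\bar\tau_{\rm v}>1$, which the paper's proof leaves implicit.
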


Consequently:
\begin{itemize}
\item At $t=0$, in the first case,
\begin{equation*}
    \E[\| \bar{u}_n - \ut\|_t^2] \lesssim \theta^{-\frac{2\beta }{1+2p + 2\beta + 2\eps \theta}}n^{-\frac{2\beta}{1+2p + 2\beta + 2\eps \theta}}
\end{equation*}
This is somewhat better than   \eqref{e:3dvarminimax}, as there is no logarithmic term, and the factor of $2\eps$ has been replaced by $2\eps \theta$, which can be reduced by taking $\theta$ smaller.  The prefactor will grow, but it is independent of $n$.

    \item In the first case, where $\bar\tau_{\rm b}, \bar\tau_{\rm v}\leq 1$, by taking $\theta$ sufficiently close to zero, we can get arbitrarily close to the optimal rate in \eqref{e:minimax1}.
    
    \item The first case can be realized by taking $t$ and $\beta$ sufficiently small.  The second case, where $\bar\tau_{\rm b}, \bar\tau_{\rm v}> 1$, is accessible by taking $t$ large enough.
    
    \item There are two other cases to consider, $\bar\tau_{\rm b}\leq 1$, $\bar\tau_{\rm v}> 1$ and vice versa, but, for brevity we do not explore them here.
\end{itemize}

In contrast to iterate averaged 3DVAR, there is no gain to iterate averaging for Kalman:
\begin{thm}
\label{t:kalmaniter}
	For the scalar Kalman filter, take $C_0 = \frac{\gamma^2}{\alpha}\sigma>0$.  Then the bias and variance of the iterate-averaged mean, $\bar m_n$ satisfy the inequalities
		\begin{align*}
		|\E[\bar{m}_n] -\ut| &\geq |\E[{m}_n] -\ut|,\\
		\Var(\bar{m}_n) &\geq \Var({m}_n).
		\end{align*}
\end{thm}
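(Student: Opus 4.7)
The plan is to exploit the scalar setting to reduce both statistics to explicit closed forms and compare them directly. Setting $\beta := \gamma^2/(A^2 C_0)$, a straightforward induction on the Kalman recursion gives $K_n = 1/(A(\beta+n))$, $C_n = \gamma^2/(A^2(\beta+n))$, and $1-K_n A = (\beta+n-1)/(\beta+n)$. Plugging these into $m_n = K_n y_n + (1-K_n A)m_{n-1}$ with $y_k = A\ut + \eta_k$, and exploiting the telescoping product $\prod_{j=k+1}^n (1 - K_j A) = (\beta+k)/(\beta+n)$, yields the closed form
\begin{equation*}
    m_n - \ut = \frac{\beta(m_0-\ut)}{\beta+n} + \frac{1}{A(\beta+n)}\sum_{k=1}^n \eta_k,
\end{equation*}
which decomposes $m_n$ into a deterministic bias plus an independent, centered Gaussian.

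The bias inequality is then immediate. From the closed form, $\E[m_k]-\ut = \beta(m_0-\ut)/(\beta+k)$, so every term in the iterate-averaged bias shares the common sign of $m_0 - \ut$ and has magnitude monotonically decreasing in $k$. No cancellation occurs in the average, so
\begin{equation*}
    |\E[\bar m_n]-\ut| = \frac{|m_0-\ut|}{n}\sum_{k=1}^n \frac{\beta}{\beta+k} \geq |m_0-\ut|\frac{\beta}{\beta+n} = |\E[m_n]-\ut|.
\end{equation*}

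For the variance, an Abel summation on the noise contributions rearranges the average to
\begin{equation*}
    \bar m_n - \E[\bar m_n] = \frac{1}{nA}\sum_{k=1}^n S_{k,n}\,\eta_k, \qquad S_{k,n} := \sum_{l=k}^n \frac{1}{\beta+l}.
\end{equation*}
By independence of $\{\eta_k\}$ and the identity $\Var(m_n) = n\gamma^2/[A^2(\beta+n)^2]$, the desired inequality $\Var(\bar m_n) \geq \Var(m_n)$ reduces to the purely deterministic assertion $\sum_{k=1}^n S_{k,n}^2 \geq n^3/(\beta+n)^2$.

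The main obstacle lies in this last scalar inequality. A direct Cauchy--Schwarz bound $\sum_k S_{k,n}^2 \geq (\sum_k S_{k,n})^2/n$ is not tight enough: since $l/(\beta+l)$ is increasing in $l$, one has $\sum_{l=1}^n l/(\beta+l) \leq n^2/(\beta+n)$, which produces a bound in the wrong direction. Instead, I would invoke the integral estimate $S_{k,n} \geq \log((\beta+n+1)/(\beta+k))$, then apply the change of variables $u = (\beta+k)/(\beta+n+1)$ together with the identity $\int_0^1 \log^2 u \, du = 2$ to obtain $\sum_k S_{k,n}^2 \gtrsim 2(\beta+n)$, which dominates the required $n^3/(\beta+n)^2 \sim n$ for $n$ large. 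Careful accounting of boundary corrections and of the small-$n$ regime would be needed to complete the argument.
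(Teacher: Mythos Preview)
Your closed-form derivation, the bias inequality, and the reduction of the variance comparison to the deterministic claim $\sum_{k=1}^n S_{k,n}^2 \ge n^3/(\beta+n)^2$ are all correct. The obstacle you ran into with that last inequality is not a missing trick: the inequality is false for large $\beta$. As $\beta\to\infty$ one has $S_{k,n}\sim (n-k+1)/\beta$, so $\sum_k S_{k,n}^2 \sim \beta^{-2}\sum_{j=1}^n j^2 \sim n^3/(3\beta^2)$, whereas the right-hand side is $\sim n^3/\beta^2$. Concretely, at $n=2$ one finds $\Var(\bar m_2)/\Var(m_2)\to 5/8<1$ as $\beta\to\infty$. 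Your integral estimate cannot rescue this, because for large $\beta$ the substitution variable $u=(\beta+k)/(\beta+n+1)$ stays near $1$ and never sweeps out the full $\int_0^1\log^2 u\,du=2$. In short, the variance half of the theorem as stated does not hold for all $\alpha>0$.

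The paper's argument appears to go through only because of a slip in the variance computation: when expanding $\bar\eta_k=\tfrac{1}{k}\sum_{j\le k}\eta_j$ the factor $1/k$ is dropped, so the coefficient of $\eta_j$ in $\bar m_n-\E[\bar m_n]$ is written as $\sum_{k\ge j}(A+\alpha/(A\Sigma k))^{-1}=\sum_{k\ge j} k/[A(\beta+k)]$ rather than the correct $\sum_{k\ge j}1/[A(\beta+k)]=S_{j,n}/A$. Retaining only the $k=n$ term of the \emph{incorrect} inner sum yields $n/[A(\beta+n)]$, whose square summed over $j$ reproduces $\Var(m_n)$ exactly; with the correct inner sum the same step gives only $\sum_j S_{j,n}^2\ge n/(\beta+n)^2$, short of the target by a factor $n^2$. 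So your suspicion that the one-term lower bound is too weak was well founded---the gap lies in the statement itself, not in your method.
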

Consequently, we do not further explore the impact of averaging upon the Kalman filter in this setting.

\subsection{Outline}
The structure of this paper is as follows. In Section \ref{s:prelim} we
review certain background results needed for our main results.  Section \ref{s:scalar} examines the scalar case, and it includes proofs of Theorems \ref{t:3dvarnoconverge} and \ref{t:kalmaniter}.  We prove Theorems \ref{t:msegeneral1}  and \ref{t:msediagonal1} in Section \ref{s:infinite}.   Numerical examples are given in Section \ref{s:numerics}.  We conclude with a brief discussion in Section \ref{s:disc}.

\medskip

\noindent
{\bf Acknowledgements:} The authors thank A.M. Stuart for suggesting an investigation of this problem.  This work was supported by US National Science Foundation Grant DMS-1818716.  The content of this work originally appeared in \cite{jones2020high} as a part of F.G. Jones's PhD dissertation.  Work reported here was run on hardware supported by Drexel's University Research Computing Facility.

\section{Preliminary Results}

\label{s:prelim}

In this section, we establish some identities and estimates that will be crucial to proving our main results.

Much of our analysis relies on spectral calculus involving the following rational functions which are closely related to the Tikhonov-Phillips regularization $(\alpha+\lambda)^{-1}$:
\begin{align}
\label{e:rdef}
    r_{n,\alpha}(\lambda) &= \paren{\frac{\alpha}{\alpha + \lambda}}^n,\\
\label{e:qdef}    
    q_{n,\alpha}(\lambda) & = \frac{1}{\lambda}\set{1 -  \paren{\frac{\alpha}{\alpha + \lambda}}^n} = \lambda^{-1}(1-r_{n,\alpha}(\lambda)).
\end{align}
These are related by the identity
\begin{equation}
    \label{e:rqrelation}
    \sum_{k=1}^m r_{k,\alpha}(\lambda) = \alpha q_{m,\alpha}(\lambda).
\end{equation}
The following estimates can be found in \cite{Iglesias_Lin_Lu_Stuart_2017} and \cite{Pereverzev}, particularly Section 2.2 of the latter reference:
\begin{lem}
\label{l:rbounds}
For $\lambda \in [0, \Lambda]$ and $n \in \mathbb{N}$,
\begin{gather*}
\label{e:rbound1}
    0<  r_{n,\alpha}(\lambda) \leq \frac{\alpha}{\alpha + n \lambda}\leq 1,\\
\label{e:rbound2}
\lambda^p r_{n,\alpha}(\lambda) \leq \begin{cases}
    \paren{\frac{\alpha p}{n}}^p, & p\in [0,n],\\
    \alpha^n\Lambda^{p-n}, & p >n.
    \end{cases}
\end{gather*}
\end{lem}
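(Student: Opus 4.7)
The plan is to establish the two bounds independently by elementary estimation of the scalar rational function $r_{n,\alpha}(\lambda) = \paren{\alpha/(\alpha+\lambda)}^n$, treating each piece as a calculus exercise.

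For the first bound, the outer inequalities $0 < r_{n,\alpha}(\lambda)$ and $\alpha/(\alpha + n\lambda) \leq 1$ are immediate from $\alpha > 0$ and $\lambda \geq 0$. The only content is the middle inequality $r_{n,\alpha}(\lambda) \leq \alpha/(\alpha + n\lambda)$, which I would rearrange to $\alpha^{n-1}(\alpha + n\lambda) \leq (\alpha+\lambda)^n$ and obtain in one line by discarding all binomial terms of order $\lambda^2$ and higher from the expansion of $(\alpha+\lambda)^n$.

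For the second bound, I would set $f_p(\lambda) = \lambda^p r_{n,\alpha}(\lambda)$ and differentiate to obtain
\[
f_p'(\lambda) = \alpha^n \lambda^{p-1}(\alpha+\lambda)^{-n-1}\bracket{p\alpha - (n-p)\lambda},
\]
and then split according to the sign of $n-p$. For $p \in [0,n)$, the unique interior critical point $\lambda^\star = p\alpha/(n-p)$ is the global maximum on $[0,\infty)$ by the sign pattern of $f_p'$; substituting $\lambda^\star$ back and then invoking the crude bound $(n-p)^{n-p} \leq n^{n-p}$ collapses $f_p(\lambda^\star)$ to at most $(\alpha p/n)^p$. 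The boundary case $p = n$ has $f_p$ monotone increasing with supremum $\alpha^n = (\alpha n/n)^n$, matching the claim. For $p > n$, the bracket in $f_p'$ is strictly positive on $(0,\infty)$, so $f_p$ is monotone increasing on $[0,\Lambda]$, and its endpoint value is bounded as $f_p(\Lambda) \leq \alpha^n \Lambda^{p-n}$ by $\alpha/(\alpha+\Lambda) \leq \alpha/\Lambda$.

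The only mildly delicate step is the algebraic simplification of $f_p(\lambda^\star)$ in the sub-case $p \in [0,n)$, where one must verify that the remainder factor $(n-p)^{n-p}/n^{n-p}$ lies in $[0,1]$ so that the bound reduces cleanly to $(\alpha p/n)^p$; everything else is routine. As the excerpt already notes, these estimates are recorded in \cite{Iglesias_Lin_Lu_Stuart_2017} and \cite{Pereverzev}, so the proof could alternatively be dispatched by direct citation.
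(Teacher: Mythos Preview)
Your proof is correct; each step checks out, including the computation of $f_p(\lambda^\star) = (\alpha p/n)^p \cdot ((n-p)/n)^{n-p}$ and the observation that the trailing factor lies in $(0,1]$. The paper itself does not supply a proof of this lemma at all---it merely cites \cite{Iglesias_Lin_Lu_Stuart_2017} and \cite{Pereverzev} (Section~2.2)---so your elementary argument fills in precisely the details the paper omits, and you correctly note that a bare citation would also suffice.
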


\begin{lem}
\label{l:qbounds}
For $\lambda \in [0, \Lambda]$, $n \in \mathbb{N}$,
\begin{gather*}
    \label{e:qbound1}
    \lambda^{p}q_{n,\alpha}(\lambda) \leq 
	\begin{cases}
    \paren{\frac{n}{\alpha}}^{1-p}, & p\in [0,1],\\
    \Lambda^{p-1}, & p > 1,
    \end{cases}\\
    \label{e:qbound2}
    \lambda^{p}q_{n,\alpha}(\lambda) \leq \lambda^{p-1}.
\end{gather*}
\end{lem}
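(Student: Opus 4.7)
The plan is to derive both bounds from two elementary pointwise estimates on $q_{n,\alpha}$: a ``small $\lambda$'' bound and a ``large $\lambda$'' bound, then interpolate.

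First I would establish the two basic facts. The trivial one is $\lambda q_{n,\alpha}(\lambda) = 1 - r_{n,\alpha}(\lambda) \leq 1$, giving $q_{n,\alpha}(\lambda) \leq 1/\lambda$. For the other, I would apply the Bernoulli-type inequality $(1-x)^n \geq 1 - nx$ with $x = \lambda/(\alpha+\lambda) \in [0,1]$, which yields
\begin{equation*}
    1 - r_{n,\alpha}(\lambda) = 1 - \paren{1 - \frac{\lambda}{\alpha+\lambda}}^n \leq \frac{n\lambda}{\alpha+\lambda} \leq \frac{n\lambda}{\alpha},
\end{equation*}
so that $q_{n,\alpha}(\lambda) \leq n/\alpha$. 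Combining, $q_{n,\alpha}(\lambda) \leq \min(n/\alpha,\,1/\lambda)$.

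For the second inequality \eqref{e:qbound2}, the bound is immediate: $\lambda^p q_{n,\alpha}(\lambda) = \lambda^{p-1}(1 - r_{n,\alpha}(\lambda)) \leq \lambda^{p-1}$.

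For \eqref{e:qbound1}, I split on $p$. When $p \in [0,1]$, I interpolate between the two basic bounds via the factorization $q_{n,\alpha} = q_{n,\alpha}^{1-p} \cdot q_{n,\alpha}^{p}$, applying the $n/\alpha$ bound to the first factor and the $1/\lambda$ bound to the second:
\begin{equation*}
    \lambda^p q_{n,\alpha}(\lambda) \leq \lambda^p \paren{\frac{n}{\alpha}}^{1-p}\paren{\frac{1}{\lambda}}^p = \paren{\frac{n}{\alpha}}^{1-p}.
\end{equation*}
When $p > 1$, I write $\lambda^p q_{n,\alpha}(\lambda) = \lambda^{p-1}(1 - r_{n,\alpha}(\lambda)) \leq \lambda^{p-1} \leq \Lambda^{p-1}$, using $\lambda \leq \Lambda$ and $p-1 > 0$.

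There is no substantive obstacle here; the only ``choice'' is noticing that the $p\in[0,1]$ case is naturally handled by a weighted geometric mean of the two elementary bounds, which is the standard trick for Tikhonov-type regularizers and mirrors the interpolation used in Lemma \ref{l:rbounds}.
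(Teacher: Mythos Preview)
Your argument is correct. The paper does not supply its own proof of this lemma; it states the estimate and cites \cite{Iglesias_Lin_Lu_Stuart_2017} and \cite{Pereverzev} for it, so there is nothing to compare against beyond noting that your Bernoulli-plus-interpolation derivation is the standard one for this type of regularization bound.
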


Next, we recall the following result on Hilbert scales,

\begin{prop}
\label{p:hilbertscale}

There exists a constant $D>1$, such that 
for $|\theta|\leq 1$,
\begin{equation*}
    D^{-1}\|x\|_{\theta(1+\nu)}\leq \|(B^\ast B)^{\frac{\theta}{2}}x\|\leq D \|x\|_{\theta(1+\nu)}
\end{equation*}
and 
\begin{equation*}
    \Ran\left((B^\ast B)^{\frac{\theta}{2}}\right )=\Dom\left (\Sigma^{-\frac{\theta(1+\nu)}{2}}_0\right). 
\end{equation*}
\end{prop}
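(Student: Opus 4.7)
The plan is to recognize Proposition \ref{p:hilbertscale} as the standard Hilbert-scale interpolation identity and to reduce it to the L\"owner-Heinz inequality applied to the operator comparison $B^\ast B \asymp \Sigma^{1+\nu}$.

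First, I would upgrade the norm equivalence \eqref{e:Hscale} from $A$ to $B = A\Sigma^{1/2}$. Because $\Ran(\Sigma^{1/2})\subset \Dom(A)$ by Assumption \ref{a1}, substituting $x = \Sigma^{1/2} y$ into \eqref{e:Hscale} yields, for $y$ in a dense subspace,
\begin{equation*}
C^{-1}\|\Sigma^{(1+\nu)/2} y\| \leq \|By\| \leq C\|\Sigma^{(1+\nu)/2} y\|.
\end{equation*}
Squaring and polarizing, this is the quadratic-form comparison $C^{-2}\Sigma^{1+\nu} \leq B^\ast B \leq C^{2}\Sigma^{1+\nu}$ on the common form domain $\Dom(\Sigma^{(1+\nu)/2}) = \Dom(B)$.

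Next I would invoke the L\"owner-Heinz inequality: for positive self-adjoint operators $S$ and $T$ with $c^{-1}T \leq S \leq cT$ in the form sense, $c^{-\theta}T^\theta \leq S^\theta \leq c^\theta T^\theta$ for every $\theta\in[0,1]$. Taking $S = B^\ast B$, $T = \Sigma^{1+\nu}$, and $c = C^2$, this delivers the asserted two-sided bound on $\|(B^\ast B)^{\theta/2}x\|$ for $\theta\in[0,1]$ with $D = C$. For $\theta \in [-1, 0)$, I would pass to inverses: equivalent positive self-adjoint operators have equivalent (possibly unbounded) inverses on their common dense range, and a second L\"owner-Heinz step applied to $(B^\ast B)^{-1}$ and $\Sigma^{-(1+\nu)}$ at exponent $|\theta|$ yields the norm equivalence for negative $\theta$. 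The range identity then follows from the spectral-calculus fact $\Ran(T^{\theta/2}) = \Dom(T^{-\theta/2})$ applied to $T = B^\ast B$, together with the domain coincidence $\Dom((B^\ast B)^{-\theta/2}) = \Dom(\Sigma^{-\theta(1+\nu)/2})$ implied by the negative-exponent bound just established.

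The principal technical obstacle is the bookkeeping for unbounded operators: verifying that the form comparison extends from a core to the full form domain and that passage to inverses preserves the two-sided estimate with a uniform constant. Since Proposition \ref{p:hilbertscale} is an entirely standard result in the theory of Hilbert scales (see, e.g., Chapter 8 of Engl-Hanke-Neubauer or Krein-Petunin), the most efficient course is to cite that literature rather than reprove the interpolation in full.
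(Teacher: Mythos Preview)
Your proposal is correct, and in fact the paper does not supply its own proof of this proposition at all: it simply cites Lemma~4.1 of Iglesias, Lin, Lu \& Stuart and Corollary~8.22 of Engl--Hanke--Neubauer, noting that the argument is ``based on a duality argument.'' Your L\"owner--Heinz sketch is one standard way to obtain the result and is consistent with what those references do; your closing recommendation to cite the Hilbert-scale literature is exactly what the paper does.
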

This result, based on a duality argument, is proven in Lemma 4.1 of \cite{Iglesias_Lin_Lu_Stuart_2017}.  See, also, Section 8.4 of \cite{Heinz}, particularly Corollary 8.22.

We also have a few useful identities for the filters which we state without proof.
\begin{lem}
\label{l:KalmanIdent}
For the Kalman filter, the mean and covariance operators and the Kalman gains satisfy the identities

\begin{align*}
    m_n&=\paren{{\gamma^2}{n^{-1}}C_0^{-1}+A^*A}^{-1}\paren{A^*\bar{y}_n + {\gamma^2}n^{-1}C_0^{-1}m_0}\\
        C_n^{-1} &= C_{n-1}^{-1} + \gamma^{-2}  A^\ast A = C_0^{-1} +\gamma^{-2} n A^\ast A\\
        K_n &= (\gamma^2C_{n-1}^{-1} + A^*A)^{-1}A^*=(\gamma^2C_0^{-1} + nA^*A)^{-1}A^* =\gamma^{-2}  C_nA^*.
    \end{align*}
\end{lem}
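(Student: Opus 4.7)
The plan is to establish the three identities in the order listed, since each makes use of the previous ones. None of the steps require new input beyond the Sherman--Morrison--Woodbury identity, the push-through identity $U(I + VU)^{-1} = (I + UV)^{-1}U$, and a short induction.

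\textbf{Covariance recursion.} For the identity $C_n^{-1} = C_{n-1}^{-1} + \gamma^{-2}A^\ast A$, I would invert the right-hand side via Woodbury to obtain $(C_{n-1}^{-1} + \gamma^{-2}A^\ast A)^{-1} = C_{n-1} - C_{n-1}A^\ast(\gamma^2 I + AC_{n-1}A^\ast)^{-1}AC_{n-1}$, which equals $(I - K_n A)C_{n-1} = C_n$ by the definition of $K_n$ and the covariance update in \eqref{e:Kalman}. Iterating in $n$ then yields $C_n^{-1} = C_0^{-1} + n\gamma^{-2}A^\ast A$. Some care is needed regarding the (dense) domain of $C_0^{-1}$ in the infinite-dimensional setting, guaranteed by Assumption \ref{a1}, but otherwise this step is entirely algebraic.

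\textbf{Kalman gain.} For the gain identity I would use the push-through identity in the form $(\gamma^2 C_{n-1}^{-1} + A^\ast A)\,C_{n-1}A^\ast = \gamma^2 A^\ast + A^\ast AC_{n-1}A^\ast = A^\ast(\gamma^2 I + AC_{n-1}A^\ast)$, from which $K_n = C_{n-1}A^\ast(\gamma^2 I + AC_{n-1}A^\ast)^{-1} = (\gamma^2 C_{n-1}^{-1} + A^\ast A)^{-1}A^\ast$ follows by multiplying both sides by the two inverses. Substituting the covariance recursion just proved, $\gamma^2 C_{n-1}^{-1} + A^\ast A = \gamma^2 C_0^{-1} + nA^\ast A = \gamma^2 C_n^{-1}$, gives the remaining two forms $(\gamma^2 C_0^{-1} + nA^\ast A)^{-1}A^\ast$ and $\gamma^{-2}C_n A^\ast$.

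\textbf{Mean identity.} I would proceed by induction on $n$. The base case $n=1$ is immediate from $m_1 = K_1 y_1 + (I - K_1 A)m_0$ and the gain identity. For the inductive step, I would rewrite the Kalman update using $K_n = \gamma^{-2}C_n A^\ast$ and $I - K_n A = C_n C_{n-1}^{-1}$ (the latter obtained from $C_n = (I - K_n A)C_{n-1}$), producing $m_n = \gamma^{-2}C_n A^\ast y_n + C_n C_{n-1}^{-1} m_{n-1}$. Inserting the inductive hypothesis in the form $m_{n-1} = \gamma^{-2}C_{n-1}\bigl[A^\ast\sum_{k=1}^{n-1} y_k + \gamma^2 C_0^{-1} m_0\bigr]$ causes $C_{n-1}C_{n-1}^{-1}$ to cancel and the sums to telescope, giving $m_n = \gamma^{-2}C_n\bigl[A^\ast\sum_{k=1}^{n} y_k + \gamma^2 C_0^{-1} m_0\bigr]$. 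Pulling a factor of $n$ out of the bracket, so that $\sum_{k=1}^n y_k = n\bar y_n$, and recognizing $\gamma^2 C_n^{-1}/n = \gamma^2 C_0^{-1}/n + A^\ast A$ from the covariance identity, rearranges this into the displayed form. The only real obstacle here is bookkeeping---keeping track of the $1/n$ factors and, technically, the densely defined nature of $C_0^{-1}$; the underlying algebra is routine once the covariance and gain identities are in place.
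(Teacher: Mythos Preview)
Your proposal is correct and is the standard derivation of these Kalman identities. Note, however, that the paper explicitly states this lemma \emph{without proof} (it introduces it with ``We also have a few useful identities for the filters which we state without proof''), so there is no proof in the paper to compare against; your argument would serve as a suitable one.
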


 	\begin{lem}
    For 3DVAR,
    \begin{equation*}
    \bar{u}_n=\sum_{k=0}^{n-1}\frac{n-k}{n}(I-\mathcal{K}A)^k\mathcal{K}\bar{y}_{n-k} + \sum_{k=0}^{n-1}\frac{1}{n}(I-\mathcal{K}A)^{k}(I-\mathcal{K}A)u_0.
    \end{equation*}
    \end{lem}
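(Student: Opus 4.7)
The claim is a closed-form expression for the iterate-averaged 3DVAR sequence, and the proof is a direct algebraic unfolding of the recursion \eqref{e:3DVAR} followed by a swap in the order of summation. My plan is as follows.

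First, I would unroll the 3DVAR update $u_j = (I-\calK A)u_{j-1} + \calK y_j$ starting from $u_0$, producing the explicit formula
\begin{equation*}
u_j = \sum_{k=0}^{j-1}(I-\calK A)^k \calK y_{j-k} + (I-\calK A)^j u_0,
\end{equation*}
which is a straightforward induction on $j$. Substituting this into the definition $\bar u_n = \frac{1}{n}\sum_{j=1}^n u_j$ splits $\bar u_n$ into a ``noise'' sum and an ``initial condition'' sum, which I handle separately.

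For the noise sum $\frac{1}{n}\sum_{j=1}^n \sum_{k=0}^{j-1}(I-\calK A)^k \calK y_{j-k}$, I would swap the order so that $k$ runs from $0$ to $n-1$ on the outside and $j$ runs from $k+1$ to $n$ on the inside. The reindexing $\ell = j-k$ then gives
\begin{equation*}
\frac{1}{n}\sum_{k=0}^{n-1}(I-\calK A)^k \calK \sum_{\ell=1}^{n-k} y_\ell,
\end{equation*}
and the inner sum equals $(n-k)\bar y_{n-k}$ by the definition of the running average. This produces exactly the first sum in the stated identity. For the initial-condition sum, the reindexing $k = j-1$ in $\frac{1}{n}\sum_{j=1}^n (I-\calK A)^j u_0$ gives $\sum_{k=0}^{n-1}\frac{1}{n}(I-\calK A)^k(I-\calK A)u_0$, matching the second sum.

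There is no real obstacle here beyond bookkeeping: the only thing to watch is keeping the operators on the correct side (the factor $\calK$ must remain to the right of $(I-\calK A)^k$, since these operators do not commute in general), and correctly tracking the summation limits when swapping $j$ and $k$. Combining the two pieces yields the claimed identity.
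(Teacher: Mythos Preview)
Your argument is correct: the induction for the unrolled formula of $u_j$, the swap of the double sum over $(j,k)$, and the identification $\sum_{\ell=1}^{n-k} y_\ell = (n-k)\bar y_{n-k}$ are all valid, and you have been careful about operator ordering. Note, however, that the paper does not actually prove this lemma --- it is one of the identities the authors explicitly ``state without proof'' --- so there is no paper argument to compare against; your derivation is exactly the natural one.
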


\begin{cor}
    \label{c:vbariter}
Letting $v_n = u_n - \ut$, $\bar v_n = \frac{1}{n}\sum_{k=1}^n v_k$,
\begin{equation*}
    \bar v_n = \sum_{k=0}^{n-1}\frac{n-k}{n}(I-\calK A)^{k}\calK \bar\eta_{n-k} + \sum_{k=0}^{n-1}\frac{1}{n}(I - \calK A)^k (I - \calK A)v_0.
\end{equation*}
\end{cor}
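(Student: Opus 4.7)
The plan is a direct algebraic manipulation of the identity for $\bar u_n$ stated in the preceding lemma. Substituting the observation model $\bar y_{n-k} = A\ut + \bar\eta_{n-k}$ and subtracting $\ut$ from both sides gives
\begin{equation*}
\bar v_n = \sum_{k=0}^{n-1}\frac{n-k}{n}(I-\calK A)^k \calK \bar\eta_{n-k} + \mathcal{D},
\end{equation*}
where the noise contribution is already in the required form, and the deterministic remainder is
\begin{equation*}
\mathcal{D} = \sum_{k=0}^{n-1}\frac{n-k}{n}(I-\calK A)^k \calK A\, \ut + \sum_{k=0}^{n-1}\frac{1}{n}(I-\calK A)^k(I-\calK A) u_0 - \ut.
\end{equation*}
The entire task reduces to showing $\mathcal{D} = \sum_{k=0}^{n-1}\frac{1}{n}(I-\calK A)^k(I-\calK A) v_0$, where $v_0 = u_0 - \ut$.

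Writing $M := I - \calK A$, so that $\calK A = I - M$, and splitting $u_0 = v_0 + \ut$, the $v_0$ part of the second sum cancels the right-hand side exactly. After reindexing the initial-condition contribution via $\sum_{k=0}^{n-1} M^{k+1} = \sum_{m=1}^n M^m$, it therefore suffices to establish the operator identity
\begin{equation*}
\sum_{k=0}^{n-1}\frac{n-k}{n}\, M^k(I-M) + \frac{1}{n}\sum_{m=1}^n M^m = I.
\end{equation*}

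This identity follows by Abel summation. Setting $c_k := (n-k)/n$, expanding $M^k(I-M) = M^k - M^{k+1}$, and shifting indices in the second piece yields
\begin{equation*}
\sum_{k=0}^{n-1} c_k M^k(I-M) = c_0 I + \sum_{k=1}^{n-1}(c_k - c_{k-1}) M^k - c_{n-1} M^n = I - \frac{1}{n}\sum_{m=1}^n M^m,
\end{equation*}
where I have used $c_0 = 1$, $c_{n-1} = 1/n$, and $c_k - c_{k-1} = -1/n$. Adding $\tfrac{1}{n}\sum_{m=1}^n M^m$ recovers $I$, which completes the proof. The only subtle point is the careful bookkeeping of boundary indices in the telescoping; beyond the preceding lemma, no analytic content is required.
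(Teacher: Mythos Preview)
Your proof is correct. The paper states this as an immediate corollary without proof; the quickest route is to observe that subtracting $\ut$ from the 3DVAR recursion gives $v_n = \calK\eta_n + (I-\calK A)v_{n-1}$, which is the same affine recursion as for $u_n$ with $\eta_n$ in place of $y_n$ and $v_0$ in place of $u_0$, so the preceding lemma applies verbatim. Your approach---substituting $\bar y_{n-k} = A\ut + \bar\eta_{n-k}$ into the $\bar u_n$ formula and then verifying the operator identity $\sum_{k=0}^{n-1}\tfrac{n-k}{n}M^k(I-M) + \tfrac{1}{n}\sum_{m=1}^n M^m = I$ by Abel summation---is a more explicit unpacking of the same content. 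The telescoping bookkeeping is handled correctly.
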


\begin{rem}
    \label{r:vbar}
    As this is a linear problem, it will be sufficient to study the behavior of $\bar v_n$ to infer convergence of $\bar u_n$ to $\ut$.
\end{rem}

For the analysis of 3DVAR, the essential decomposition into bias and variance terms can be read off of  Corollary \ref{c:vbariter}.  These can be expressed in the more useful forms using $q_{n,\alpha}$:
\begin{lem}
\label{l:avgident2}
\begin{align}
    \label{e:bias2}
    \bar I_n^{\rm bias} & =   \sum_{k=0}^{n-1}\frac{1}{n}(I - \calK A)^k (I - \calK A)v_0 = \frac{\alpha}{n}\Sigma^{\frac{1}{2}} q_{n,\alpha}(B^\ast B) \Sigma^{\frac{1}{2}} v_0,\\
    \label{e:var2}
    \bar I_n^{\rm var} & = \sum_{k=0}^{n-1}\frac{n-k}{n}(I-\calK A)^{k}\calK \bar\eta_{n-k} = \frac{1}{n}\sum_{j=1}^n { \Sigma^{\frac{1}{2}}q_{n-j+1,\alpha}(B^\ast B) B^\ast}\eta_j.
\end{align}

\end{lem}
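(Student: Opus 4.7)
The plan is to reduce both identities to spectral calculus for $B^\ast B$ via a single conjugation formula relating $(I - \calK A)$ to the rational function $r_{1,\alpha}(B^\ast B)$. The crucial algebraic observation is the symmetric factorization
\begin{equation*}
A^\ast A + \alpha \Sigma^{-1} = \Sigma^{-\frac{1}{2}}(B^\ast B + \alpha I)\Sigma^{-\frac{1}{2}},
\end{equation*}
which, upon inversion, gives $(A^\ast A + \alpha\Sigma^{-1})^{-1} = \Sigma^{\frac{1}{2}}(B^\ast B + \alpha I)^{-1}\Sigma^{\frac{1}{2}}$. From this one reads off two identities to be used throughout:
\begin{equation*}
\calK = \Sigma^{\frac{1}{2}}(B^\ast B + \alpha I)^{-1} B^\ast, \qquad (I - \calK A) = \Sigma^{\frac{1}{2}} r_{1,\alpha}(B^\ast B)\, \Sigma^{-\frac{1}{2}},
\end{equation*}
the second following from $(I - \calK A) = \alpha(A^\ast A + \alpha\Sigma^{-1})^{-1}\Sigma^{-1}$. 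Iterating telescopes the conjugation: $(I - \calK A)^k = \Sigma^{\frac{1}{2}} r_{k,\alpha}(B^\ast B)\,\Sigma^{-\frac{1}{2}}$.

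For the bias identity \eqref{e:bias2}, I would first re-index so that $\sum_{k=0}^{n-1}(I-\calK A)^k(I-\calK A) = \sum_{k=1}^n (I-\calK A)^k$. Substituting the factored form above pulls the factors of $\Sigma^{\pm \frac{1}{2}}$ outside the sum, leaving $\Sigma^{\frac{1}{2}}\bigl(\sum_{k=1}^n r_{k,\alpha}(B^\ast B)\bigr)\Sigma^{-\frac{1}{2}} v_0$. The partial-sum identity \eqref{e:rqrelation}, $\sum_{k=1}^n r_{k,\alpha}(\lambda) = \alpha q_{n,\alpha}(\lambda)$, applied through the spectral calculus of $B^\ast B$ then collapses this to $\frac{\alpha}{n}\Sigma^{\frac{1}{2}}q_{n,\alpha}(B^\ast B)\Sigma^{-\frac{1}{2}}v_0$, matching the stated form.

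For the variance identity \eqref{e:var2}, the strategy is to expand $\bar\eta_{n-k} = (n-k)^{-1}\sum_{j=1}^{n-k}\eta_j$ so that the weight $(n-k)/n$ cancels, leaving a double sum $n^{-1}\sum_{k=0}^{n-1}\sum_{j=1}^{n-k}(I-\calK A)^k \calK \eta_j$. Swapping the order of summation regroups the contributions by observation index as $n^{-1}\sum_{j=1}^n\sum_{k=0}^{n-j}(I-\calK A)^k \calK \eta_j$. A short computation using the two factored identities above yields
\begin{equation*}
(I-\calK A)^k\calK = \Sigma^{\frac{1}{2}} r_{k,\alpha}(B^\ast B)(B^\ast B + \alpha I)^{-1} B^\ast = \tfrac{1}{\alpha}\Sigma^{\frac{1}{2}} r_{k+1,\alpha}(B^\ast B)\, B^\ast,
\end{equation*}
since $r_{k,\alpha}(\lambda)(\lambda+\alpha)^{-1} = \alpha^{-1}r_{k+1,\alpha}(\lambda)$. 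The inner $k$-sum then telescopes via \eqref{e:rqrelation} to $\Sigma^{\frac{1}{2}} q_{n-j+1,\alpha}(B^\ast B) B^\ast$, which is exactly the claimed expression.

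The only subtlety worth flagging is domain-theoretic: the conjugation identity for $(I-\calK A)^k$ is initially formal, since $\Sigma^{-\frac{1}{2}}$ is unbounded. Under Assumption~\ref{a1} and Proposition~\ref{p:hilbertscale}, however, the image of $(I-\calK A)$ is contained in $\Ran(\Sigma^{\frac{1}{2}})$ and the initial error $v_0$ lies in the relevant dense domain for the values of $s$ used; moreover, in the variance expression the factor $B^\ast$ on the right, together with the boundedness of $q_{n-j+1,\alpha}(B^\ast B)$ on $[0,\|B\|^2]$, ensures the expression extends by density to all of $X$. Once these domain checks are in place, every remaining step is routine manipulation of the scalar rational functions $r_{k,\alpha}$ and $q_{m,\alpha}$.
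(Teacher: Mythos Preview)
Your proof is correct and follows essentially the same route as the paper: the conjugation identity $(I-\calK A)=\Sigma^{1/2}r_{1,\alpha}(B^\ast B)\Sigma^{-1/2}$, the re-indexing/Fubini swap, and the telescoping via \eqref{e:rqrelation} are exactly the steps the paper uses. Your version is in fact slightly more explicit (the factorization of $\calK$ and the domain discussion), and you correctly obtain $\Sigma^{-1/2}v_0$ in the bias term, which is what the paper's own proof derives—the $\Sigma^{1/2}v_0$ in the displayed statement is a typo.
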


\begin{proof}

First, observe that
\begin{equation*}
    I - \mathcal{K}A = \Sigma^{\frac{1}{2}}\alpha(\alpha I + B^\ast B)\Sigma^{-\frac{1}{2}}.
\end{equation*}
Using this in \eqref{e:bias2} together with spectral calculus applied to positive self-adjoint compact operator $B^\ast B$, along with \eqref{e:rqrelation},
\begin{equation*}
\begin{split}
    \bar{I}_n^{\bias} &= \frac{1}{n}\sum_{k=0}^{n-1}\Sigma^{{1}/{2}} \alpha^k (\alpha I + B^\ast B)^{-{k+1}}\Sigma^{-{1}/{2}}_0v_0\\
    &=\frac{1}{n} \sum_{k=1}^n \Sigma^{{1}/{2}} r_{k,\alpha}(B^\ast B) \Sigma^{-{1}/{2}}v_0 = \frac{\alpha}{n} \Sigma^{\frac{1}{2}} q_{n,\alpha}(B^\ast B) \Sigma^{-\frac{1}{2}} v_0.
\end{split}
\end{equation*}
Applying the same computations to \eqref{e:var2}, we have,
\begin{equation*}
\begin{split}
    \bar I_n^{\var} &= \sum_{k=0}^{n-1} \frac{n-k}{n}\alpha^{-1}\Sigma^{\frac{1}{2}}_0 r_{k+1,\alpha}(B^\ast B) B^\ast \bar \eta_{n-k}\\
    & = \frac{1}{n}\sum_{j=1}^n \set{\sum_{k=0}^{n-j}\alpha^{-1} \Sigma^{\frac{1}{2}}r_{k+1,\alpha}(B^\ast B) B^\ast}\eta_j = \frac{1}{n}\sum_{j=1}^n { \Sigma^{\frac{1}{2}}q_{n-j+1,\alpha}(B^\ast B) B^\ast}\eta_j.
\end{split}
\end{equation*}
\end{proof}

\section{Analysis of the Scalar Problem}

\label{s:scalar}

Before studying the general, infinite-dimensional case, it is instructive
to consider the scalar problem, where $X = Y = \R$ and $A$, $\Sigma$, and $\mathcal{K}$ are now scalars.  This setting will also allow us to establish the limitations of both 3DVAR and the Kalman filter.

\subsection{3DVAR}

First, we prove Theorem \ref{t:3dvarnoconverge} which asserts that the 3DVAR iteration cannot converge in mean square:
\begin{proof}
Since $y_n\sim \mathcal{N}(Au^\dagger, \gamma^2)$, we write $y_n = Au^\dagger + \eta_n$ for $\eta_n \sim \mathcal{N}(0,\gamma^2)$. By \eqref{e:3DVAR},
\begin{equation*}
    \begin{split}
        u_n-u^\dagger &= \calK \eta_n + \calK A\ut + (1-\calK A)u_{n-1} - \ut\\
&= \calK \eta_n + (1-\calK A)(u_{n-1} - \ut).
    \end{split}
\end{equation*}
Consequently,
\begin{equation*}
\begin{split}
\E[|u_n-u^\dagger|^2] &= \E[|\calK \eta_n|^2] + \E[|(1-\calK A)(u_{n-1} - \ut)|^2]\\
&\geq \E[|\calK \eta_n|^2] = \calK^2\gamma^2.
\end{split}
\end{equation*}
\end{proof}

Next, studying the bias and variance of the time averaged problem, given by \eqref{e:bias2} and \eqref{e:var2}, we prove
\begin{thm}
	\label{scalarbound}
For scalar time averaged 3DVAR, for $\tau_{\rm b}, \tau_{\rm v}\in [0,1]$
\begin{equation*}
    \E[|\bar u_n - \ut|^2]\leq { (A^2\Sigma)^{-2 \tau_{\rm b}}} |v_0|^2 \paren{\frac{n}{\alpha}}^{-2 \tau_{\rm b}} +\frac{\Sigma \gamma^2}{\alpha}(A^2\Sigma )^{-\tau_{\rm v}}\paren{\frac{n}{\alpha}}^{-\tau_{\rm v}}.
\end{equation*}
\end{thm}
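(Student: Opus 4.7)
The plan is to apply the bias--variance decomposition from Lemma \ref{l:avgident2} and then reduce everything to two invocations of Lemma \ref{l:qbounds} with carefully chosen exponents. In the scalar setting the operator $B^\ast B$ collapses to the single positive number $\lambda := A^2 \Sigma$, and $B^\ast = B = A\Sigma^{1/2}$, so Lemma \ref{l:avgident2} gives the explicit expressions
\begin{align*}
\bar I_n^{\rm bias} &= \frac{\alpha}{n}\, q_{n,\alpha}(\lambda)\, v_0, \\
\bar I_n^{\rm var} &= \frac{A\Sigma}{n}\sum_{j=1}^n q_{n-j+1,\alpha}(\lambda)\, \eta_j.
\end{align*}
By Remark \ref{r:vbar} we have $\bar u_n - \ut = \bar I_n^{\rm bias} + \bar I_n^{\rm var}$; the cross term vanishes in expectation because the $\eta_j$ are zero-mean and independent of $v_0$, so $\E[|\bar u_n - \ut|^2] = |\bar I_n^{\rm bias}|^2 + \E[|\bar I_n^{\rm var}|^2]$.

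For the bias I would factor one power of $\lambda$ in and out, writing $\frac{\alpha}{n}\,q_{n,\alpha}(\lambda) = \lambda^{-\tau_{\rm b}} \cdot \frac{\alpha}{n} \cdot \lambda^{\tau_{\rm b}}q_{n,\alpha}(\lambda)$, and then invoke Lemma \ref{l:qbounds} with $p=\tau_{\rm b}\in[0,1]$ to obtain $\lambda^{\tau_{\rm b}}q_{n,\alpha}(\lambda) \leq (n/\alpha)^{1-\tau_{\rm b}}$. Squaring produces the claimed bias bound $\lambda^{-2\tau_{\rm b}}(n/\alpha)^{-2\tau_{\rm b}}|v_0|^2 = (A^2\Sigma)^{-2\tau_{\rm b}}(n/\alpha)^{-2\tau_{\rm b}}|v_0|^2$.

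For the variance, independence of the $\{\eta_j\}$ gives, after reindexing $k=n-j+1$,
\begin{equation*}
\E[|\bar I_n^{\rm var}|^2] = \frac{\gamma^2 A^2 \Sigma^2}{n^2}\sum_{k=1}^n q_{k,\alpha}(\lambda)^2 = \frac{\gamma^2 \Sigma \lambda}{n^2}\sum_{k=1}^n q_{k,\alpha}(\lambda)^2.
\end{equation*}
Because $r_{k,\alpha}(\lambda)$ is decreasing in $k$, $q_{k,\alpha}(\lambda)$ is increasing in $k$, so each summand is at most $q_{n,\alpha}(\lambda)^2$ and the sum is bounded by $n\,q_{n,\alpha}(\lambda)^2$. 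I then apply Lemma \ref{l:qbounds} with $p=(1+\tau_{\rm v})/2 \in [1/2,1]$ to the identity $\lambda\, q_{n,\alpha}(\lambda)^2 = \lambda^{-\tau_{\rm v}}\bracket{\lambda^{(1+\tau_{\rm v})/2}q_{n,\alpha}(\lambda)}^2$, yielding $\lambda\, q_{n,\alpha}(\lambda)^2 \leq \lambda^{-\tau_{\rm v}}(n/\alpha)^{1-\tau_{\rm v}}$. The elementary simplification $\tfrac{1}{n}(n/\alpha)^{1-\tau_{\rm v}} = \tfrac{1}{\alpha}(n/\alpha)^{-\tau_{\rm v}}$ then delivers the stated variance term.

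No step here presents a genuine obstacle; the only thing to notice is the correct choice of exponent in Lemma \ref{l:qbounds} for each piece ($p=\tau_{\rm b}$ for the bias, $p=(1+\tau_{\rm v})/2$ for the variance, the latter being forced by the presence of a square). The scalar case is clean precisely because the spectral calculus collapses to manipulations of a single number $\lambda = A^2\Sigma$, which is the template we will later need to apply spectrally in the infinite-dimensional setting.
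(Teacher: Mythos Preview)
Your proof is correct and follows the same bias--variance decomposition and applications of Lemma~\ref{l:qbounds} as the paper. The only cosmetic difference is in the variance step: you invoke monotonicity of $q_{k,\alpha}(\lambda)$ in $k$ before a single application of the lemma, whereas the paper applies Lemma~\ref{l:qbounds} to each $q_{j,\alpha}$ separately and then bounds $\sum_{j=1}^n (j/\alpha)^{1-\tau_{\rm v}} \leq n\,(n/\alpha)^{1-\tau_{\rm v}}$ --- the two routes give the identical constant, though the paper's ordering is the one that extends verbatim to the infinite-dimensional proofs in Section~\ref{s:infinite}.
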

Thus, we have unconditional convergence for any choice for $\alpha>0$, something that we do not have for 3DVAR without any iterate averaging.  The rate of convergence is greatest when $\tau_{\rm b}\geq 1/2$ and $\tau_{\rm v}=1$. 

To obtain the result, we make use of the bias variance decomposition and expressions \eqref{e:bias2} and \eqref{e:var2}.  In the scalar case, $B^\ast B = B^2 = \Sigma A^2$, so that 
\begin{equation}
\label{e:bias1d2}
    \abs{\bar I_n^{\bias}}^2 = \paren{\frac{n}{\alpha}}^{-2}q_{n,\alpha}(\Sigma A^2)^2 |v_0|^2.
\end{equation}
Applying Lemma \ref{l:qbounds} to this expression, we immediately obtain
\begin{prop}
\label{p:bias1dbound}
For $0\leq \tau_{\rm b} \leq 1$,
\begin{equation}
\abs{\bar I_n^{\bias}}^2\leq  { (A^2\Sigma )^{-2 \tau_{\rm b}}} |v_0|^2\paren{\frac{n}{\alpha}}^{-2 \tau_{\rm b}}.
\end{equation}
\end{prop}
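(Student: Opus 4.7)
The proposition is an essentially immediate consequence of the explicit formula \eqref{e:bias1d2} together with Lemma \ref{l:qbounds}. The plan is to write
\begin{equation*}
q_{n,\alpha}(\Sigma A^2) = (\Sigma A^2)^{-\tau_{\rm b}} \bigl[(\Sigma A^2)^{\tau_{\rm b}} q_{n,\alpha}(\Sigma A^2)\bigr],
\end{equation*}
so that the bracketed quantity matches the form $\lambda^{p} q_{n,\alpha}(\lambda)$ appearing in the first estimate of Lemma \ref{l:qbounds} with $p = \tau_{\rm b} \in [0,1]$.

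Applying that estimate gives $(\Sigma A^2)^{\tau_{\rm b}} q_{n,\alpha}(\Sigma A^2) \leq (n/\alpha)^{1-\tau_{\rm b}}$, and hence
\begin{equation*}
q_{n,\alpha}(\Sigma A^2)^2 \leq (\Sigma A^2)^{-2\tau_{\rm b}} \paren{\frac{n}{\alpha}}^{2 - 2\tau_{\rm b}}.
\end{equation*}
Substituting this into \eqref{e:bias1d2}, the factor $(n/\alpha)^{-2}$ combines with $(n/\alpha)^{2-2\tau_{\rm b}}$ to leave $(n/\alpha)^{-2\tau_{\rm b}}$, which yields exactly the claimed bound.

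There is no genuine obstacle here; the only thing to be careful about is ensuring that the exponent splitting respects the hypothesis $\tau_{\rm b} \in [0,1]$, which is precisely the range in which the first case of Lemma \ref{l:qbounds} applies. The whole argument should fit in two or three lines of display math.
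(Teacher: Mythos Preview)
Your proposal is correct and matches the paper's approach exactly: the paper simply states that the result follows by applying Lemma~\ref{l:qbounds} to \eqref{e:bias1d2}, and your argument spells out precisely how that application goes. There is nothing to add.
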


For the variance, we have the result
\begin{prop}
\label{p:var1dbound}
Let $\tau_{\rm v}\in [0,1]$,
\begin{equation}
    \E[|\bar{I}_n^{\var}|^2]\leq \frac{\Sigma \gamma^2}{\alpha}(A^2\Sigma )^{-\tau_{\rm v} }\paren{\frac{n}{\alpha}}^{-\tau_{\rm v}}.
\end{equation}
\end{prop}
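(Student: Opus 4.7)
The plan is to start from the variance identity \eqref{e:var2} in Lemma \ref{l:avgident2}. In the scalar case $B^\ast B = \Sigma A^2$ and $B^\ast = \sqrt{\Sigma}\,A$, so the identity reduces to
\[
\bar I_n^{\rm var} \;=\; \frac{\Sigma A}{n}\sum_{j=1}^{n} q_{n-j+1,\alpha}(\Sigma A^2)\,\eta_j.
\]
Since the $\eta_j$ are i.i.d.\ $N(0,\gamma^2)$, their contributions are orthogonal in $L^2$, and after re-indexing $k = n-j+1$ we obtain
\[
\E\bigl[|\bar I_n^{\rm var}|^2\bigr] \;=\; \frac{\Sigma^2 A^2 \gamma^2}{n^2}\sum_{k=1}^{n} q_{k,\alpha}(\Sigma A^2)^2.
\]

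The key step is an asymmetric bound on the integrand. Writing one factor $q_{k,\alpha}$ via the $p=\tau_{\rm v}\in[0,1]$ bound from Lemma \ref{l:qbounds}, and the other factor via the bound $\lambda q_{k,\alpha}(\lambda)\le 1$ (i.e.\ $p=1$), yields
\[
q_{k,\alpha}(\Sigma A^2)^2 \;\leq\; (\Sigma A^2)^{-\tau_{\rm v}}\!\paren{\tfrac{k}{\alpha}}^{1-\tau_{\rm v}}\cdot (\Sigma A^2)^{-1} \;=\; (\Sigma A^2)^{-1-\tau_{\rm v}}\paren{\tfrac{k}{\alpha}}^{1-\tau_{\rm v}}.
\]
The sum is then elementary: $\sum_{k=1}^{n} k^{1-\tau_{\rm v}}\leq n^{2-\tau_{\rm v}}$ (up to a constant $1/(2-\tau_{\rm v})\leq 1$). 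Plugging in and simplifying,
\[
\E\bigl[|\bar I_n^{\rm var}|^2\bigr] \;\leq\; \frac{\Sigma^2 A^2 \gamma^2}{n^2}\,(\Sigma A^2)^{-1-\tau_{\rm v}}\,\alpha^{\tau_{\rm v}-1}\, n^{2-\tau_{\rm v}} \;=\; \frac{\Sigma \gamma^2}{\alpha}(A^2\Sigma)^{-\tau_{\rm v}}\!\paren{\tfrac{n}{\alpha}}^{-\tau_{\rm v}},
\]
which is exactly the claimed bound.

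The only nontrivial step is the asymmetric split of $q_{k,\alpha}^2$: using the $p=\tau_{\rm v}$ bound on both factors would produce a sum $\sum_k k^{2-2\tau_{\rm v}}$ that grows too fast and misses the optimal rate by a factor of $n$, while using $q_{k,\alpha}(\lambda)\le 1/\lambda$ on both loses the $(n/\alpha)^{-\tau_{\rm v}}$ gain entirely. Interpolating — one factor trades power of $\lambda$ for a power of $n/\alpha$, the other absorbs the remaining power of $\lambda$ — is what delivers both the correct prefactor and the correct decay in $n$.
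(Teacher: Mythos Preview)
Your proof is correct and follows essentially the same route as the paper: start from the scalar form of \eqref{e:var2}, use independence to get $\E[|\bar I_n^{\var}|^2]=\frac{\gamma^2\Sigma^2A^2}{n^2}\sum_k q_{k,\alpha}(A^2\Sigma)^2$, bound each $q_{k,\alpha}^2$ via Lemma \ref{l:qbounds}, and sum. The only cosmetic difference is how the exponent is distributed: the paper applies Lemma \ref{l:qbounds} symmetrically with $p=(1+\tau_{\rm v})/2$ on each copy of $q_{k,\alpha}$, i.e.\ $[(A^2\Sigma)^{(1+\tau_{\rm v})/2}q_{k,\alpha}]^2\le (k/\alpha)^{1-\tau_{\rm v}}$, whereas you split asymmetrically as $p=\tau_{\rm v}$ and $p=1$; both deposit the same total power $1+\tau_{\rm v}$ on $\lambda$ and recover the same factor $(k/\alpha)^{1-\tau_{\rm v}}$, so the two arguments are equivalent.
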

\begin{proof}

For the scalar case of \eqref{e:var2},using Lemma \ref{l:qbounds},
\begin{equation*}
\begin{split}
    \E[|\bar{I}_n^{\var}|^2]  &= \frac{\gamma^2(A\Sigma)^2}{n^2}\sum_{j=1}^n q_{j,\alpha}(A^2 \Sigma)^2\\
    &=\frac{ \gamma^2 \Sigma}{n^2}(A^2\Sigma)^{1-({1+\tau_{\rm v}})}\sum_{j=1}^n \bracket{(A^2 \Sigma)^{\frac{1+\tau_{\rm v}}{2}} q_{j,\alpha}(A^2 \Sigma)}^2 \\
    &\leq \frac{\Sigma \gamma^2 }{n^2}(A^2\Sigma)^{-\tau_{\rm v}}\sum_{j=1}^n\paren{\frac{j}{\alpha}}^{2\left (1-{\frac{1+\tau_{\rm v}}{2}}\right)}\\
    &\leq \frac{\Sigma \gamma^2 (A^2\Sigma)^{-\tau_{\rm v}}}{n^2} n \paren{\frac{n}{\alpha}}^{1-\tau_{\rm v}} = \frac{\Sigma \gamma^2}{\alpha} (A^2\Sigma)^{-\tau_{\rm v}}\paren{\frac{n}{\alpha}}^{-\tau_{\rm v}}.
\end{split}
\end{equation*}

\end{proof}

\begin{proof}[Proof of Theorem \ref{scalarbound}]
The result then follows immediately by combining the two preceding propositions.

\end{proof}

\subsection{Kalman Filter}

Here, we prove Theorem \ref{t:kalmaniter}, showing there is no improvement in mean squared convergence of Kalman under iterate averaging.

\begin{proof}
Using Lemma \ref{l:KalmanIdent}, for the $k$-the estimate of the mean,
\begin{equation*}
\begin{split}
    m_k &= \paren{\frac{\alpha}{\Sigma k} + a^2}^{-1}\paren{A\bar y_k + \frac{\alpha}{\Sigma k}m_0}\\
    & = \paren{\frac{\alpha}{\Sigma k} + A^2}^{-1}\paren{A^2\ut + A\bar \eta_k + \frac{\alpha}{\Sigma k}m_0}\\
    & =\paren{1 + \frac{\alpha}{A^2 \Sigma k}}^{-1}\ut + \paren{1 + \frac{A^2 \Sigma k}{\alpha}}^{-1}m_0 + \paren{A + \frac{\alpha}{A \Sigma k}}^{-1} \bar \eta_k.
\end{split}
\end{equation*}
and without averaging,
\begin{align*}
    \E[m_n] - \ut & =  \paren{1 + \frac{A^2 \Sigma n}{\alpha}}^{-1}(m_0 - \ut),\\
    \Var(m_n) & = \paren{{A + \frac{\alpha}{A \Sigma n}}}^{-2} \frac{\gamma^2}{n}.
\end{align*}
Then, with averaging, for the bias,
\begin{equation*}
\E[\bar m_n] -\ut = \frac{1}{n}\sum_{k=1}^n \paren{1 + \frac{A^2\Sigma k}{\alpha}}^{-1}(m_0- \ut),
\end{equation*}
and
\begin{equation*}
\begin{split}
    |\E[\bar m_n] -\ut|^2 &= \abs{\frac{1}{n}\sum_{k=1}^n \paren{1 + \frac{A^2\Sigma k}{\alpha}}^{-1}}^2|m_0- \ut|^2\\
    &\geq \abs{\frac{1}{n}\sum_{k=1}^n \paren{1 + \frac{A^2\Sigma n}{\alpha}}^{-1}}^2|m_0- \ut|^2 = |\E[m_n] - \ut|^2.
\end{split}
\end{equation*}
For the variance, first note
\begin{equation*}
\begin{split}
    \bar{m}_n - \E[\bar{m}_n] = \frac{1}{n}\sum_{k=1}^n \paren{A + \frac{\alpha}{A\Sigma k}}^{-1}\bar \eta_k &= \frac{1}{n}\sum_{k=1}^n \paren{A + \frac{\alpha}{A\Sigma k}}^{-1}\set{\sum_{j=1}^k \eta_j}\\
    &= \frac{1}{n}\sum_{j=1}^n \eta_j\set{\sum_{k=j}^n \paren{A + \frac{\alpha}{A\Sigma k}}^{-1}}.
\end{split}
\end{equation*}
Then, by dropping all but the $k=n$-th term in the inner sum,
\begin{equation*}
\begin{split}
    \Var(\bar{m}_n) = \frac{1}{n^2}\sum_{j=1}^n\gamma^2 \set{\sum_{k=j}^n \paren{A + \frac{\alpha}{A\Sigma k}}^{-1}}^2&\geq \frac{1}{n^2}\sum_{j=1}^n\gamma^2 \paren{A + \frac{\alpha}{A\Sigma n}}^{-2}\\
    &\quad = \Var(m_n)
    \end{split}
\end{equation*}

\end{proof}

\section{Analysis of the Infinite Dimensional Problem}

\label{s:infinite}

We return to the bias and variance of 3DVAR in the general, potentially infinite dimensional, setting and obtain estimates on the terms.  We prove the general case in Section \ref{s:general}, and then the diagonal case in \ref{s:diagonal}.  Our minimax results are proven in Section \ref{s:minimax}.

\subsection{General Case}
\label{s:general}

Here, we prove Theorem \ref{t:msegeneral1} by first establishing results on the bias and variance.

\begin{prop}
\label{p:generalbias}
Under Assumption \ref{a1}, with $t \in [0,\nu]$,
\begin{equation*}
\label{e:generalbias}
\|\bar{I}_n^{\bias}\|_t^2\lesssim \paren{\frac{n}{\alpha}}^{-\frac{s+t}{1+\nu}}\|z_0\|^2
\end{equation*}
where $z_0$ solves \eqref{e:auxbias}.
\end{prop}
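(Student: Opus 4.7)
The plan is to start from the explicit representation of the bias established in Lemma~\ref{l:avgident2}, namely
\[
\bar I_n^{\bias} = \frac{\alpha}{n}\Sigma^{\frac{1}{2}} q_{n,\alpha}(B^\ast B)\Sigma^{-\frac{1}{2}} v_0,
\]
and then repeatedly apply (a) the Hilbert scale equivalence of Proposition~\ref{p:hilbertscale} to trade powers of $\Sigma$ for powers of $B^\ast B$, and (b) the spectral bounds on $\lambda^p q_{n,\alpha}(\lambda)$ from Lemma~\ref{l:qbounds}.

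First I would apply $\Sigma^{t/2}$ to the identity above to obtain
\[
\|\bar I_n^{\bias}\|_t = \frac{\alpha}{n}\bigl\|\Sigma^{\frac{t+1}{2}} q_{n,\alpha}(B^\ast B)\Sigma^{-\frac{1}{2}} v_0\bigr\|.
\]
Next I would insert the auxiliary variable $z_0$ from \eqref{e:auxbias}, replacing $\Sigma^{-1/2}v_0$ by $(B^\ast B)^{(s-1)/(2(1+\nu))}z_0$. Then I would use Proposition~\ref{p:hilbertscale} with exponent $\theta = (t+1)/(1+\nu)$, which lies in $[0,1]$ precisely because of the hypothesis $t\in[0,\nu]$. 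This exchanges $\Sigma^{(t+1)/2}$ for $(B^\ast B)^{(t+1)/(2(1+\nu))}$ up to a multiplicative constant, yielding
\[
\|\bar I_n^{\bias}\|_t \lesssim \frac{\alpha}{n}\bigl\|(B^\ast B)^{\frac{s+t}{2(1+\nu)}} q_{n,\alpha}(B^\ast B)z_0\bigr\|,
\]
after combining the two powers of $B^\ast B$.

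The last step is purely spectral: set $p=(s+t)/(2(1+\nu))$. Because $s\in[0,\nu+2]$ and $t\in[0,\nu]$, we have $p\in[0,1]$, which is exactly the regime in which the first bound of Lemma~\ref{l:qbounds} applies and gives $\lambda^{p}q_{n,\alpha}(\lambda)\leq(n/\alpha)^{1-p}$ uniformly in $\lambda\in\sigma(B^\ast B)$. Invoking spectral calculus and squaring produces the claimed rate $(n/\alpha)^{-(s+t)/(1+\nu)}\|z_0\|^2$.

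The only real subtlety is verifying that the exponents line up with the admissibility conditions of the tools used: $t\leq\nu$ keeps the Hilbert scale exponent $\theta\leq 1$, and the combined constraint $s+t\leq 2(1+\nu)$ keeps $p\leq 1$ so that the favorable $q_{n,\alpha}$ bound is available. Both are immediate from the statement of Assumption~\ref{a1} and the hypothesis $t\in[0,\nu]$, so no new estimates are required beyond those already collected in Section~\ref{s:prelim}.
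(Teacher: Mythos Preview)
Your proposal is correct and follows essentially the same route as the paper's own proof: start from the representation in Lemma~\ref{l:avgident2}, substitute $z_0$ via \eqref{e:auxbias}, apply Proposition~\ref{p:hilbertscale} with $\theta=(t+1)/(1+\nu)$ to convert $\Sigma^{(t+1)/2}$ into $(B^\ast B)^{(t+1)/(2(1+\nu))}$, combine exponents, and finish with the spectral bound of Lemma~\ref{l:qbounds} using $p=(s+t)/(2(1+\nu))\in[0,1]$. The only minor addition in the paper is that it explicitly justifies the existence of $z_0$ inside the proof by a second invocation of Proposition~\ref{p:hilbertscale} (with $\theta=(s-1)/(1+\nu)$), whereas you take it as given from the statement; either is acceptable.
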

The fastest possible decay available for the squared bias in  Proposition \ref{p:generalbias} is $\bigo(n^{-2})$ when $s = \nu+2$ and $t=\nu$. 
\begin{proof}
We make use of bias term from Lemma \ref{l:avgident2}, allowing us to write
\begin{equation*}
    \|\bar{I}_n^{\bias}\|_t^2 = \left\|\frac{\alpha}{n}\Sigma^{\frac{t+1}{2}}q_{n,\alpha}(B^\ast B) \Sigma^{-\frac{1}{2}} v_0\right\|^2.
\end{equation*}

Next, we make use of \eqref{e:Hscale} and argue as in the Appendix of \cite{Iglesias_Lin_Lu_Stuart_2017}, applying Proposition \ref{p:hilbertscale}.  Since, by assumption, $v_0 \in \Dom(\Sigma^{-\frac{s}{2}})$, $\Sigma^{-\frac{1}{2}} v_0 \in \Dom(\Sigma^{-\frac{s-1}{2}})$.  Then taking $\theta = (s-1)/(1+\nu)$ in the proposition, $\Sigma^{-\frac{1}{2}} v_0 \in \Ran((B^\ast B)^{\frac{s-1}{2(1+\nu)}})$ allows us to conclude the existence of $z_0$. Therefore, 
\begin{equation*}
    \|\bar{I}_n^{\bias}\|_t^2 = 
    \left\|\frac{\alpha}{n}\Sigma^{\frac{t+1}{2}}q_{n,\alpha}(B^\ast B) (B^\ast B)^{\frac{s-1}{2(1+\nu)}} z_0 \right\|^2.
\end{equation*}
Next, using Proposition \ref{p:hilbertscale} again, now with $\theta = (1+t)/(1+\nu)$,
\begin{equation*}
\begin{split}
    \|\bar{I}_n^{\bias}\|_t^2 &\lesssim \norm{\frac{\alpha}{n}(B^\ast B)^{\frac{t+1}{2(1+\nu)}}q_{n,\alpha}(B^\ast B) (B^\ast B)^{\frac{s-1}{2(1+\nu)}} z_0 }^2\\
    &\quad = \norm{\frac{\alpha}{n}(B^\ast B)^{\frac{s+t}{2(1+\nu)}}q_{n,\alpha}(B^\ast B)z_0}^2\\
    &\leq \paren{\sup_{0\leq\lambda\leq \|B^\ast B\|}\abs{\frac{\alpha}{n} \lambda^{\frac{s+t}{2(1+\nu)}}q_{n,\alpha}(\lambda) }}^{2}\|z_0\|^2\leq \paren{\frac{n}{\alpha}}^{-\frac{s+t}{1+\nu}} \|z_0\|^2.
    \end{split}
\end{equation*}
The last inequality holds since, $s\leq \nu+2$ and $t\leq \nu$, so that $0\leq s+t \leq s+\nu\leq 2\nu + 2$ allowing for the application of Lemma \ref{l:qbounds}.
\end{proof}

\begin{prop}
\label{p:generalvariance}
Under Assumption \ref{a1}, for $t\geq 0$, $\tau_{\rm v} \in [0, 1]$, and for this choice of $\tau_{\rm v}$ and $t$, assume $\Sigma^{(1+t)-\tau_{\rm v}(1+\nu)}$ is trace class. Then
\begin{equation*}
    \E[\|\bar I_n^{\var}\|_t^2]\lesssim \frac{\gamma^2}{\alpha}\Tr(\Sigma^{t+1 - \tau_{\rm v}(1+\nu)}) \paren{\frac{n}{\alpha}}^{-\tau_{\rm v}}.
\end{equation*}
\end{prop}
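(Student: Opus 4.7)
The plan is to start from the expression for $\bar I_n^{\var}$ in Lemma~\ref{l:avgident2}, use independence of the noise to reduce $\E[\|\bar I_n^{\var}\|_t^2]$ to a sum of traces, and then control each trace by combining the spectral estimates of Lemma~\ref{l:qbounds} with the Hilbert-scale comparison of Proposition~\ref{p:hilbertscale}.

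Setting $T_j = \Sigma^{1/2} q_{n-j+1,\alpha}(B^\ast B) B^\ast$, so that $\bar I_n^{\var} = n^{-1}\sum_j T_j \eta_j$, and using that the $\eta_j$ are independent, mean-zero, with covariance $\gamma^2 I$, the first move is the standard Gaussian trace identity
\begin{equation*}
\E\bracket{\|\bar I_n^{\var}\|_t^2} = \frac{\gamma^2}{n^2}\sum_{j=1}^n \Tr\paren{\Sigma^{t/2} T_j T_j^\ast \Sigma^{t/2}}.
\end{equation*}
Substituting the definition of $T_j$, collapsing commuting functions of $B^\ast B$, and using the cyclic property of the trace rewrites each summand, under the reindexing $k = n-j+1$, as $\Tr(\Sigma^{t+1}\psi_k(B^\ast B))$ with $\psi_k(\lambda) = \lambda\bracket{q_{k,\alpha}(\lambda)}^2$.

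The key analytic step is the bound on this trace. Applying Lemma~\ref{l:qbounds} at the exponent $p = (1+\tau_{\rm v})/2 \in [1/2, 1]$ yields the spectral inequality $\lambda^{1+\tau_{\rm v}}\bracket{q_{k,\alpha}(\lambda)}^2 \leq (k/\alpha)^{1-\tau_{\rm v}}$, i.e.\ the operator bound
\begin{equation*}
\psi_k(B^\ast B) \leq (k/\alpha)^{1-\tau_{\rm v}}(B^\ast B)^{-\tau_{\rm v}}.
\end{equation*}
Invoking Proposition~\ref{p:hilbertscale} at $\theta = -\tau_{\rm v} \in [-1,0]$ produces the Hilbert-scale comparison $(B^\ast B)^{-\tau_{\rm v}} \lesssim \Sigma^{-\tau_{\rm v}(1+\nu)}$. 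Chaining the two inequalities and sandwiching by the bounded positive operator $\Sigma^{(t+1)/2}$ gives
\begin{equation*}
\Tr\paren{\Sigma^{t+1}\psi_k(B^\ast B)} \lesssim (k/\alpha)^{1-\tau_{\rm v}}\Tr\paren{\Sigma^{t+1-\tau_{\rm v}(1+\nu)}},
\end{equation*}
finite by the standing trace-class hypothesis.

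To finish, I would sum in $k$: since $1-\tau_{\rm v} \geq 0$, $\sum_{k=1}^n(k/\alpha)^{1-\tau_{\rm v}} \lesssim \alpha^{\tau_{\rm v}-1}n^{2-\tau_{\rm v}}$, and multiplying by $\gamma^2/n^2$ recovers exactly $(\gamma^2/\alpha)(n/\alpha)^{-\tau_{\rm v}}\Tr(\Sigma^{t+1-\tau_{\rm v}(1+\nu)})$. I expect the main technical obstacle to be the rigorous handling of the Hilbert scale at a negative power, where $(B^\ast B)^{-\tau_{\rm v}}$ is unbounded: the chained operator inequality must be interpreted as a quadratic-form bound on the form domain of $\Sigma^{-\tau_{\rm v}(1+\nu)}$, and the trace-class hypothesis on $\Sigma^{t+1-\tau_{\rm v}(1+\nu)}$ is exactly what ensures $\Sigma^{(t+1)/2}$ maps into that form domain, so the sandwich and the resulting trace estimate are well-defined.
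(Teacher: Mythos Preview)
Your proof is correct and follows the same overall skeleton as the paper's: expand via Lemma~\ref{l:avgident2}, use independence and the Gaussian covariance to reduce to a sum of traces $\Tr(\Sigma^{t+1}\,\lambda\,q_{j,\alpha}^2(B^\ast B))$, bound each term via Lemma~\ref{l:qbounds} and the Hilbert-scale comparison, then sum $\sum_j (j/\alpha)^{1-\tau_{\rm v}}$.

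The one genuine difference is in how the Hilbert-scale comparison is invoked. You apply Proposition~\ref{p:hilbertscale} at the \emph{negative} exponent $\theta=-\tau_{\rm v}$, obtaining the form inequality $(B^\ast B)^{-\tau_{\rm v}}\lesssim \Sigma^{-\tau_{\rm v}(1+\nu)}$ between unbounded operators, and then sandwich by $\Sigma^{(t+1)/2}$; as you correctly note, the trace-class hypothesis on $\Sigma^{t+1-\tau_{\rm v}(1+\nu)}$ forces the exponent to be nonnegative, so $\Ran(\Sigma^{(t+1)/2})\subset\Dom(\Sigma^{-\tau_{\rm v}(1+\nu)/2})$ and the sandwich is legitimate. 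The paper instead splits off $\Sigma^{\tau_{\rm v}(1+\nu)}$ from $\Sigma^{t+1}$ first and applies Proposition~\ref{p:hilbertscale} at the \emph{positive} exponent $\theta=+\tau_{\rm v}$, so that the comparison $\|\Sigma^{\tau_{\rm v}(1+\nu)/2}x\|\lesssim\|(B^\ast B)^{\tau_{\rm v}/2}x\|$ is between bounded operators; it then uses the elementary bound $|\Tr(CT)|\le\|T\|\,\Tr C$ rather than operator monotonicity of the trace. The paper's route sidesteps the form-domain discussion entirely, while yours is equally valid once that discussion is supplied; the resulting estimates and constants are the same.
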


\begin{rem}
\label{r:generalvar}
The fastest possible decay in the variance will be $\bigo(n^{-1})$ when $\tau_{\rm v} = 1$ and $t$ is sufficiently large such that $\Sigma^{t -\nu}$ is trace class.  However, the bias term requires $t\leq \nu$.  This requires the identity operator to be trace class which will not hold in infinite dimensions.
\end{rem}

\begin{proof}[Proof of Proposition \ref{p:generalvariance}]

We begin with equation \eqref{e:var2} and using that for any bounded operator $T$ and positive self adjoint trace class operator $C$, $|\Tr(CT)|\leq \|T\||\Tr C|$,
\begin{equation*}
\begin{split}
    \E[\|\bar I_n^{\var}\|_t^2] &= \frac{1}{n^2}\sum_{j=1}^n \E[\|{ \Sigma^{\frac{t+1}{2}}q_{n-j+1,\alpha}(B^\ast B) B^\ast}\eta_j\|^2]\\
    &=\frac{\gamma^2}{n^2}\sum_{j=1}^n \Tr\paren{ \Sigma^{\frac{t+1}{2}}q_{j,\alpha}(B^\ast B) (B^\ast B) q_{j,\alpha}(B^\ast B)\Sigma^{\frac{t+1}{2}}}\\
    &=\frac{\gamma^2}{n^2}\sum_{j=1}^n \Tr\paren{ \Sigma^{t+1 - \tau_{\rm v}(1+\nu)} \paren{\Sigma^{\tau_{\rm v}\frac{1+\nu}{2}} (B^\ast B)^{\frac{1}{2}}q_{j,\alpha}(B^\ast B) (B^\ast B)}^2}\\
    &\leq \frac{\gamma^2}{n^2}\sum_{j=1}^n \|{\Sigma^{\tau_{\rm v}\frac{1+\nu}{2}} (B^\ast B)^{\frac{1}{2}}q_{j,\alpha}(B^\ast B) (B^\ast B)}\|^2 \Tr(\Sigma^{t+1 - \tau_{\rm v}(1+\nu)}).
    \end{split}
\end{equation*}
Using Proposition \ref{p:hilbertscale} with $\theta = \tau_{\rm v}$ and Lemma \ref{l:qbounds},
\begin{equation*}
\begin{split}
  \|{\Sigma^{\tau_{\rm v} \frac{1+\nu}{2}} (B^\ast B)^{\frac{1}{2}}q_{j,\alpha}(B^\ast B) (B^\ast B)}\| &\lesssim \| (B^\ast B)^{\frac{1+\tau_{\rm v}}{2}} q_{j,\alpha}(B^\ast B)\|\\
    &\lesssim \sup_{\lambda \in [0, \|B^\ast B\|]} \lambda^{\frac{1+\tau_{\rm v}}{2}} q_{j,\alpha}(\lambda) \lesssim \paren{\frac{j}{\alpha}}^{1 - \frac{1+\tau_{\rm v}}{2}}
    \end{split}
\end{equation*}
Therefore, 
\begin{equation*}
    \begin{split}
        \E[\|\bar I_n^{\var}\|_t^2]  & \lesssim \frac{\gamma^2}{n^2}\Tr(\Sigma^{t+1 - \tau_{\rm v}(1+\nu)})\sum_{j=1}^{n} \paren{\frac{j}{\alpha}}^{1- \tau_{\rm v}}\lesssim \frac{\gamma^2}{\alpha} \Tr(\Sigma^{t+1 - \tau_{\rm v}(1+\nu)}) \paren{\frac{n}{\alpha}}^{-\tau_{\rm v}}
    \end{split}
\end{equation*}

\end{proof}

\begin{proof}[Proof of Theorem \ref{t:msegeneral1}]
The theorem immediately follows from the two preceding propositions.
\end{proof}

\subsection{Simultaneous Diagonalization}
\label{s:diagonal}

A sharper result is available under the simultaneous diagonalization Assumption \ref{a2p} .  
For convenience, letting
\begin{equation}
\label{e:omega1}
    \omega = \frac{1+2\eps}{1+2\eps +2p},
\end{equation}
we have the relationship
\begin{equation}
\label{e:omega2}
    \sigma_i \asymp (\sigma_i a_i^2)^{\omega}.
\end{equation}

\begin{prop}
\label{p:diagbias}
Under Assumption \ref{a2p}, let $\tau_{\rm b}\in [0,1]$ satisfy condition \eqref{e:taucondb}, 
\begin{equation*}
    \norm{\bar{I}_n^{\bias}}_t^2 \lesssim \paren{\frac{n}{\alpha}}^{-2\tau_{\rm b}}.
\end{equation*}
\end{prop}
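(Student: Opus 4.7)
The plan is to pass to the shared eigenbasis $\{\varphi_i\}$ of $\Sigma$ and $A^\ast A$, where $B^\ast B = \Sigma^{1/2}A^\ast A\Sigma^{1/2}$ is diagonal with eigenvalues $\lambda_i = \sigma_i a_i^2 \asymp i^{-(1+2\eps+2p)}$ and where the $\Sigma^{\pm 1/2}$ factors in the expression
\begin{equation*}
\bar I_n^{\bias} = \frac{\alpha}{n}\Sigma^{1/2} q_{n,\alpha}(B^\ast B)\Sigma^{-1/2}v_0
\end{equation*}
from Lemma \ref{l:avgident2} cancel mode-by-mode. Writing $v_0 = u_0 - u^\dagger = \sum v_{0,i}\varphi_i$, this gives immediately
\begin{equation*}
\|\bar I_n^{\bias}\|_t^2 = \frac{\alpha^2}{n^2}\sum_i \sigma_i^{t}\, q_{n,\alpha}(\lambda_i)^2 |v_{0,i}|^2.
\end{equation*}

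Next I would extract the summable Sobolev weight $\sum_i i^{2\beta}|v_{0,i}|^2$ furnished by \eqref{e:sobolev2} and absorb everything else into a single power of $\lambda_i$. Using $\sigma_i \asymp i^{-(1+2\eps)}$ and $i \asymp \lambda_i^{-1/(1+2\eps+2p)}$,
\begin{equation*}
\sigma_i^t \cdot i^{-2\beta} \asymp \lambda_i^{[t(1+2\eps)+2\beta]/(1+2\eps+2p)} = \lambda_i^{2\bar\tau_{\rm b}},
\end{equation*}
so the bound reduces to the single spectral supremum
\begin{equation*}
\|\bar I_n^{\bias}\|_t^2 \lesssim \frac{\alpha^2}{n^2}\Bigl[\sup_{\lambda\in[0,\|B^\ast B\|]}\lambda^{2\bar\tau_{\rm b}} q_{n,\alpha}(\lambda)^2\Bigr]\sum_i i^{2\beta}|v_{0,i}|^2.
\end{equation*}

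Finally, I would control the spectral supremum with Lemma \ref{l:qbounds}. Since $\tau_{\rm b}\in[0,1]$, the lemma yields $\lambda^{\tau_{\rm b}} q_{n,\alpha}(\lambda)\leq (n/\alpha)^{1-\tau_{\rm b}}$; since $\bar\tau_{\rm b}\geq\tau_{\rm b}$, the residual factor $\lambda^{2(\bar\tau_{\rm b}-\tau_{\rm b})}$ is bounded on the compact spectrum by a constant depending only on $\|B^\ast B\|$. Multiplying gives $\lambda^{2\bar\tau_{\rm b}} q_{n,\alpha}(\lambda)^2 \lesssim (n/\alpha)^{2-2\tau_{\rm b}}$, which combined with the prefactor $\alpha^2/n^2$ delivers the claimed $(n/\alpha)^{-2\tau_{\rm b}}$.

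The main obstacle is the exponent bookkeeping: forcing the combined weight $\sigma_i^t \cdot i^{-2\beta}$ to match $\lambda_i^{2\bar\tau_{\rm b}}$ exactly as defined in \eqref{e:taucondb}, so that the Sobolev hypothesis \eqref{e:sobolev2} can be factored out cleanly. A secondary subtlety is that $\bar\tau_{\rm b}$ may exceed one, which is why Lemma \ref{l:qbounds} must be applied at the admissible exponent $\tau_{\rm b}\in[0,1]$ rather than at $\bar\tau_{\rm b}$ directly, with the leftover $\lambda^{2(\bar\tau_{\rm b}-\tau_{\rm b})}$ absorbed into the implicit constant via boundedness of the spectrum.
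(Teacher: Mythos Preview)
Your proposal is correct and follows essentially the same route as the paper: expand $\bar I_n^{\bias}$ in the joint eigenbasis to get $\frac{\alpha^2}{n^2}\sum_i\sigma_i^t q_{n,\alpha}(\sigma_i a_i^2)^2|v_{0,i}|^2$, convert the weight $\sigma_i^t i^{-2\beta}$ into a power of $\lambda_i=\sigma_i a_i^2$ via the relation $\sigma_i\asymp\lambda_i^\omega$, apply Lemma~\ref{l:qbounds} at the admissible exponent $\tau_{\rm b}\in[0,1]$, and absorb the leftover nonnegative power of $\lambda_i$ (equivalently, the nonpositive power of $i$ guaranteed by \eqref{e:taucondb}) into the constant so that only the summable $\sum_i i^{2\beta}|v_{0,i}|^2$ remains. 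The paper performs these same steps in a slightly different order---bounding $(\lambda_i^{\tau_{\rm b}}q_{n,\alpha}(\lambda_i))^2$ first and then showing $\sum_i \lambda_i^{t\omega-2\tau_{\rm b}}|v_{0,i}|^2<\infty$---but the argument is the same.
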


\begin{proof}
We start with equation \eqref{e:bias2} and then use \eqref{e:omega2} and Lemma \ref{l:qbounds},
\begin{equation*}
\begin{split}
    \norm{\bar{I}_n^{\bias}}_t^2  &= \sum_{i=1}^\infty \inner{\frac{\alpha}{n} \Sigma^{\frac{t+1}{2}} q_{n,\alpha}(B^\ast B) \Sigma^{-\frac{1}{2}} v_0}{\varphi_i}^2\\
    &=\sum_{i=1}^\infty \abs{\frac{\alpha}{n} \sigma_i^{\frac{t}{2}}q_{n,\alpha}(\sigma_i a_i^2)}^2 \abs{v_{0,i}}^2=\paren{\frac{\alpha}{n}}^{2}\sum_{i=1}^\infty \sigma_i^t q_{n,\alpha}(\sigma_i a_i^2)^2\abs{v_{0,i}}^2\\
  & \asymp\paren{\frac{n}{\alpha}}^{-2}\sum_{i=1}^\infty(\sigma_i a_i^2)^{t\omega- 2\tau_{\rm b} } ((\sigma_i a_i^2)^{\tau_{\rm b}}q_{n,\alpha}(\sigma_i a_i^2))^2\abs{v_{0,i}}^2\\
  &\lesssim \paren{\frac{n}{\alpha}}^{-2} \paren{\frac{n}{\alpha}}^{2 - 2\tau_{\rm b}}\sum_{i=1}^\infty(\sigma_i a_i^2)^{t\omega- 2\tau_{\rm b} } \abs{v_{0,i}}^2
\end{split}
\end{equation*}
Using \eqref{e:taucondb}, 
\begin{equation*}
    \begin{split}
        \sum_{i=1}^\infty(\sigma_i a_i^2)^{t\omega- 2\tau_{\rm b} } \abs{v_{0,i}}^2&\asymp\sum_{i=1}^\infty i^{-(1+2\eps + 2p) (t\omega- i\tau_{\rm b}) } \abs{v_{0,i}}^2\\
        &\asymp\sum_{i=1}^\infty i^{-(1+2\eps + 2p) (t\omega- 2\tau_{\rm b}) - 2\beta} i^{2\beta}\abs{v_{0,i}}^2\\
        &\lesssim\paren{\sup_{i} i^{-(1+2\eps + 2p) (t\omega- 2\tau_{\rm b}) - 2\beta}} \sum_{i=1}^\infty i^{2\beta}\abs{v_{0,i}}^2< \infty
    \end{split}
\end{equation*}
we have the result.

\end{proof}
Comparing this to the general case, we again see that if the data is sufficiently smooth and/or we study the probelm in a sufficiently smooth space ($\beta$ and/or $t$ large), we can again obtain $\bigo(n^{-2})$ convergence of the squared bias.

\begin{prop} 
\label{p:diagvar}
Under Assumption \ref{a2p}, having fixed $t$, for $\tau_{\rm v}\in [0,1]$ satisfying \eqref{e:taucondv},
\begin{equation*}
    \E\bracket{\norm{\bar{I}_n^{\var}}_t^2}\lesssim  \frac{\gamma^2 }{\alpha} \paren{\frac{n}{\alpha}}^{-\tau_{\rm v}}
\end{equation*}

\end{prop}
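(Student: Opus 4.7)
The plan is to mirror the structure of the proof of Proposition \ref{p:diagbias}, but working on the variance expression \eqref{e:var2} instead. First I would use the independence of the $\eta_j$ together with $\eta_j \sim N(0,\gamma^2 I)$ to write
\[
\E\bracket{\norm{\bar I_n^{\var}}_t^2} = \frac{\gamma^2}{n^2}\sum_{j=1}^n \Tr\paren{\Sigma^{\frac{t+1}{2}}q_{j,\alpha}(B^\ast B) B^\ast B\, q_{j,\alpha}(B^\ast B)\Sigma^{\frac{t+1}{2}}}.
\]
Since Assumption \ref{a2p} gives $\Sigma$ and $A^\ast A$ simultaneously diagonalized in the basis $\{\varphi_i\}$, so are $\Sigma$ and $B^\ast B$, with $B^\ast B \varphi_i = \lambda_i \varphi_i$ where $\lambda_i := \sigma_i a_i^2$. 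Writing the trace in this eigenbasis collapses it to
\[
\E\bracket{\norm{\bar I_n^{\var}}_t^2} = \frac{\gamma^2}{n^2}\sum_{j=1}^n\sum_{i=1}^\infty \sigma_i^{t+1}\lambda_i\, q_{j,\alpha}(\lambda_i)^2.
\]

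Next I would exploit \eqref{e:omega2}, $\sigma_i \asymp \lambda_i^\omega$, to convert everything to powers of $\lambda_i$: $\sigma_i^{t+1}\lambda_i \asymp \lambda_i^{(t+1)\omega + 1}$. Then, to insert the regularization parameter, I would algebraically split the exponent as
\[
\lambda_i^{(t+1)\omega + 1}q_{j,\alpha}(\lambda_i)^2 = \lambda_i^{(t+1)\omega - \tau_{\rm v}}\bracket{\lambda_i^{\frac{1+\tau_{\rm v}}{2}}q_{j,\alpha}(\lambda_i)}^2,
\]
so that Lemma \ref{l:qbounds} applied to the bracketed factor (which is admissible since $(1+\tau_{\rm v})/2 \in [0,1]$) yields the bound $\lesssim (j/\alpha)^{1-\tau_{\rm v}}$.

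The key step is then verifying that $\sum_i \lambda_i^{(t+1)\omega - \tau_{\rm v}}$ converges, which is where condition \eqref{e:taucondv} enters and, I expect, is the main obstacle. Using $\lambda_i \asymp i^{-(1+2\eps+2p)}$, the tail of this series behaves like $i^{-[(t+1)(1+2\eps) - \tau_{\rm v}(1+2\eps+2p)]}$, and a direct calculation shows the required exponent exceeds $1$ precisely when
\[
\tau_{\rm v} < \frac{t(1+2\eps) + 2\eps}{1+2\eps+2p} = \bar\tau_{\rm v},
\]
i.e.\ under \eqref{e:taucondv}. The strict inequality is crucial here, since equality would only give marginal divergence.

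Finally, combining these estimates,
\[
\E\bracket{\norm{\bar I_n^{\var}}_t^2} \lesssim \frac{\gamma^2}{n^2}\sum_{j=1}^n \paren{\frac{j}{\alpha}}^{1-\tau_{\rm v}} \lesssim \frac{\gamma^2}{n^2}\cdot n\cdot \paren{\frac{n}{\alpha}}^{1-\tau_{\rm v}} = \frac{\gamma^2}{\alpha}\paren{\frac{n}{\alpha}}^{-\tau_{\rm v}},
\]
where the $j$-sum is bounded by its largest term times $n$ (valid uniformly for $\tau_{\rm v} \in [0,1]$). This matches the claimed rate and completes the proof.
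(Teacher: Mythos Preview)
Your proposal is correct and follows essentially the same approach as the paper's proof: expand the variance via the trace identity, diagonalize in the $\{\varphi_i\}$ basis, use $\sigma_i \asymp \lambda_i^{\omega}$ to rewrite everything in powers of $\lambda_i = \sigma_i a_i^2$, peel off a factor $\lambda_i^{(1+\tau_{\rm v})/2}q_{j,\alpha}(\lambda_i)$ to apply Lemma~\ref{l:qbounds}, verify summability of $\sum_i \lambda_i^{(t+1)\omega - \tau_{\rm v}}$ under \eqref{e:taucondv}, and then sum over $j$. The only cosmetic difference is that the paper first writes the trace with index $n-j+1$ (as in \eqref{e:var2}) and then reindexes to $j$, whereas you absorb that relabeling into your first display.
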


\begin{proof}
Using \eqref{e:var2}, we begin by writing
\begin{equation*}
    \begin{split}
        \E\bracket{\norm{\bar{I}_n^{\var}}_t^2} & = \frac{1}{n^2}\sum_{j=1}^n \E\bracket{\norm{\Sigma^{\frac{t+1}{2}} q_{n-j+1,\alpha}(B^\ast B)B^\ast \eta_j}^2},\\
        & = \frac{\gamma^2}{n^2}\sum_{j=1}^n \Tr\paren{\Sigma^{\frac{t+1}{2}} q_{n-j+1,\alpha}B^\ast B(B^\ast B) q_{n-j+1,\alpha}(B^\ast B)\Sigma^{\frac{t+1}{2}}},\\
        & =  \frac{\gamma^2}{n^2}\sum_{j=1}^n \Tr\paren{\Sigma^{t+1} B^\ast B q_{j,\alpha}(B^\ast B)^2}.
    \end{split}
\end{equation*}
Using \eqref{e:qdef} on each term in the sum,
\begin{equation*}
    \begin{split}
        \Tr\paren{\Sigma^{t+1} B^\ast B q_{j,\alpha}(B^\ast B)^2} & =\sum_{i=1}^\infty \sigma_i^{t+2}a_i^2 q_{j,\alpha}(\sigma_i a_i^2)^2.
    \end{split}
\end{equation*}
Then, using \eqref{e:omega2} and Lemma \ref{l:qbounds}
\begin{equation*}
\begin{split}
    \sigma_i^{t+2}a_i^2 q_{j,\alpha}(\sigma_i a_i^2)^2 &\asymp \sigma_i^{t+1} ((\sigma_i a_i^2)^{\frac{1}{2}} q_{j,\alpha}(\sigma_i a_i^2))^2\\
    &\asymp (\sigma_i a_i^2)^{\omega(t+1)}((\sigma_i a_i^2)^{\frac{1}{2}} q_{j,\alpha}(\sigma_i a_i^2))^2\\
    &\asymp (\sigma_i a_i^2)^{\omega(t+1)-\tau_{\rm v} }((\sigma_i a_i^2)^{(1+\tau_{\rm v})/2} q_{j,\alpha}(\sigma_i a_i^2))^2\\
    &\lesssim (\sigma_i a_i^2)^{\omega(t+1)-\tau_{\rm v} } \paren{\frac{j}{\alpha}}^{1-\tau_{\rm v}}
\end{split}
\end{equation*}
Under assumption \ref{e:taucondv}
\begin{equation}
\label{e:diagvarconst1}
    \sum_{i=1}^\infty (\sigma_i a_i^2)^{\omega(t+1)-\tau_{\rm v} } \asymp\sum_{i=1}^\infty i^{-[(1+2\eps)(t+1) - \tau_{\rm v}(1 + 2\eps + 2p)]}<\infty
\end{equation}
Consequently,
\begin{equation*}
    \Tr\paren{\Sigma^{t+1} (B^\ast B) q_{j,\alpha}(B^\ast B)^2} \lesssim \paren{\frac{j}{\alpha}}^{1-\tau_{\rm v}},
\end{equation*}
and
\begin{equation*}
     \frac{\gamma^2}{n^2}\sum_{j=1}^n \Tr\paren{\Sigma^{t+1} (B^\ast B) q_{j,\alpha}(B^\ast B)^2}\lesssim \frac{\gamma^2}{\alpha} \paren{\frac{n}{\alpha}}^{-\tau_{\rm v}}
\end{equation*}

\end{proof}
In contrast to the non-diagonal case, if the problem is studied in a sufficiently weak sense (large enough $t$), one obtains $\bigo(n^{-1})$ convergence of the variance.
\begin{proof}[Proof of Theorem \ref{t:msediagonal1}]
This result immediately follows from the previous two propositions.

\end{proof}

\subsection{Minimax Analysis}
\label{s:minimax}

\begin{proof}[Proof of Theorem \ref{t:batchminimax1}]
Recall  the spectral cutoff regularization 
\begin{equation*}
    g_\alpha(\lambda)=\lambda^{-1}1_{[\alpha,\infty)}(\lambda).
\end{equation*}
For a fixed $\alpha$, the regularized solution of \eqref{e:batch} is
\begin{equation*}
    \bar{u}_{n,\alpha} = g_{\alpha}(A^\ast A)A^\ast \bar{y}_n = g_{\alpha}(A^\ast A)A^\ast Au^\dagger + g_{\alpha}(A^\ast A)A^\ast \bar{\eta}_n.
\end{equation*}
This allows us to write the bias-variance decomposition of the error as
\begin{equation*}
    \E[\|\bar{u}_{n,\alpha} - u^\dagger\|_t^2] = \|(g_\alpha(A^\ast A)A^\ast A -I)u^\dagger\|_{t}^2 + \E[\|g_\alpha(A^\ast A) A^\ast \bar{\eta}_n\|_t^2]
\end{equation*}
For the bias term, 
\begin{equation*}
\begin{split}
 \|(g_{\alpha}(A^\ast A)A^\ast A -I)u^\dagger\|_t^2  & = \sum_{i=1}^\infty \sigma^t_i (g_\alpha(a_i^2)a_i^2 -1)^2 |u_i^\dagger|^2.
\end{split}
\end{equation*}
Since $a_i^2 \asymp i^{-2p}$, all terms with $i \lesssim \alpha^{-\frac{1}{2p}}$ will vanish.  This leaves us with 
\begin{equation*}
\begin{split}
    \|(g_{\alpha}(A^\ast A)A^\ast A -I)u^\dagger\|_t^2&\lesssim\sum_{i=\lfloor\alpha^{-\frac{1}{2p}}\rfloor}^\infty \sigma_i^t  |u_i^\dagger|^2 \\
    &\lesssim \sum_{i=\lfloor\alpha^{-\frac{1}{2p}}\rfloor}^\infty i^{-t(1+2\eps) - 2\beta} i^{2\beta} |u_i^\dagger|^2\\
    & \lesssim \alpha^{\frac{t(1+2\eps) + 2\beta}{2p}}\sum_i i^{2\beta} |u_i^\dagger|^2.
\end{split}
\end{equation*}
For the variance term, all terms with $i\gtrsim \alpha^{-\frac{1}{2p}}$ will vanish,
\begin{equation*}
    \begin{split}
        \E[\|g_{\alpha}(A^\ast A)A^\ast \eta\|_t^2]& = \frac{\gamma^2}{n}\sum_{i=1}^\infty \sigma_i^{t} g_\alpha(a_i^2)^2 a_i^2\\
        &\lesssim \frac{\gamma^2}{n}\sum_{i=1}^{\lceil\alpha^{-\frac{1}{2p}}\rceil} \sigma_i^{t} a_i^{-2}\lesssim\frac{\gamma^2}{n}\sum_{i=1}^{\lceil\alpha^{-\frac{1}{2p}}\rceil} i^{2p - t(1+2\eps)}\\
        &\lesssim \begin{cases}
         \frac{\gamma^2}{n}\alpha^{-\frac{1 + 2p - t(1+2\eps)}{2p}}& 2p-t(1+2\eps) \neq -1\\
         \frac{\gamma^2}{n} \log\frac{1}{\alpha}& 2p-t(1+2\eps) = -1.
        \end{cases}
    \end{split}
\end{equation*}
Combining the two terms, we thus have,
\begin{equation*}
    \E[\|u- u^\dagger\|_t^2]  \lesssim 
\begin{cases}
         \alpha^{\frac{t(1+2\eps)+2\beta}{2p}}+\frac{\gamma^2}{n}\alpha^{-\frac{1 + 2p - t(1+2\eps)}{2p}}& 2p-t(1+2\eps) \neq -1\\
         \alpha^{\frac{t(1+2\eps)+2\beta}{2p}}+\frac{\gamma^2}{n} \log\frac{1}{\alpha}& 2p-t(1+2\eps) = -1.
        \end{cases}
\end{equation*}
Optimizing over $\alpha$  yields the result.
\end{proof}

\begin{proof}[Proof of Corollary \ref{c:minimaxdiag1}]
Note that in the proof of Proposition \ref{p:diagvar}, under our assumptions, in \eqref{e:diagvarconst1}, with $\tau_{\rm v} = (1-\theta)\bar{\tau}_{\rm v}$
\begin{equation*}
\begin{split}
    \sum_{i=1}^\infty (\sigma_i a_i^2)^{\omega (t+1) - \tau_{\rm v}} &\asymp \sum_{i=1}^\infty i^{-[(1+2\eps)(t+1) - t(1+2\eps)(1-\theta) - 2\eps(1-\theta)]}\\
    &\asymp\sum_{i=1}^\infty i^{-1 - \theta[t(1+2\eps) + 2\eps]}\\
    &\lesssim\int_1^\infty x^{-1 - \theta[t(1+2\eps) + 2\eps]}dx = \frac{1}{\theta[t(1+2\eps) + 2\eps]}
\end{split}
\end{equation*}
Consequently,
\begin{equation*}
\E[\| \bar{u}_n - \ut\|_t^2]\lesssim \paren{\frac{n}{\alpha}}^{-\frac{t(1+2\eps) + 2\beta}{1 + 2\eps + 2p}} + \frac{1}{\theta} \frac{1}{\alpha}  \paren{\frac{n}{\alpha}}^{-(1-\theta)\frac{t(1+2\eps) + 2\eps}{1 + 2\eps + 2p}}
\end{equation*}
where the implicit constants in each term are independent of $\alpha$, $\theta$, and $n$.  The optimally scaled $\alpha$ will be
\begin{equation*}
    \alpha = \theta^{-\frac{1+2p + 2\eps}{1 + 2p + 2\beta + \theta[t(1+2\eps) + 2\eps]}} n^{\frac{2 \beta-2\eps + \theta[t(1+2\eps) + 2\eps]}{1 + 2p + 2\beta + \theta[t(1+2\eps) + 2\eps]}}
\end{equation*}
Substituting back in, we have our result.
\end{proof}

\section{Numerical Experiments}
\label{s:numerics}

In this section we illustrate our results with some numerical experiments.

\subsection{Scalar Examples}

As a simple scalar example, let $A = 1$, $\gamma=0.1$, and $u^\dagger=0.5$.  For 3DVAR, take $u_0 = 0$, $\Sigma = 1$, and $\alpha=1$, while for Kalman, take $m_0 =0$ and $C_0 = 1$.  Running $10^2$ independent trials of each algorithm for $10^4$ iterations, we obtain the results in Figure \ref{fig:scalar}.  These simulations demonstrate our predictions from Theorems \ref{t:3dvarnoconverge}, Theorem \ref{scalarbound}, and Theorem \ref{t:kalmaniter}, that 3DVAR can only converge with time averaging, while Kalman will not be improved by time averaging.  The confidence bounds are computed using $10^4$ bootstrap samples to produce 95\% confidence intervals.

\begin{figure}
    \centering
    \subfigure[]{\includegraphics[width=6.25cm]{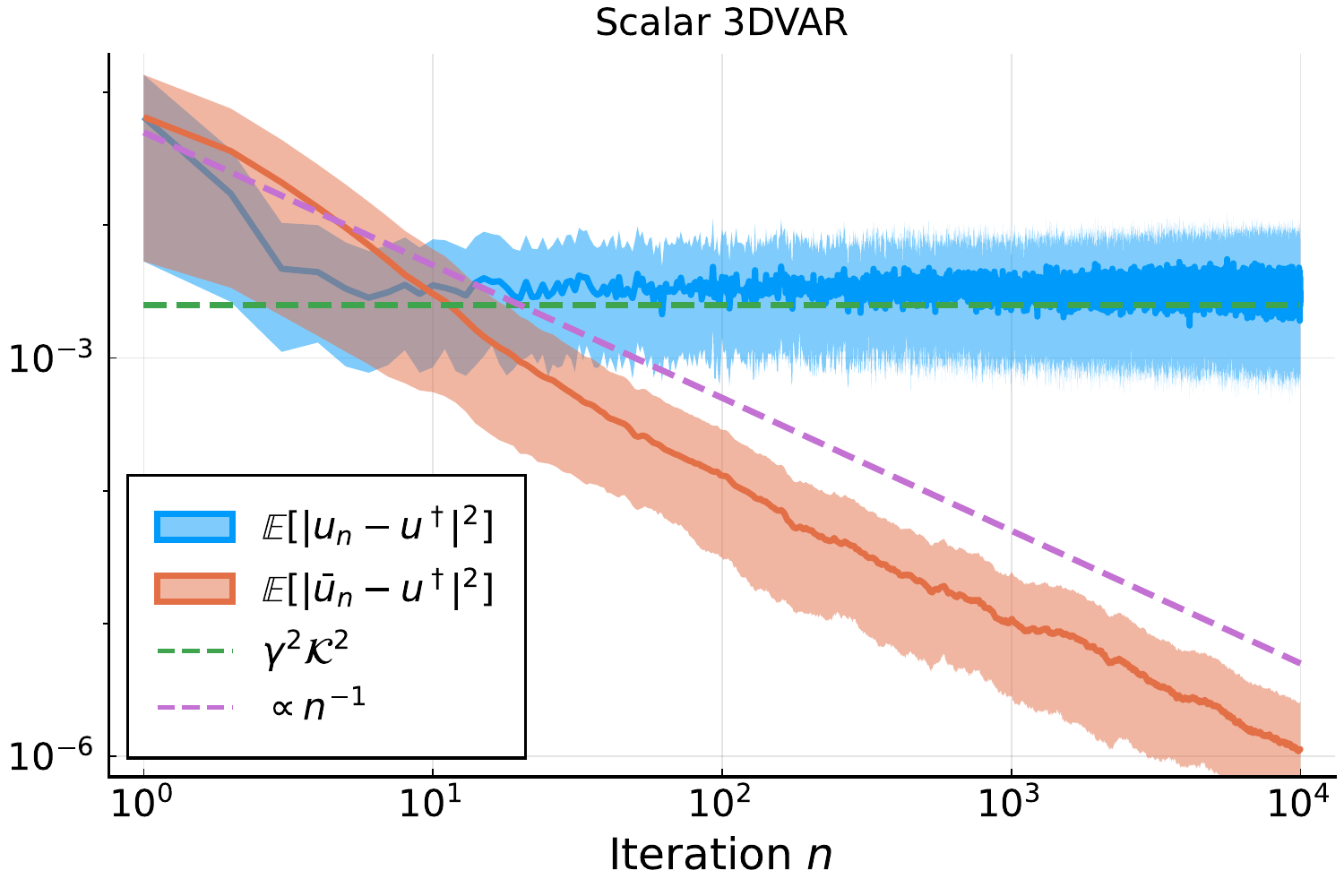}}
    \subfigure[]{\includegraphics[width=6.25cm]{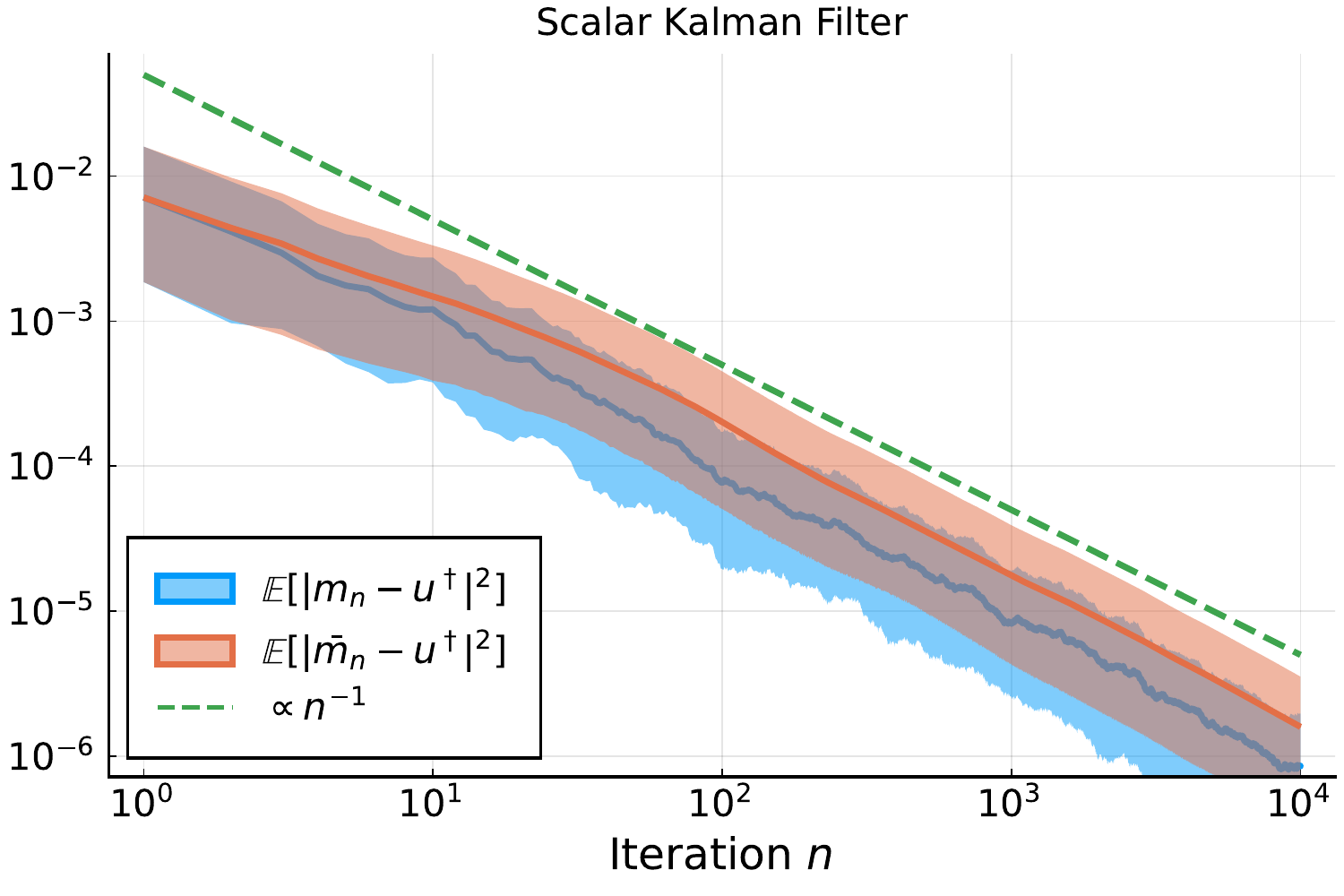}}
    \caption{Scalar results for 3DVAR and the Kalman filter.  These results are consistent with Theorems \ref{t:3dvarnoconverge},\ref{t:kalmaniter}, and \ref{scalarbound}; 3DVAR will not converge without time averaging while Kalman will not improve from time averaging.  Shaded regions reflect 95\% confidence intervals at each $n$.}
    \label{fig:scalar}
\end{figure}

\subsection{Simultaneous Diagonalization Example}
Next, we consdier the case of simultaneous diagonalization, working with functions in $L^2(0,2\pi;\R)$, and
\begin{equation}
\label{e:diagonalprob1}
    A = (I -\tfrac{d^2}{dx^2})^{-1}, \quad \Sigma  = A^2, \quad u^\dagger= 0.
\end{equation}
The $A$ operator is equipped with periodic boundary conditions, allowing us to easily work in Fourier space.  As the problem is linear, we can separately consider the bias and the variance.  In all examples below we discretize on $N = 2^{12}$ modes,  and run for $10^4$ iterations.  This corresponds to $p =2$ and $\epsilon=1.5$ in Assumption \ref{a2p}.

For the bias, we choose, before truncation, as the initial condition 
\begin{equation}
    \label{e:biasic1}
    u_0 = \sum_{k=1}^{\infty} k^{-\frac{1}{2} - \beta - \delta} \cos(k x),
\end{equation}
with $\beta = 1$ and $\delta = 0.01$.  Consequently, this function satisfies \eqref{e:sobolev2} from Assumption \ref{a2p}.  The perturbation $\delta$ is introduced so that we can best see the sharpness of our rates.  Running the truncated and discretized problem, we obtain the results shown in Figure \ref{fig:biasdiag1} for the norms $t=0,0.5, 1, 2$.  As the plots show, we are in good agreement with the maximal rate predicted by Theorem \ref{t:msediagonal1}.

For the variance, taking $u_0 = 0$, we run $10^2$ independent trials of the problem, and then use bootstrapping to estimate 95\% confidence intervals.  The results, shown in Figure \ref{fig:vardiag1}, again show good agreement with the maximal rate predicted by Theorem \ref{t:msediagonal1}.

\begin{figure}
    \centering
    \subfigure[]{\includegraphics[width=6.25cm]{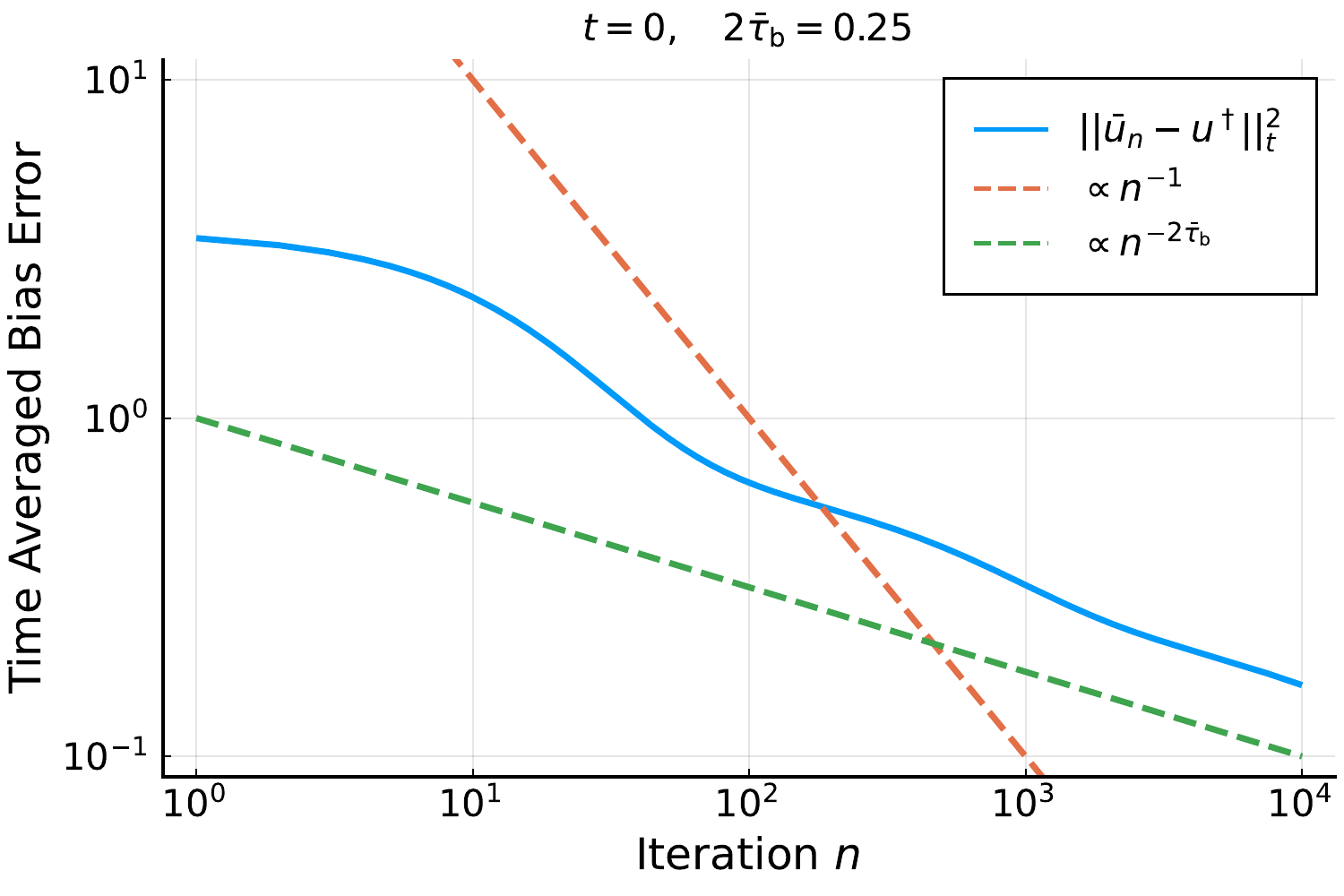}}
        \subfigure[]{\includegraphics[width=6.25cm]{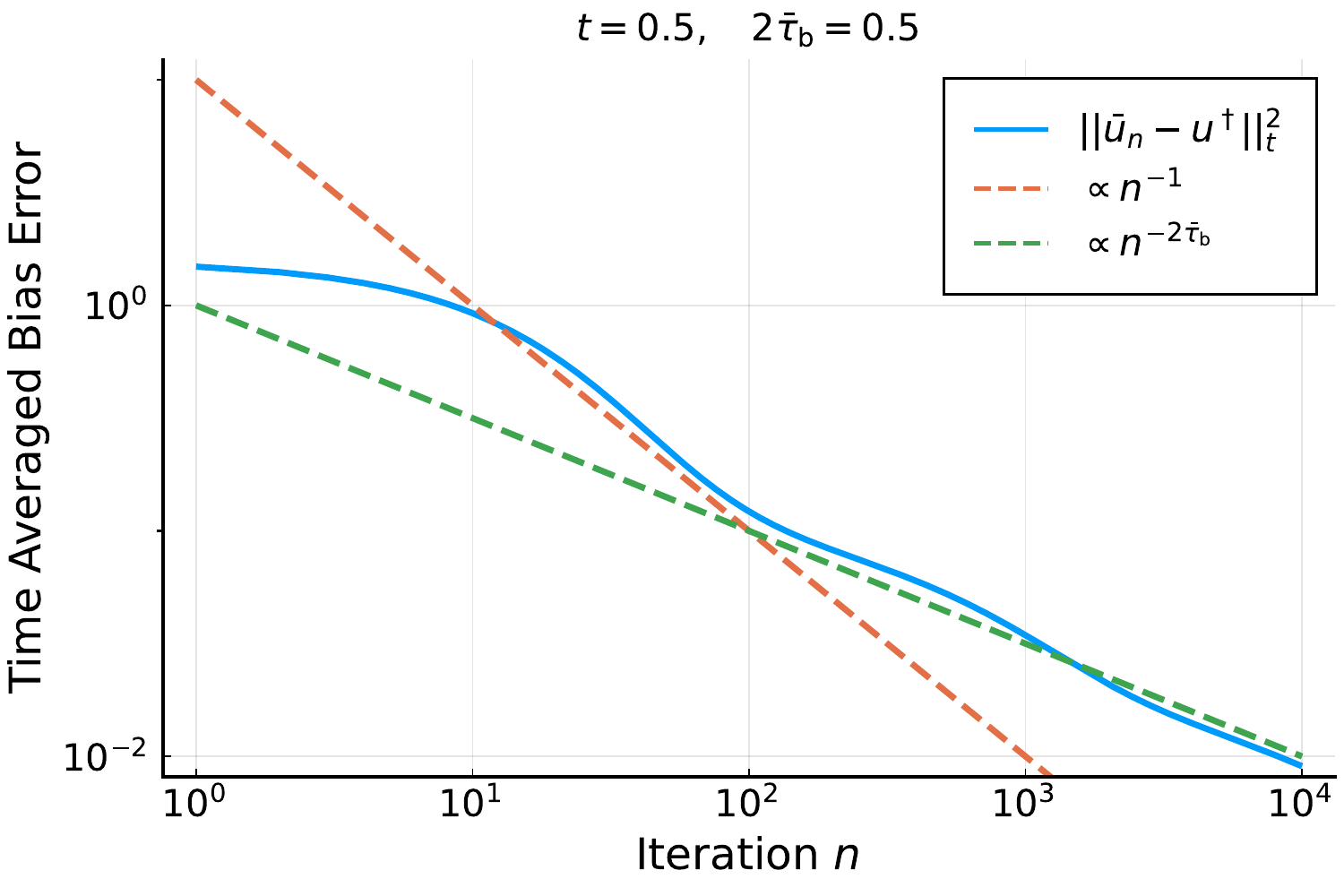}}
        
    \subfigure[]{\includegraphics[width=6.25cm]{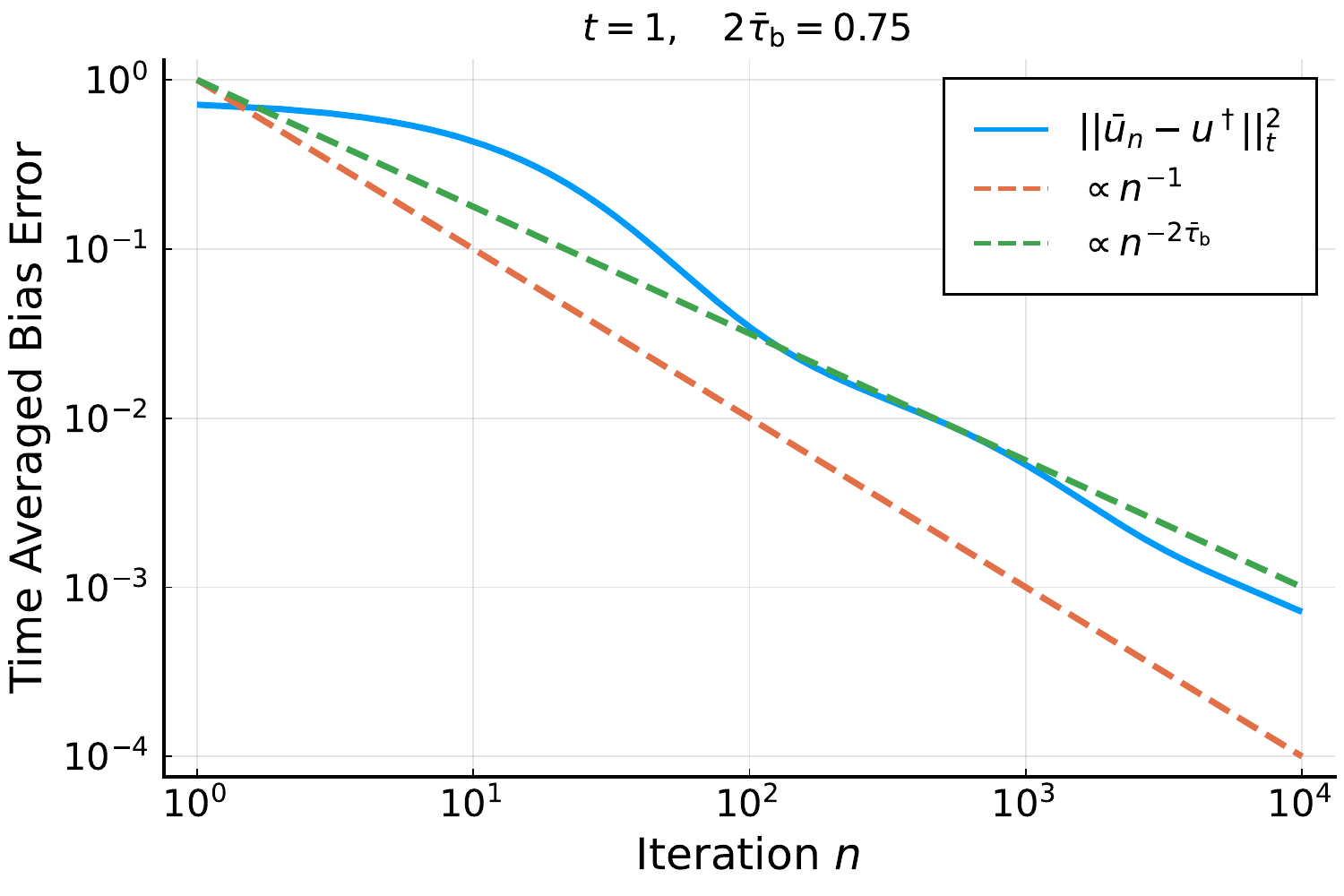}}
        \subfigure[]{\includegraphics[width=6.25cm]{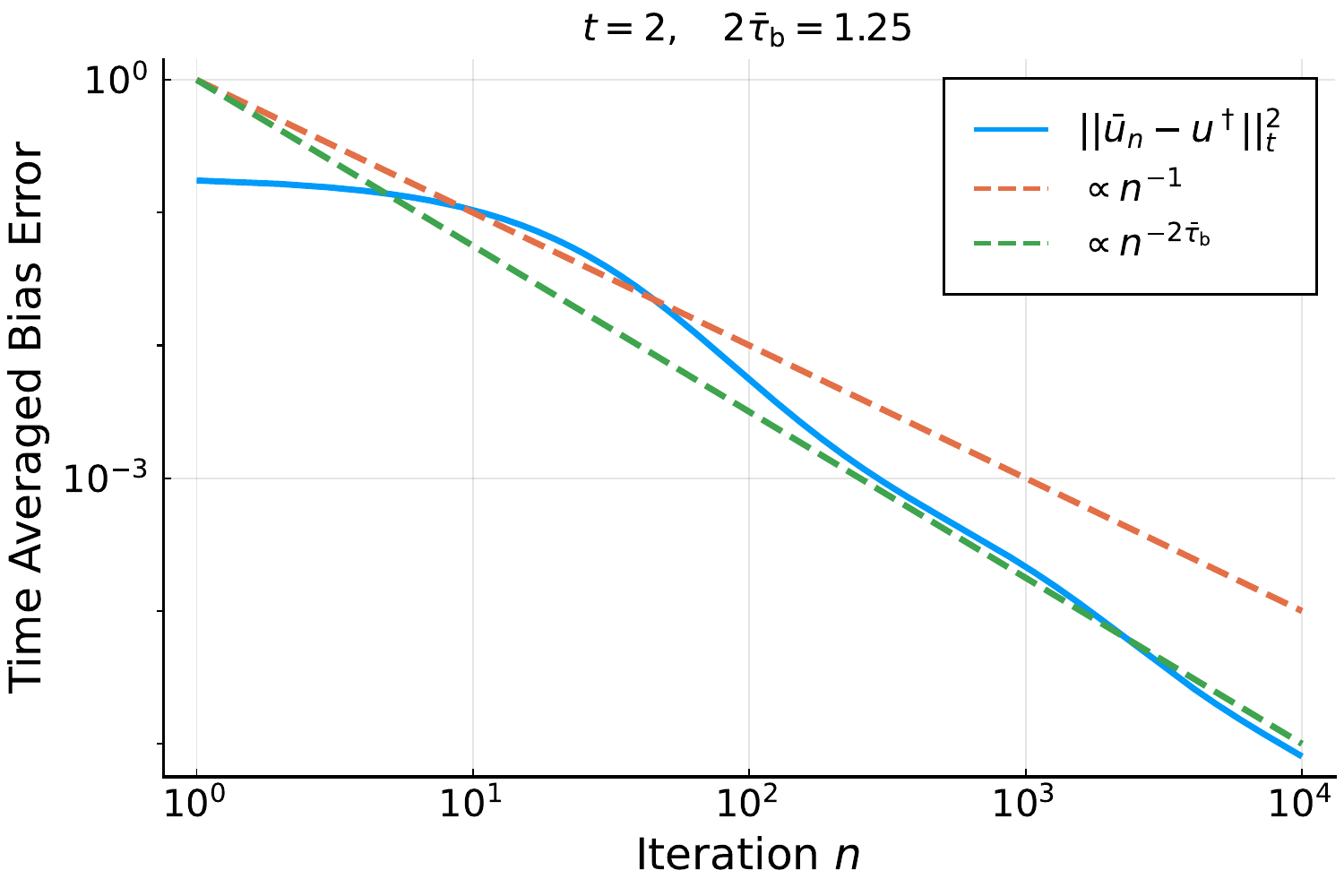}}
    
    \caption{Decay of the squared bias in our simultaneously diagonalized test problem for different $t$-norms.  All are in good agreement with the rates predicted by Theorem  \ref{t:msediagonal1}.  The constant $\bar{\tau}_{\rm v}$ reflects the greatest possible decay rate from \eqref{e:taucondb}.}
    \label{fig:biasdiag1}
\end{figure}

\begin{figure}
    \centering
    \subfigure[]{\includegraphics[width=6.25cm]{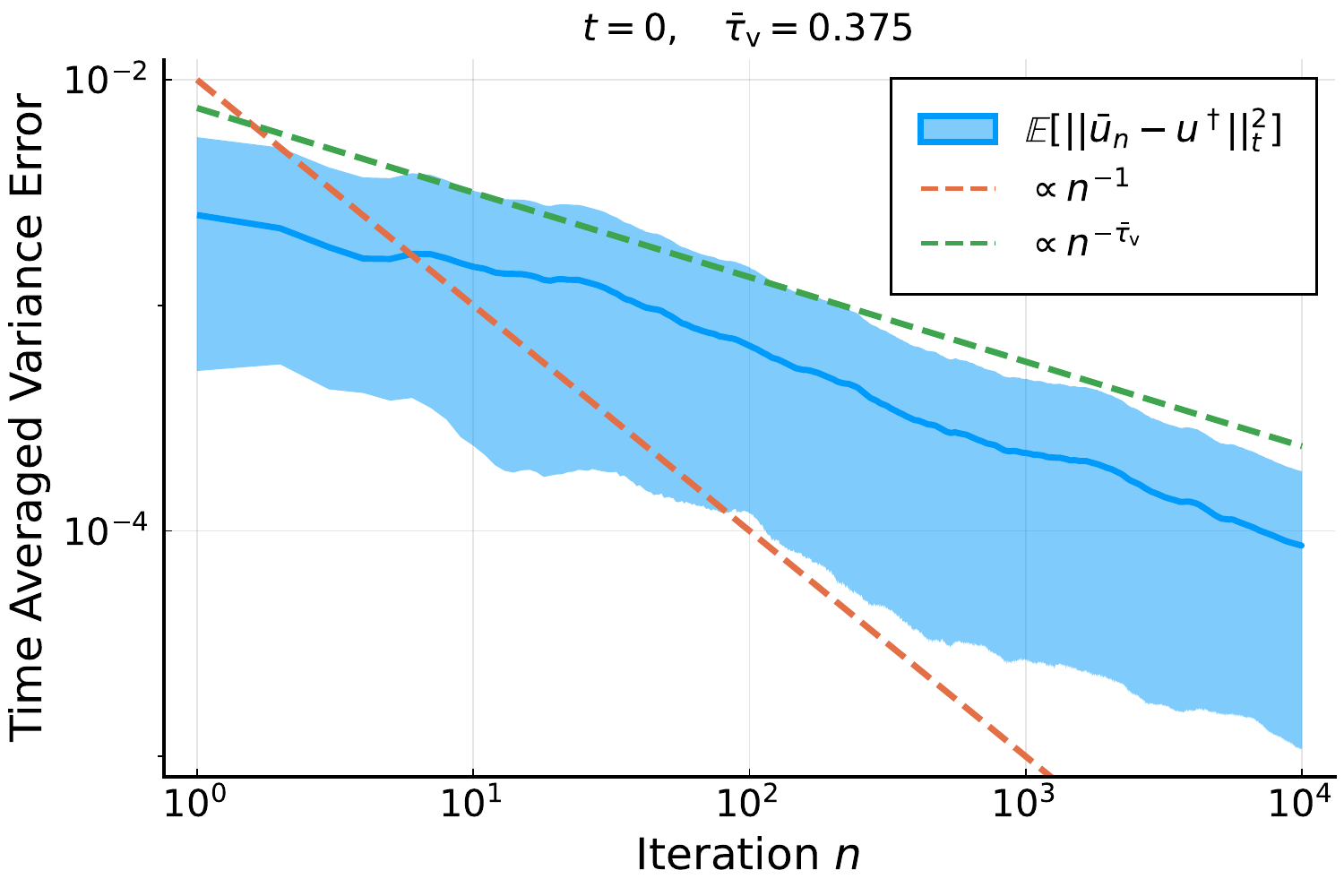}}
    \subfigure[]{\includegraphics[width=6.25cm]{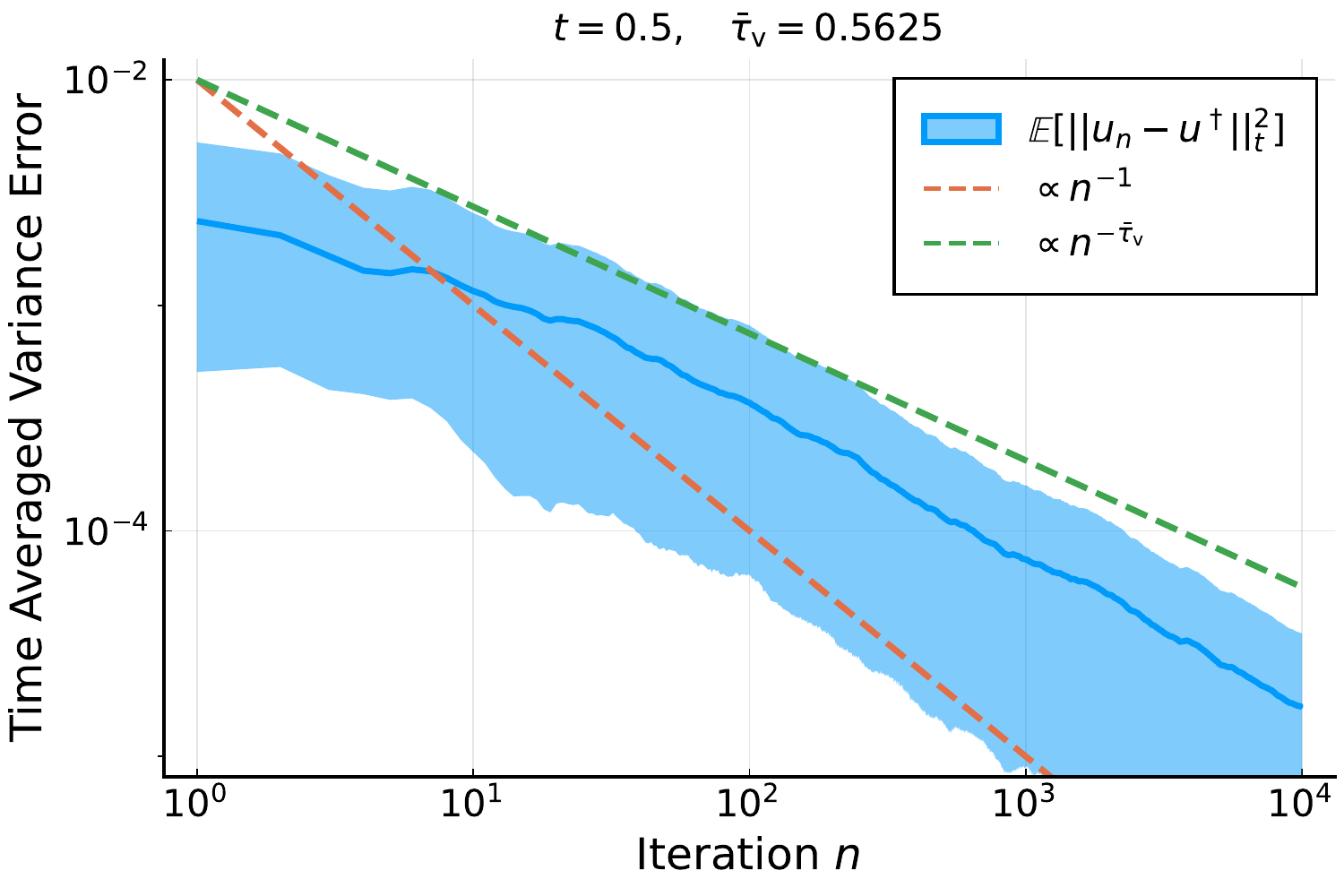}}    
    
    \subfigure[]{\includegraphics[width=6.25cm]{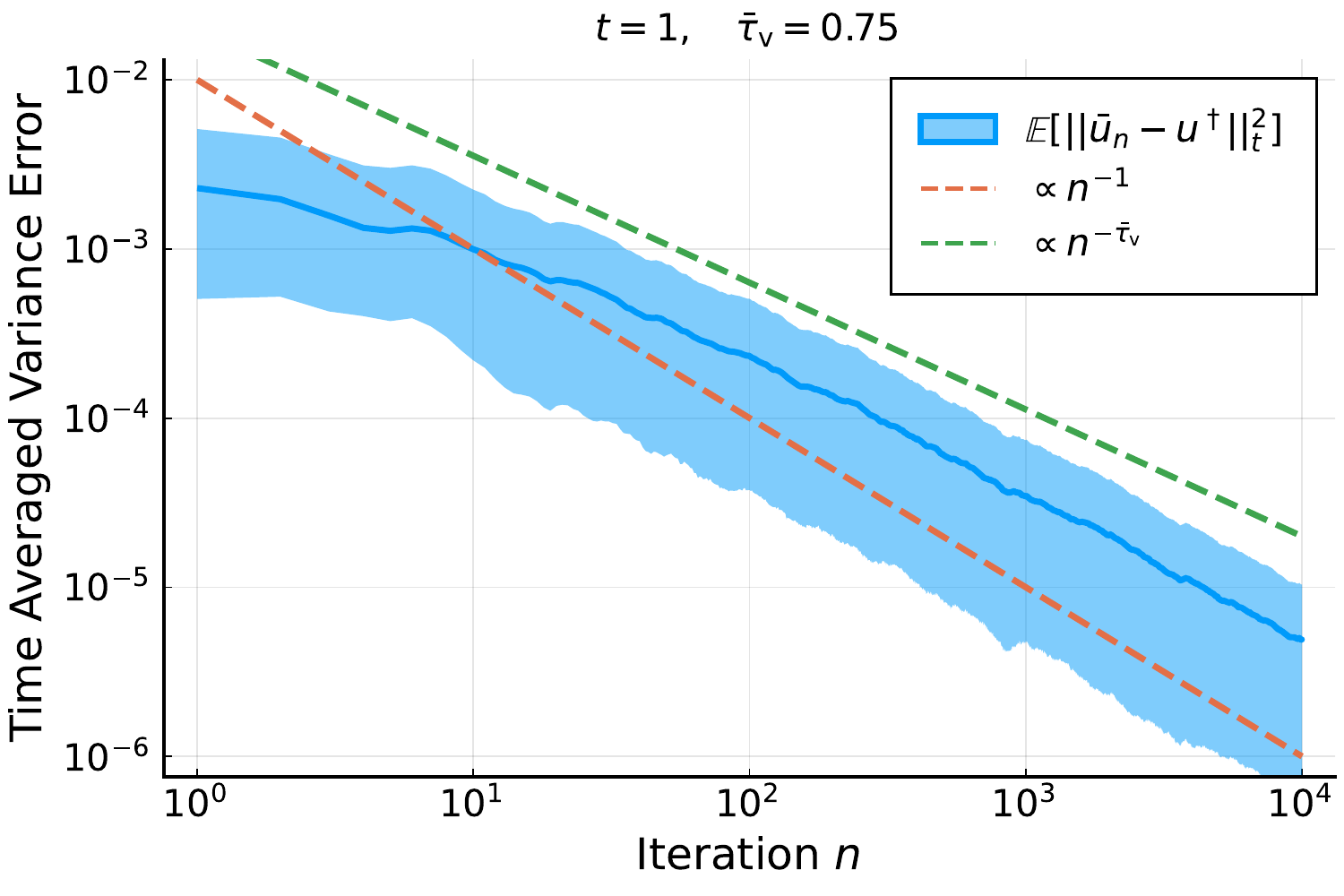}}
    \subfigure[]{\includegraphics[width=6.25cm]{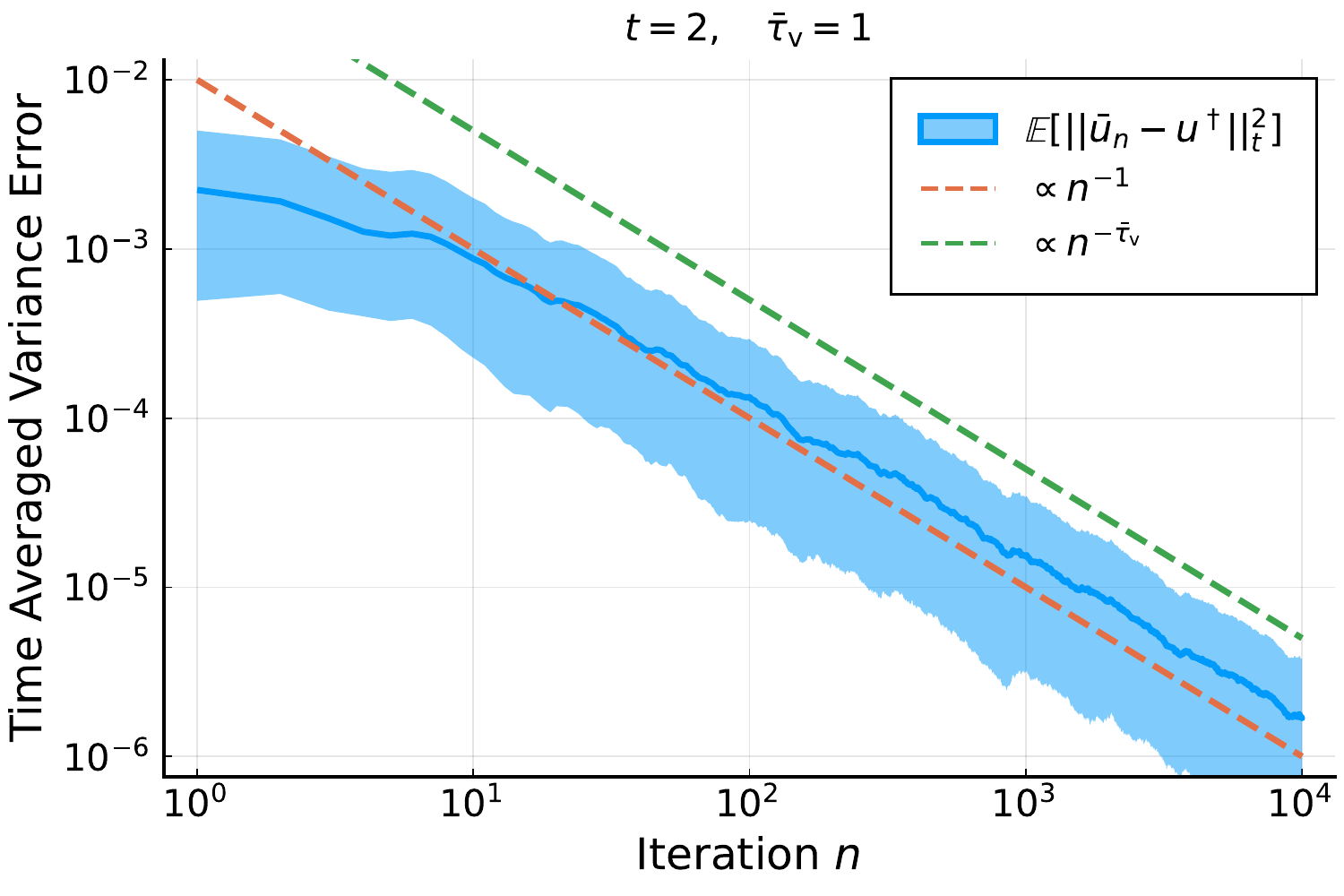}}
    \caption{Decay of the mean squared variance term in our simultaneously diagonalized test problem for different $t$-norms.  All are in good agreement with the rates predicted by Theorem  \ref{t:msediagonal1}.  Shaded regions reflect 95\% confidence intervals at each $n$.  The constant $\bar{\tau}_{\rm v}$ reflects the greatest possible decay rate from \eqref{e:taucondv}.}
    \label{fig:vardiag1}
\end{figure}

\section{Discussion}
\label{s:disc}

In this work we have examined the impact of iterate averaging upon the Kalman filter and 3DVAR as tools for solving a statistical inverse problem.  We have found that this modest post-processing step ensures that the simpler algorithm, 3DVAR, will converge, unconditionally with respect to $\alpha$, in mean square as the number of iterations $n \to\infty$.  In contrast, there is no performance gain when this averaging is applied to the Kalman filter.

Our simulations suggest that our rates, at least in the diagonal case, may be sharp.  For the diagonal case, we should expect to see something slower than the Monte Carlo rate of convergence, $\bigo(n^{-1})$ unless working in a sufficiently weak space (large $t$).  In the general case, it would seem that for the infinite dimensional problem, we will never be able to achieve $\bigo(n^{-1})$ convergence for the reasons outlined in Remark \ref{r:generalvar};  the operator $\Sigma^{t-\nu}$ would need to be trace class, but $t\leq \nu$ for the bias to converge.  The sharpness of the result in the non-diagonalizable case remains to be established.  There is also potential for the extension of this work to the analogous continuous in time problem \eqref{e:continv} studied in \cite{lu2021asymptotical}.

In actual applications, the problem will always be finite dimensional, making $\bigo(n^{-1})$ achievable.  In a spectral Galerkin formulation, truncating to $N$ modes, and, $\Tr\Sigma^{t-\nu}_N$, will always be finite, though the constant may be large.   Hence, we should expect to see  $\bigo(n^{-1})$ convergence, for sufficiently large $n$ and a sufficiently severe dimensional truncation.

\section*{Acknowledgements} The authors thank A.M. Stuart for suggesting an investigation of this problem.  This work was supported by US National Science Foundation Grant DMS-1818716.  The content of this work originally appeared in \cite{jones2020high} as a part of F.G. Jones's PhD dissertation.  Work reported here was run on hardware supported by Drexel's University Research Computing Facility.

\bibliographystyle{abbrv}
\bibliography{refs}

\end{document}